\newenvironment{mathlist}
{\begin{enumerate}[label={\upshape(\roman*)}, align=left, widest=iii, leftmargin=*]}
{\end{enumerate}\ignorespacesafterend}
\theoremstyle:=definition,remark,plain\do{%
  \expandafter\g@addto@macro\csname th@\theoremstyle\endcsname{%
  \addtolength\thm@preskip\parskip }%
 }
\newtheorem*{rep@theorem}{\rep@title}
\newcommand{\newreptheorem}[2]{%
\newenvironment{rep#1}[1]{%
 \def\rep@title{#2 \ref*{##1}}%
 \begin{rep@theorem}}%
 {\end{rep@theorem}}}
\newtheorem{theorem}{Theorem}[section]
\newtheorem{proposition}[theorem]{Proposition}
\newtheorem{lemma}[theorem]{Lemma}
\theoremstyle{definition}
\newenvironment{remark}
  {\pushQED{\qed}\remarkX}
  {\popQED\enddefinitionX}
\newcommand{\change}[1]{#1}
\newcommand{\MF}[1]{\mathscr{F}_{#1}}
\newcommand{\altoverline}[2]{\,\overline{\!#1}_{#2}}
\newcommand{\altunderline}[2]{\underline{#1\!}_{\:#2}}
\newcommand{\Tmax}{T^{\max}}
\newcommand{\Ti}{\mathscr{T}}
\newcommand{\FnUA}{\Restriction{F}{n - 1}}
\title{The Moran forest}
\begin{document}

\author[1,2]{François Bienvenu}
\author[3]{Jean-Jil Duchamps}
\author[1,2]{Félix Foutel-Rodier}

\affil[1]{\small Center for Interdisciplinary Research in Biology (CIRB),
CNRS UMR 7241,\newline Collège de France, PSL Research University, 
Paris, France}
\affil[2]{Laboratoire de Probabilités, Statistique et Modélisation (LPSM),
CNRS UMR 8001, Sorbonne Université, Paris, France}
\affil[3]{Laboratoire de mathématiques de Besançon (LmB) UMR 6623,
Université Bourgogne Franche-Comté, CNRS, F-25000 Besançon, France}

\maketitle

\begin{abstract}
Starting from any graph on $\{1, \ldots, n\}$, consider the Markov chain
where at each time-step a uniformly chosen vertex is disconnected from all of
its neighbors and reconnected to another uniformly chosen vertex. This Markov
chain has a stationary distribution whose support is the set of non-empty
forests on $\{1, \ldots, n\}$.
The random forest corresponding to this stationary distribution has interesting
connections with the uniform rooted labeled tree and the uniform attachment
tree. We fully characterize its degree distribution, the distribution of its
number of trees, and the limit distribution of the size of a tree sampled
uniformly.  We also show that the size of the largest tree is asymptotically
$\alpha \log n$, where $\alpha = (1 - \log(e - 1))^{-1} \approx 2.18$, and
that the degree of the most connected vertex is asymptotically
$\log n / \log\log n$.
\end{abstract}

\enlargethispage{-1ex}
\vspace{1ex}

\tableofcontents

\section{Introduction} \label{secIntro}


\subsection{The model} \label{secModel}

Consider a Markov chain on the space of directed graphs on $\Set{1, \ldots, n}$, for a fixed $n\geq 2$, whose
transition probabilities are defined as follows: at each time-step,
\begin{enumerate}
  \item Choose an ordered pair of distinct vertices $(u, v)$ uniformly at random.
  \item Disconnect $v$ from all of its neighbors, then add the edge $\vec{uv}$.
\end{enumerate}
Note that if $\vec{uv}$ is already the only edge attached to $v$ at time $t$, then
the graph is unchanged at time $t + 1$.  A simple example illustrating the
dynamics of this Markov chain is depicted in Figure~\ref{figExampleDynamics}.

\begin{figure}[h!]
  \centering
  \includegraphics[width=0.95\linewidth]{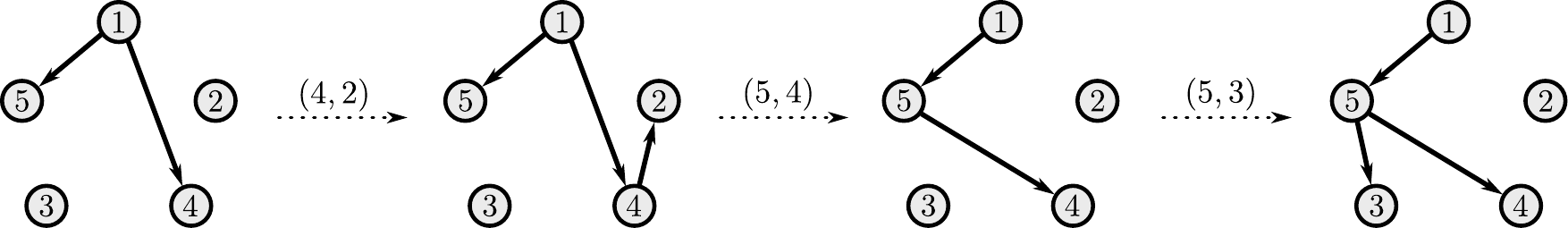}
  \caption{Example of four successive transitions of the Markov chain.
    Starting from the left-most graph, transitions are represented by dashed
    arrows decorated with the pair $(u,v)$ that is chosen uniformly at each
    step.} \label{figExampleDynamics}
\end{figure}

This Markov chain has a stationary distribution whose support is the set of
non-empty rooted forests on $\Set{1, \ldots, n}$.
\change{By \emph{rooted forest} we mean a disjoint union of rooted trees -- or,
equivalently, a directed graph where each vertex has at most one incoming edge;
any vertex $\rho$ with no incoming edge can then be seen as the root of a tree
consisting of all vertices accessible from $\rho$.}
To see why the stationary graph is a rooted forest, note that:
\begin{itemize}
  \item The graph cannot be empty because there is always an edge between
    the two vertices involved in the last transition.

  \pagebreak

  \item Starting from any graph, the chain will eventually reach a forest (for
    instance, the sequence of transitions $(1, 2), (1, 3), \ldots, (1, n)$ will
    at some point turn the graph into the star graph rooted on vertex~1).
  \item The chain cannot leave the set of forests because its
    transitions cannot make a vertex have two incoming edges.
  \item Any non-empty rooted forest is accessible from any other graph (if not clear,
    this will become apparent in Section~\ref{secConstructions}).
  \item The chain is aperiodic because it can stay in the same state.
\end{itemize}

\change{
The stationary distribution of this chain is the random forest model that
we study in this paper. We call it the \emph{Moran forest}, and use the notation
$\MF{n}$ to denote a random forest having this distribution. Our
interest in this object lies in its connection with the Moran model
of population genetics~\cite{Moran1958}. The Moran model describes the
dynamics of a population of constant size $n$ where, at each time step,
two distinct individuals are sampled uniformly at random, and the second
one is replaced by a copy of the first one. The Markov chain that we
consider thus corresponds to the family structure of extant individuals
in a Moran model. The Moran model is a central object in mathematical
population genetics~\cite{Durrett2008, Etheridge2011}, whose extensions
have been used in a variety of other contexts, including 
diversification~\cite{Hey1992, Morlon2010} and evolutionary game
theory~\cite{Nowak2006}.
}


\subsection{Main results} \label{secResults}

Our first result, which we detail in Section~\ref{secConstructions}, is that
there is a simple way to sample $\MF{n}$.
This construction enables us to study several of its
statistics, such as its number of trees (Section~\ref{secLawNumberTrees}),
its degree distribution (Section~\ref{secDegreeFixed}), and the typical size of
its trees (Section~\ref{secSizeUnifTree}). Some of these results
are presented in Table~\ref{tabStatistics}.

\begin{table}[h!]
\centering
\begin{tabular}{llll}
\toprule
  Notation & Variable &  Distribution  \\
\midrule
  $N_n$ & Number of trees & $\displaystyle \sum_{\ell = 1}^n I_\ell$,
    where $I_\ell \sim \mathrm{Ber}\mleft(\tfrac{\ell - 1}{n - 1}\mright)$ \\ \addlinespace 
  $D$ & Asymptotic degree & $\displaystyle \mathrm{Ber}(1 - U) + \mathrm{Poisson}(U)$, \\
      & distribution      & where $U \sim \mathrm{Unif}\mleft(\ClosedInterval{0, 1}\mright)$ \\ \addlinespace
  $T^U$ & Asymptotic size of & $\mathrm{Geometric}(e^{-X})$,  \\
        & a uniform tree     & where $X \sim 2xdx$ on $\ClosedInterval{0, 1}$ \\
\bottomrule
\end{tabular}
\caption{Some statistics of the Moran forest, for fixed $n$ in the case of
the number of trees, and as ${n \to \infty}$ for the degree and the size of a
uniform tree. Note that the degree also has a simple, explicit distribution
for fixed $n$ (see Proposition~\ref{propLawDegree}); \change{also, the
Bernoulli and the Poisson r.v.\ appearing in the sum correspond respectively
to the in- and out-degrees.} The Bernoulli variables $I_\ell$ used to
describe the distribution of $N_n$ are independent and,
conditional on $U$, so are the Bernoulli and Poisson variables used for the
distribution of~$D$.}
\label{tabStatistics}
\end{table}

In Section~\ref{secUnifLabeledTrees}, we show that the Moran forest is closely
linked to the uniform rooted labeled tree. Specifically, we prove the following
theorem.

\begin{theorem} \label{thmConstructionUniformTrees}
Let $\mathcal{T}$ be a uniform rooted tree on $\Set{1, \ldots, n - 1}$.
From this tree, build a forest $\mathcal{F}$ on $\Set{1, \ldots, n}$ according
to the following procedure:
\begin{enumerate}
  \item Remove all decreasing edges from $\mathcal{T}$ (that is, edges
    $\vec{uv}$ pointing away from the root such that $u > v$).
  \item Add a vertex labeled $n$ and connect it to a uniformly
    chosen vertex of~$\mathcal{T}$
  \item Relabel vertices according to a uniform permutation of
    $\Set{1, \ldots, n}$.
\end{enumerate}
Then, the resulting forest $\mathcal{F}$ has the law of the Moran forest $\MF{n}$.
\end{theorem}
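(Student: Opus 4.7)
The plan is to verify the theorem by matching $P(\mathcal{F} = f)$ against $P(\MF{n} = f)$ for an arbitrary labeled forest $f$ on $\{1, \ldots, n\}$, where the latter is given by the explicit sampling scheme for $\MF{n}$ established in Section~\ref{secConstructions}. Both distributions are invariant under relabelings---$\mathcal{F}$ by step~(3), and $\MF{n}$ because the Markov chain samples pairs $(u, v)$ uniformly---so matching probabilities on a single representative of each isomorphism class suffices.

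The first step will be to reformulate steps (1)--(2) of the construction. Since the edge $\vec{vn}$ added in step (2) is automatically increasing (as $v \leq n-1 < n$), it is never removed by the decreasing-edge deletion. Because the pair $(\mathcal{T}, v)$ is equivalent to a uniform rooted labeled tree $\mathcal{T}^+$ on $\{1, \ldots, n\}$ subject to the constraint that $n$ is a leaf (both have $(n-1)^{n-1}$ equally likely outcomes), the first three steps together amount to: draw such a $\mathcal{T}^+$, and remove all its decreasing edges. I would then expand $P(\mathcal{F} = f)$ as a sum over pairs $(\mathcal{T}^+, \sigma)$ whose processing yields $f$, and expand $P(\MF{n} = f)$ using the Section~\ref{secConstructions} description. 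After grouping terms by the unlabeled shape $S$ of $f$ and using that the number of bijective relabelings sending one labeled forest of shape $S$ onto another is $|\mathrm{Aut}(S)|$, the identity reduces to a shape-level statement: the number of rooted labeled trees on $\{1, \ldots, n\}$ with $n$ a leaf whose decreasing-edge deletion has shape $S$, scaled by $(n-1)\cdot|\mathrm{Aut}(S)|$, equals the analogous count coming from the Section~\ref{secConstructions} construction.

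The main obstacle will be proving this shape-level identity, because the decreasing-edge deletion depends delicately on the vertex labels of $\mathcal{T}^+$ and does not factor in an obvious way. I expect to address it either by a direct bijection between the two sets of configurations---with the extra factor of $n-1$ encoded as an additional choice attached to the root of $\mathcal{T}^+$, reflecting that each vertex in the Section~\ref{secConstructions} construction carries one pointer but only non-root vertices of $\mathcal{T}^+$ come equipped with a parent---or by exploiting the uniform relabeling $\sigma$ in step (3) to symmetrize the count over all labelings of $\mathcal{T}^+$, reducing to a classical Cayley-type identity for rooted labeled forests.
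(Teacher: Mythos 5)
Your reformulation of steps (1)--(2) as drawing a uniform rooted tree $\mathcal{T}^+$ on $\{1,\ldots,n\}$ with $n$ a non-root leaf and deleting its decreasing edges is correct, and the cardinality check $(n-1)^{n-2}\cdot(n-1) = (n-1)^{n-1}$ confirms the equivalence. But the proposal stops exactly where the theorem's content lies: the ``shape-level identity'' you reduce to \emph{is} the theorem, and you only flag it as the ``main obstacle,'' naming two possible routes (a direct bijection, or a ``Cayley-type identity'') without carrying either out. The second route cannot suffice on its own: what is needed is not an equality of cardinalities but an edge-preserving correspondence -- a bijection sending the uniform-attachment data to rooted trees so that the resulting forest's edges are exactly the tree's increasing edges.

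You could also simplify your frame and avoid the shape and $|\mathrm{Aut}(S)|$ bookkeeping altogether. Since both the theorem's construction and the uniform-attachment construction of $\MF{n}$ end with a uniform relabeling of $\{1,\ldots,n\}$, it suffices to couple the two \emph{pre-relabeling} objects as fully labeled structures. This is what the paper does: the pre-relabeling UA forest $\FnUA$ is a deterministic function of a uniform vector $\mathbf{u}\in\mathscr{S}^\star_{n-1}$, and the proof constructs an explicit bijection $\Phi$ (a variant of Joyal's bijection) from $\mathscr{S}^\star_{n-1}$ onto rooted trees on $\{1,\ldots,n-1\}$ with the property that the edges of $\FnUA(\mathbf{u})$ are exactly the increasing edges of $\Phi(\mathbf{u})$. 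Concretely, a coordinate shift removes the constraint $u_\ell\neq\ell$ without touching the entries $u_\ell<\ell$ that encode the forest's edges, and then the cycles of the resulting functional digraph are broken and rewired into a path to the root, modifying only decreasing edges. Supplying such a bijection -- which your ``direct bijection'' instinct correctly points toward -- is the missing step; without it, the proposal is a plan rather than a proof.
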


Finally, we study the asymptotic concentration of the largest degree and of
the size of the largest tree of $\MF{n}$. The following theorems are proved in
Sections~\ref{secMaxDegree} and~\ref{secMaxTree}, respectively.


\begin{theorem} \label{thmMaxDegree}
Let $D_n^{\max}$ denote the largest degree of~$\MF{n}$.
Then,
\[
  D_n^{\mathrm{max}} =\; \frac{\log n}{\log\log n} \;+\;
  \big(1 + o_\mathrm{p}(1)\big)\, \frac{\log n\,\log\log\log n}{(\log\log n)^{2}},
\]
where $o_\mathrm{p}(1)$ denotes a sequence of random variables that goes to $0$ in
probability.
\end{theorem}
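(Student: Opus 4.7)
The plan is to apply the first- and second-moment methods to $N_d$, the number of vertices of $\MF{n}$ with degree at least $d$, with the threshold $d$ calibrated just above or below a critical value.

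First I would establish the single-vertex tail. Using either the finite-$n$ law from Proposition~\ref{propLawDegree} or the limit mixture $\mathrm{Ber}(1-U) + \mathrm{Poisson}(U)$ from Table~\ref{tabStatistics}, a Laplace expansion near $u=1$ of an integral of the form $\int_0^1 u^{d-1}(1-u)e^{-u}\,du$ yields
\[
  P\bigl(\deg(v) \geq d\bigr) \;=\; \frac{e^{-1}(1+o(1))}{(d+1)!}
\]
as $d \to \infty$, uniformly for $d \leq C\log n/\log\log n$. Defining $d^{\ast}(n)$ as the largest integer with $n\cdot P(\deg(v) \geq d)\geq 1$, a standard iteration of Stirling's formula gives
\[
  d^{\ast}(n) \;=\; \frac{\log n}{\log\log n} \;+\; \frac{\log n\,\log\log\log n}{(\log\log n)^2} \;+\; O\!\left(\frac{\log n}{(\log\log n)^2}\right),
\]
which matches the statement of the theorem. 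Setting $d_n^{\pm} = d^{\ast}(n) \pm \omega_n \log n/(\log\log n)^2$ for some $\omega_n \to \infty$ slowly, one checks $\mathbb{E}[N_{d_n^+}] \to 0$ and $\mathbb{E}[N_{d_n^-}] \to \infty$. The upper bound on $D_n^{\max}$ then follows immediately from Markov's inequality: $P(D_n^{\max} \geq d_n^+) \leq \mathbb{E}[N_{d_n^+}] \to 0$.

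For the lower bound I would invoke the second-moment method. By exchangeability,
\[
  \mathbb{E}[N_d(N_d-1)] \;=\; n(n-1)\,P\bigl(\deg(1)\geq d,\ \deg(2)\geq d\bigr),
\]
so Chebyshev's inequality reduces the problem to the joint-tail estimate $P(\deg(1)\geq d,\deg(2)\geq d) = (1+o(1))P(\deg(1)\geq d)^2$, uniformly in the relevant range. The natural tool here is Theorem~\ref{thmConstructionUniformTrees}: two uniformly chosen vertices of a large uniform rooted labeled tree on $\{1,\ldots,n-1\}$ lie, with high probability, in almost disjoint subtrees, and the random relabeling step acts on their local neighborhoods nearly independently, so their (rank-weighted) child counts should decouple in the tail.

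\emph{Main obstacle.} The delicate point is quantifying this decoupling. The trivial bound $P(A \cap B) \leq \min(P(A),P(B))$ is nowhere near sufficient; one needs the ratio $P(A\cap B)/(P(A)P(B))$ to tend to $1$ with an error small compared to $1/\mathbb{E}[N_{d_n^-}]$, which itself tends to $0$. Obtaining such a sharp joint-tail estimate, either by an explicit computation on the Moran forest's construction or by coupling the pair of local neighborhoods to independent Poisson variables, will be the bulk of the technical work.
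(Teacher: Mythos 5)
Your first-moment calibration and the overall framing via first and second moment methods match the paper's strategy (which isolates the moment method as Lemma~\ref{lemmaMomentsMethod}), and the tail asymptotic $\Prob{D \geq d} \sim \tfrac{C}{(d+1)!}$ is the right engine for locating the threshold -- though your constant should be $2/e$ rather than $e^{-1}$, cf.\ Proposition~\ref{propTailDegree}(i); that discrepancy is absorbed into lower-order terms and does not affect $d^{\ast}(n)$.

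The genuine gap, which you identify yourself, is the joint-tail decoupling $\Prob{\deg(1)\geq d,\, \deg(2)\geq d} \sim \Prob{\deg(1)\geq d}^2$, and your proposed route via Theorem~\ref{thmConstructionUniformTrees} (arguing that two uniform vertices of a uniform tree sit in ``almost disjoint subtrees'') is not the one the paper takes and would be considerably harder to make quantitative at the required precision. The paper's key move is to replace degrees by \emph{out-degrees} $\tilde{D}_n^{(v)}$ (which differ from $D_n^{(v)}$ by at most $1$, hence preserve the asymptotics) and work directly inside the UA construction. There, conditioning on $B_1 = b_1$, $B_2 = b_2$ and on the values of all the indicators $X^{(1)}_\ell$, each $X^{(2)}_\ell$ is stochastically dominated by a $\mathrm{Bernoulli}(\tfrac{1}{n-2})$ -- indeed the out-degree increments of distinct vertices are \emph{negatively} correlated, since a step of the construction that attaches to vertex $1$ cannot also attach to vertex $2$. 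This yields, uniformly in $i$, $\big(\tilde{D}_n^{(2)} \mid \tilde{D}_n^{(1)}=i\big) \overset{d}{\leq} \mathrm{Bin}\big(\altoverline{L}{2}, \tfrac{1}{n-2}\big)$ with $\altoverline{L}{2}$ uniform on $\{1,\ldots,n-1\}$, which is essentially $\tilde{D}_{n-1}$; the tail estimate~\eqref{eqLogTailDtilde} then closes the argument via Remark~\ref{rem:three-prime}. Without this switch to out-degrees the correlation bound is much murkier (a shared incoming edge cannot occur, but the bookkeeping through in-degrees is awkward), and the uniform-tree detour you sketch would have to reproduce, in disguise, exactly this negative-dependence structure. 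So the skeleton of your proof is right, but the load-bearing step is missing and the tool you reach for is not the one that makes it go through cleanly.
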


\begin{theorem} \label{thm:largestTree}
Let $\Tmax_n$ denote the size of the largest tree of $\MF{n}$. Then,
\[
  \Tmax_n = \alpha\big(\log n - (1+o_{\mathrm{p}}(1))\log\log n\big),
\]
where $\alpha = (1-\log(e-1))^{-1} \approx 2.18019$.
\end{theorem}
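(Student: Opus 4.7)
The plan is to apply a first-and-second moment argument to $N_{\ge k}$, the number of components of $\MF{n}$ of size at least $k$, using the trivial equivalence $\{\Tmax_n \ge k\} = \{N_{\ge k} \ge 1\}$.

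The starting point is the asymptotic estimate
\[
  \mathbb{E}\bigl[N_{\ge k}\bigr] \;\sim\; \frac{n(e-1)}{k}\Bigl(\frac{e-1}{e}\Bigr)^{k-1},
\]
valid for $k = \Theta(\log n)$. In the limiting regime, this comes from combining $\mathbb{E}[N_n]=n/2$ (read off the table) with $\mathbb{P}(T^U \ge k)=\int_0^1 2x(1-e^{-x})^{k-1}\,dx$, whose integrand concentrates at $x=1$ with value $(1-e^{-1})^{k-1}=e^{-(k-1)/\alpha}$ (since $\log((e-1)/e) = -(1-\log(e-1)) = -1/\alpha$); Laplace's method at $x=1$ then yields the tail. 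To extend the estimate uniformly to $k = \Theta(\log n)$, where the strictly asymptotic distribution of $T^U$ is insufficient, I would exploit the explicit construction of $\MF{n}$ in Section~\ref{secConstructions} to obtain a finite-$n$ formula for $\mathbb{E}[N_k]$ and analyse it via Stirling and a saddle-point argument.

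Solving $\mathbb{E}[N_{\ge k}] \asymp 1$ gives $k = \alpha(\log n - \log\log n) + O(1)$, so for any $\varepsilon>0$ and the cut-offs $k_\pm = \alpha(\log n - (1 \mp \varepsilon)\log\log n)$ one finds $\mathbb{E}[N_{\ge k_+}] = O((\log n)^{-\varepsilon}) \to 0$ and $\mathbb{E}[N_{\ge k_-}] = \Omega((\log n)^{\varepsilon}) \to \infty$. The upper bound $\mathbb{P}(\Tmax_n \ge k_+) \to 0$ then follows immediately from Markov's inequality. For the lower bound, Chebyshev gives
\[
  \mathbb{P}(N_{\ge k_-} = 0) \;\le\; \frac{\mathrm{Var}(N_{\ge k_-})}{\mathbb{E}[N_{\ge k_-}]^2},
\]
so it suffices to establish the asymptotic pairwise independence $\mathbb{E}[N_{\ge k_-}(N_{\ge k_-}-1)] \sim \mathbb{E}[N_{\ge k_-}]^2$.

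This second-moment estimate is the main obstacle. It requires, for every pair of disjoint rooted labelled trees $T_1, T_2$ of sizes at least $k_-$, controlling the joint probability that both $T_1$ and $T_2$ appear as components of $\MF{n}$ and showing that, summed over such pairs, it matches the square of the first moment up to a factor $1+o(1)$. My plan is to exploit the explicit construction of $\MF{n}$ (Section~\ref{secConstructions}, or Theorem~\ref{thmConstructionUniformTrees}) together with the observation that two trees of size $\Theta(\log n)$ together occupy only $o(n)$ vertices: conditioning on the two trees being components reduces the problem on the remaining vertices to a structurally similar, slightly smaller Moran-forest-type object whose partition function differs from the original by only $1+o(1)$. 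An equivalent framework would be a Chen--Stein Poisson approximation for the family of indicators $\mathbf{1}[T \text{ is a component of } \MF{n}]$ over rooted labelled trees $T$ of size $\ge k_-$, where the pairwise dependence term is controlled by the same small-overlap argument.
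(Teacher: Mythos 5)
Your outline (first-and-second moment on a counting variable, heuristic first-moment estimate, upper bound by Markov, lower bound by Chebyshev) is sound, and your first-moment calculation is correct: indeed $\alpha^{-1} = -\log(1-e^{-1})$, the Laplace expansion of $\Prob{T^U\geq k}$ at $x=1$ gives $\frac{2(e-1)}{k}(1-e^{-1})^{k-1}$, and solving $\mathbb{E}[N_{\geq k}]\asymp 1$ gives $k\approx\alpha(\log n-\log\log n)$. However, your choice of counting variable differs from the paper's. You count components of size $\geq k$, i.e.\ a sum of indicators indexed by vertex subsets $A$ (or equivalently by tree roots). The paper instead works per-vertex, with the exchangeable family $\tilde T_n^{(v)}$ of sizes of subtrees descending from $v$ in the UA construction, noting that $\Tmax_n=\max_v \tilde T_n^{(v)}$. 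That choice is what makes the whole machine turn: conditionally on being born at height $h$ in the UA construction, $\tilde T_n^{(v)}$ is a discrete-time Yule process $\Upsilon_n(h)$, and both the uniform tail estimate (your ``uniformly to $k=\Theta(\log n)$'' step) and the second-moment bound are obtained by coupling $\Upsilon_n$ with a continuous-time Yule process.

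The genuine gap in your proposal is the second-moment step, which you yourself flag as ``the main obstacle'' but then only gesture at. The claim that conditioning on $T_1, T_2$ being components ``reduces the problem on the remaining vertices to a structurally similar, slightly smaller Moran-forest-type object whose partition function differs by only $1+o(1)$'' is not obviously true and is the entire difficulty. The paper explicitly points out that the Moran forest is \emph{not} sampling consistent and that no closed form is known for its EPPF, which is precisely the object (the joint law of component sizes) your plan needs to control. Conditioning on a component in the UA construction constrains the $U_n(\ell)$ in a position-dependent way that does not factorize into a clean smaller Moran forest; what survives is a domination argument, and the paper carries this out in Lemma~\ref{lem:asymptotic-indep-largest-tree} by (a) splitting off the negligible event where one of the two focal vertices descends from the other, and (b) on the complement, showing the process building $\tilde{\Ti}_n^{(2)}$ conditional on $\tilde T_n^{(1)}=i$ is dominated by $\Upsilon_{n-i}$, which is then compared to $\tilde T_{n-i}$. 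Without a concrete mechanism of this kind, the small-overlap heuristic (or a Chen--Stein reformulation, which needs the same pairwise bound) is not a proof.
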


\section{Sampling of the stationary distribution} \label{secConstructions}

\subsection{Backward construction} \label{secBackward}

Consider an i.i.d.\ sequence $((V_t, W_t),\, t \in \Z)$, where
$(V_t, W_t)$ is uniformly distributed on the set of ordered pairs of distinct
elements of $\Set{1, \ldots, n}$. These variables are meant to encode the
transitions of the chain: $W_t$ represents the vertex that is disconnected at
step $t$, and $V_t$ the vertex to which $W_t$ is then connected. 
We now explain how to construct a chain $(\MF{n}(t),\, t \in \Z)$ of forests
by looking at the sequence $((V_t, W_t),\, t \in \Z)$ backwards in time.

Fix a focal time $t \in \Z$. For each vertex $w$, let us denote by
\[
    \tau_t(w) \defas \max \Set{s \le t \suchthat W_s = w}
\]
the last time before $t$ that $w$ was chosen to be disconnected, and define
\[
  m_t(w) \defas V_{\tau_t(w)}
\]
to be the vertex to which it was then reconnected.  We refer to the time
$\tau_t(w)$ as the \emph{birth time} of $w$, and to the vertex~$m_t(w)$ as its
\emph{mother}. Note that the variables $(\tau_t(w),\, 1 \le w \le n)$ are
independent of $(m_t(w),\, 1 \le v \le n)$.

Now, for each $s \le t$, let the vertices be in one of two states, \emph{active}
or \emph{inactive}, as follows: vertex $w$ is active at times~$s$ such
that ${\tau_t(w) \le s \le t}$, and inactive at times $s < \tau_t(w)$.
Finally, let $\MF{n}(t)$ be the forest obtained by connecting each vertex
$w$ to its mother if the mother is active at the time of birth of $w$, that is,
\[
  \text{there is an edge from $m_t(w)$ to $w$} \;\iff\; \tau_t(m_t(w)) < \tau_t(w).
\]
This procedure is illustrated in Figure~\ref{figBackwardConstruction}.

\begin{figure}[ht]
  \centering
  \includegraphics[width=0.85\linewidth]{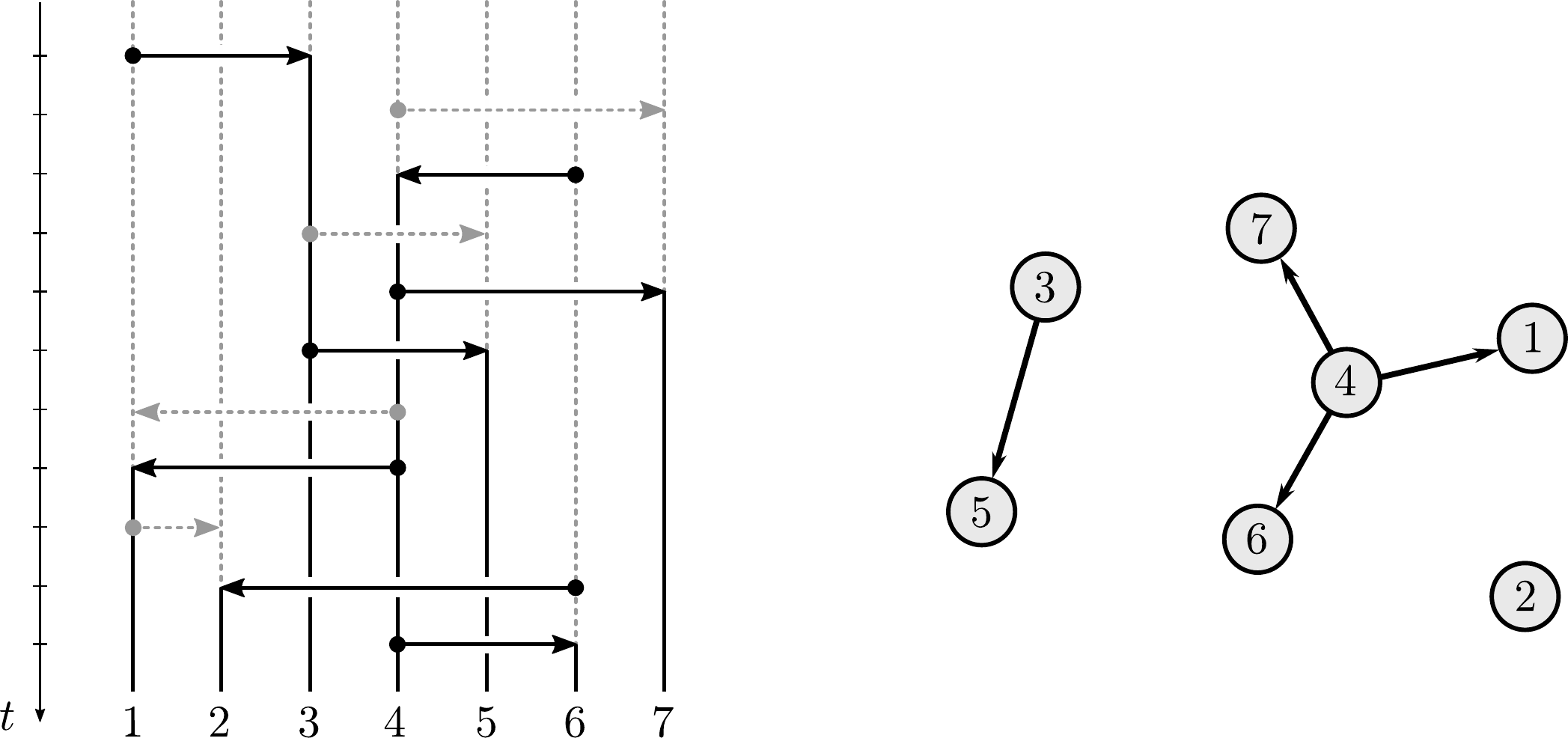}
  \caption{Illustration of the backward construction. Each vertex corresponds
  to a vertical line. A pair $(V_t, W_t)$ is represented by an arrow
  $V_t \to W_t$. The line representing a vertex is solid black when that
  vertex is active, and dashed grey when it is inactive. Arrows pointing to
  inactive vertices are represented in dashed grey because they have no impact
  on the state of the graph at the focal time: their effect has been erased
  by subsequent arrows.}
  \label{figBackwardConstruction}
\end{figure}

Let us show that the chain $(\MF{n}(t),\, t \in \Z)$ has the same
transitions as the chain described in the introduction. 
First, note that for $w \ne W_t$ we have $\tau_t(w) = \tau_{t-1}(w)$,
and thus $m_t(w) = m_{t-1}(w)$. As a result, edges that do not
involve $W_t$ are the same in $\MF{n}(t)$ and in $\MF{n}(t - 1)$.
Now, $\tau_t(W_t) = t$, so that $W_t$ is always inactive as a mother in the
construction of $\MF{n}(t)$, and $m_t(W_t) = V_t$ with $\tau_t(V_t) < t$,
so that $W_t$ is linked to $V_t$ in $\MF{n}(t)$.  In other words, $\MF{n}(t)$
is obtained from $\MF{n}(t - 1)$ by disconnecting $W_t$ from its neighbors, and
then connecting it to $V_t$. This corresponds to the transitions of the
chain described in the introduction.

Finally, $(\MF{n}(t),\, t \in \Z)$ is stationary by construction, and thus
$\MF{n}(t)$ is distributed as the Moran forest for all time $t \in \Z$.

\subsection{Uniform attachment construction} \label{secUAC}

We now give a forward-in-time variant of the construction described in
the previous section. This forward-in-time procedure, which we call the
\emph{uniform attachment construction} (UA construction for short), is our main
tool to study $\MF{n}$ and will be used throughout the rest of the
paper.

In the following, we fix $n\geq 2$, since the forest $\MF{n}$ is not defined for $n=1$.
Let $(U_n(\ell),\, 1 \le \ell \le n)$ be a vector of independent variables
such that $U_n(\ell)$ is uniformly distributed on $\Set{1, \dots, n}
\setminus \Set{\ell}$. Consider the forest $\MF{n}^*$ on $\Set{1, \dots, n}$
obtained by setting 
\[
  \text{there is an edge from $k$ to $\ell$, with $k < \ell$} \iff U_n(\ell) = k.
\]
We will show that, after relabeling the vertices of $\MF{n}^*$
according to a uniform permutation of $\Set{1, \ldots, n}$, we obtain the
Moran forest. Before this let us make a few remarks.

First, it will be helpful to think of the construction of $\MF{n}^*$
as a sequential process where, starting from a single vertex labeled~1, 
for $\ell = 2, \ldots, n$ we add a new vertex labeled $\ell$
and connect it to $U_n(\ell)$ if $U_n(\ell) < \ell$. See Figure~\ref{figUAC}.
This will make the link with some well-known stochastic processes more intuitive.
This also explains that we speak of the \emph{$\ell$-th vertex
in the UA construction} to refer to vertex $\ell$ in~$\MF{n}^*$.

Second, the edges of $\MF{n}^*$ are by construction increasing,
in the sense that every edge $\vec{uv}$ in the graph is such that $u < v$.

Rooted trees that have only increasing edges are known as
\emph{recursive trees}~\cite{Drmota2009}, and forests of recursive trees have
been called \emph{recursive forests}~\cite{Balinska1994}.
Recursive trees have been studied extensively. In
particular, the uniform attachment tree, which corresponds to the uniform
distribution over the set of recursive trees, has received much
attention~\cite{Bergeron1992, Meir1974, Mahmoud1991}.
However, the random forest $\MF{n}^*$ does not seem to correspond
to any previously studied model of random recursive forest (in particular, it
is not uniformly distributed over the set of recursive forests).

\begin{figure}[h!]
  \centering
  \includegraphics[width=0.9\linewidth]{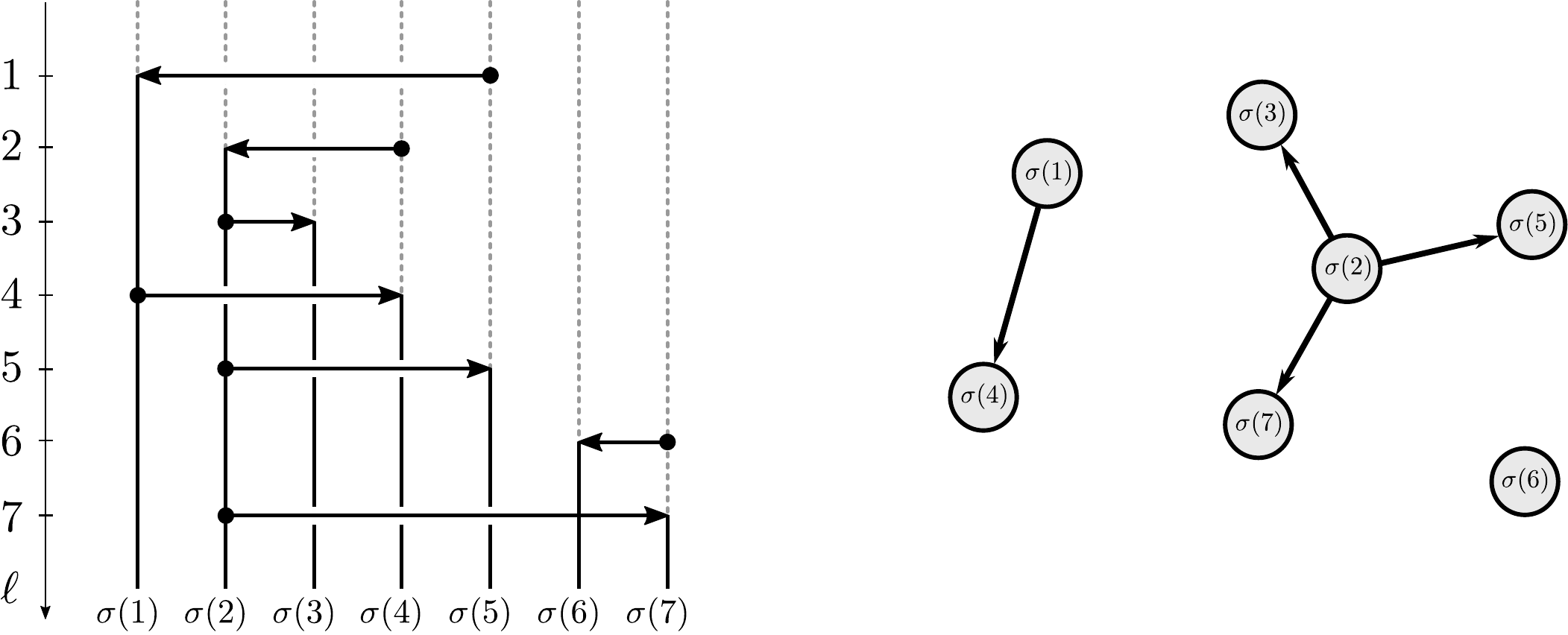}
  \caption{Illustration of the uniform attachment construction for $n = 7$ and
  the vector $(U_n(1), \ldots, U_n(n)) =(5, 4, 2, 1, 2, 7, 2)$.
  The $\ell$-th vertical line from the left corresponds to
  vertex~$\sigma(\ell)$ (i.e, in the sequential vision, to the $\ell$-th vertex
  that is added). $U_n(\ell)$ is represented by the arrow pointing from the
  $U_n(\ell)$-th line to the $\ell$-th one at time $\ell$.
  Compare this with Figure~\ref{figBackwardConstruction}: the vertical lines
  corresponding to the vertices have been reordered in increasing order of
  their birth time, and the grey arrows that left no trace on the graph at the
  focal time have been removed.} \label{figUAC}
\vspace{-1.5ex} 
\end{figure}

\begin{proposition}
The random forest obtained by relabeling the vertices of $\MF{n}^*$ according
to a uniform permutation of $\Set{1, \ldots, n}$ is distributed as the Moran
forest.
\end{proposition}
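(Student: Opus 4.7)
The plan is to identify the relabeled $\MF{n}^*$ with the stationary graph $\MF{n}(t)$ produced by the backward construction of Section~\ref{secBackward}. Fix a focal time $t$ and consider the birth times $\tau_t(1), \ldots, \tau_t(n)$; these are almost surely distinct, so they induce a well-defined ordering of the vertices. Let $\sigma$ be the random permutation of $\Set{1, \ldots, n}$ such that $\sigma(\ell)$ is the vertex with the $\ell$-th smallest birth time, so that $\sigma(1), \ldots, \sigma(n)$ lists the vertices in the order in which they would appear in a sequential UA construction. Because the pairs $((V_s, W_s))_{s \in \Z}$ are i.i.d., the ranks of $(\tau_t(w))$ form a uniformly random permutation, hence $\sigma$ is uniform on $\Set{1, \ldots, n}$. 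Moreover, the independence of $(\tau_t(w))$ from $(m_t(w))$ noted in Section~\ref{secBackward} implies that $\sigma$ is independent of the mother map $w \mapsto m_t(w)$.

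Next, I would check that relabeling vertices by their birth rank turns $\MF{n}(t)$ into a copy of $\MF{n}^*$. For $\ell = 1, \ldots, n$, define $U_n(\ell) := \sigma^{-1}(m_t(\sigma(\ell)))$. Conditional on $\sigma$, the mother $m_t(\sigma(\ell)) = V_{\tau_t(\sigma(\ell))}$ is uniform on $\Set{1, \ldots, n}\setminus\Set{\sigma(\ell)}$, and these mothers are independent across $\ell$ since they are drawn at distinct times $\tau_t(\sigma(\ell))$ from the i.i.d.\ sequence $(V_s)$. Therefore, conditional on $\sigma$, the variables $U_n(\ell)$ are independent with $U_n(\ell)$ uniform on $\Set{1, \ldots, n}\setminus\Set{\ell}$; this conditional law does not depend on $\sigma$, so the $U_n(\ell)$ are independent of $\sigma$ and have exactly the distribution prescribed in the UA construction. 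The edge rule ``there is an edge from $m_t(w)$ to $w$ iff $\tau_t(m_t(w)) < \tau_t(w)$'' translates, after applying $\sigma^{-1}$ to all labels, into ``there is an edge from $U_n(\ell)$ to $\ell$ iff $U_n(\ell) < \ell$'', which is exactly the edge rule defining $\MF{n}^*$. Hence the relabeled graph $\sigma^{-1}(\MF{n}(t))$ is equal in distribution to $\MF{n}^*$, or equivalently $\MF{n}(t)$ is equal in distribution to $\sigma(\MF{n}^*)$.

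Since $\MF{n}(t)$ has the law of the Moran forest by Section~\ref{secBackward}, and since $\sigma$ is a uniform permutation independent of $\MF{n}^*$, this establishes the proposition. The only delicate point, which I would spell out carefully, is the joint uniformity and independence of $\sigma$ with respect to the attachment vector $(U_n(\ell))$; this reduces to a conditioning argument that exploits the decoupling between the birth times $(\tau_t(w))$ and the mothers $(m_t(w))$, itself a direct consequence of the product structure of the i.i.d.\ pairs $(V_s, W_s)$.
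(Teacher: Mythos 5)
Your proposal is correct and follows essentially the same route as the paper: relabel $\MF{n}(t)$ from the backward construction by the rank permutation $\sigma$ of the birth times, observe that the relabeled mother map $U_n(\ell) = \sigma^{-1}(m_t(\sigma(\ell)))$ has the UA-construction law and is independent of $\sigma$ (which follows from the independence of $(\tau_t(w))$ and $(m_t(w))$ and the exchangeability of $(\tau_t(w))$), and translate the edge rule. The only cosmetic difference is that you establish independence of $\sigma$ and $(U_n(\ell))$ by a conditioning argument, whereas the paper computes the joint probability $\Prob{\sigma = \pi,\, m^* = f} = \tfrac{1}{n!}\tfrac{1}{(n-1)^n}$ explicitly.
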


\begin{proof}
Consider the forest $\MF{n}(0)$ built from the variables
$((V_t, W_t),\, t \in \Z)$ in the previous section. To ease notation, we will
omit the subscript in $\tau_0$ and $m_0$.
The proof hinges on the fact that there is a natural coupling of $\MF{n}(0)$
with a forest $\MF{n}^*$ having the aforementioned distribution, in a way such
that conditional on $\MF{n}^*=\mathcal{F}$, $\MF{n}(0)$ is a uniform relabeling
of $\mathcal{F}$.

Let us relabel the vertices in increasing order of their
birth time: since the variables $(\tau(v),\, 1 \le v \le n)$ are all distinct,
there exists a unique permutation $\sigma$ of $\Set{1, \dots, n}$ such that
\[
  \tau(\sigma(1)) < \dots < \tau(\sigma(n)).
\]
In words, $\sigma(\ell)$ is the $\ell$-th vertex that was born in the
construction of $\MF{n}(0)$. Using the new labeling, let us denote its
birth time by $\tau^*(\ell) = \tau(\sigma(\ell))$ and its mother by
$m^*(\ell) = \sigma^{-1}(m(\sigma(\ell)))$.
  
Now, for every vertex $v = \sigma(\ell)$,
\begin{align*}
  \text{there is an edge from $m(v)$ to $v$ in $\MF{n}(0)$}\;
  &\iff\; \tau(m(v)) < \tau(v)\\
  &\iff\; \tau^*(m^*(\ell)) < \tau^*(\ell)\\
  &\iff\; m^*(\ell) < \ell \,.
\end{align*}
Thus, if we set $U_n(\ell) = m^*(\ell)$ in the construction of $\MF{n}^*$
then $\ell$ is connected to $m^*(\ell)$ if and only if
$v = \sigma(\ell)$ is connected to $m(v)=\sigma(m^*(\ell))$ in $\MF{n}(0)$. Therefore, to finish
the proof we have to show that:
\begin{mathlist}
\item The variables $(m^*(\ell),\, 1 \leq \ell \leq n)$ are independent and such that
    $m^*(\ell)$ is uniformly distributed on $\Set{1, \dots, n} \setminus \Set{\ell}$.
  \item The permutation $\sigma$ is uniform and independent of
    $(m^*(\ell),\, 1 \leq \ell \leq n)$.
\end{mathlist}

First, note that by construction the variables $(m(v),\,1\leq v\leq n)$
are independent and that for each $v$, $m(v)$ is uniformly distributed on
$\Set{1, \dots, n} \setminus \Set{v}$.
Since the permutation $\sigma$ depends only on the variables
$(\tau(v),\, 1 \le v \le n)$, which are independent of
$(m(v),\, 1 \le v \le n)$, we see that $\sigma$ is independent of
${(m(v),\, 1 \le v \le n)}$. Moreover, the variables $(\tau(v),\, 1 \le v \le n)$
are exchangeable so the permutation $\sigma$ is uniform. Now, for any fixed
permutation $\pi$ of $\Set{1, \ldots n}$ and any fixed map
$f : \Set{1, \ldots, n} \to \Set{1, \ldots, n}$ such that $f(\ell) \neq \ell$
for all $\ell$, we have
\begin{align*}
  \Prob{\sigma = \pi, \, m^* = f} \;
  &=\; \Prob{\sigma = \pi,\, m = \pi \circ f \circ \pi^{-1}}\\
  &=\; \frac{1}{n!} \frac{1}{(n-1)^n}\,, 
\end{align*}
concluding the proof.
\end{proof}

One might hope to obtain a complete description of the distribution of
the Moran forest from the UA construction. Indeed, let us denote by
$(S_1, \dots, S_{N_n})$ the sizes of the trees in the Moran forest,
labeled in decreasing order of their sizes, and where $N_n$ is the number
of trees. Then it is clear from the UA construction, that conditional on
the vector $(S_1, \dots, S_{N_n})$, the trees of the Moran
forest are independent uniform attachment trees. Therefore, the
study of the Moran forest reduces to that of the distribution of 
$(S_1, \dots, S_{N_n})$. In the terminology of exchangeable partitions, 
we need to study the \emph{exchangeable partition probability function}
(EPPF) of the Moran forest~\cite{Pitman2006}. However, we could not find
any closed expression for this EPPF. Note that the Moran forest is
\emph{not} sampling consistent, i.e., the restriction of $\MF{n+1}$ to 
$\Set{1, \dots, n}$ is not distributed as $\MF{n}$. Therefore the
EPPF of the Moran forest cannot be obtained through a Chinese restaurant
process. 

\pagebreak 

\section{Number of trees} \label{secNumberTrees}

\subsection{Law of the number of trees} \label{secLawNumberTrees}

In the UA construction, let $I_\ell = \Indic{U_n(\ell) < \ell}$ be the
indicator variable of the event ``the $\ell$-th vertex was linked to a
previously added vertex''. The variables $(I_1, \ldots, I_n)$ are thus
independent Bernoulli variables such that
\[
  I_\ell \;\sim\; \mathrm{Bernoulli}\mleft(\tfrac{\ell - 1}{n - 1}\mright)\,.
\]
With this notation, the number of edges $\Abs{E_n}$ and the number
of trees $N_n$ are
\[
  \Abs{E_n} = \sum_{\ell = 1}^{n} I_\ell
  \quad\text{and}\quad
  N_n = \sum_{\ell = 1}^{n} (1 - I_\ell) \,.
\]
Moreover, since $I_\ell \overset{d}{=} 1 - I_{n - \ell+1}$, we see that
\[
  \Prob{N_n = k} \;=\; \Prob{N_n = n - k} \;=\; \Prob{\Abs{E_n} = k}\,,
\]
that is, the number of trees and the number of edges have the
same, symmetric distribution. In consequence, from now on we only use the
notation $N_n$ and refer to it as the number of trees of $\MF{n}$ when stating
our results -- even though we sometimes work with the number of edges in the
proofs.

From the representation of $N_n$ as a sum of independent Bernoulli variables,
we immediately get the following result.

\begin{proposition} \label{propNumberTrees}
Let $N_n$ denote the number of trees of $\MF{n}$.
\begin{mathlist}
  \item $\Expec{N_n} = \dfrac{n}{2}$.
  \item $\Var{N_n} = \dfrac{n(n - 2)}{6 (n - 1)}$.
  \item $\displaystyle G_{N_n}(z) \defas \Expec{z^{N_n}} =
    \prod_{k = 1}^{n - 1} \mleft(1 + \tfrac{k}{n - 1}(z - 1)\mright)$.
\end{mathlist}
\end{proposition}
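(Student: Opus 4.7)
The plan is to exploit the explicit representation $N_n = \sum_{\ell=1}^{n} (1 - I_\ell)$ with independent Bernoulli summands $I_\ell$ of parameter $(\ell - 1)/(n - 1)$, which was just established. All three items are direct consequences of this decomposition, with no probabilistic subtlety involved; the proposition really is just a repackaging of standard facts about sums of independent Bernoulli variables.

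For (i), linearity of expectation gives $E[N_n] = \sum_{\ell=1}^n \bigl(1 - (\ell-1)/(n-1)\bigr) = \frac{1}{n-1}\sum_{\ell=1}^n (n-\ell)$, and the elementary identity $\sum_{\ell=1}^n (n-\ell) = n(n-1)/2$ yields $n/2$. For (ii), the independence of the $I_\ell$ ensures $\mathrm{Var}(N_n) = \sum_\ell p_\ell(1 - p_\ell)$ with $p_\ell = (\ell - 1)/(n-1)$; setting $j = \ell - 1$, this becomes $\frac{1}{(n-1)^2}\sum_{j=0}^{n-1} j(n-1-j)$, which collapses to $n(n-2)/(6(n-1))$ after applying the closed-form expressions for $\sum_{j=0}^{n-1} j$ and $\sum_{j=0}^{n-1} j^2$ and a little factoring by $(n-1)n/6$.

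For (iii), the probability generating function of an independent sum factors as $G_{N_n}(z) = \prod_{\ell=1}^n E[z^{1-I_\ell}]$, and a one-line computation gives $E[z^{1-I_\ell}] = (\ell-1)/(n-1) + z(n-\ell)/(n-1) = 1 + (n-\ell)(z-1)/(n-1)$. Reindexing by $k = n - \ell$ and discarding the trivial $k = 0$ factor yields precisely $\prod_{k=1}^{n-1}\bigl(1 + k(z-1)/(n-1)\bigr)$, as announced. No genuine obstacle is expected here: once the Bernoulli representation is in hand, the whole proposition reduces to undergraduate-level algebra, and (i) and (ii) can even be recovered from (iii) by differentiating $G_{N_n}$ at $z=1$ if one wishes to avoid duplicating the computations.
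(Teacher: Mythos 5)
Your proof is correct and follows exactly the route the paper intends: the paper itself gives no detailed argument, simply remarking that the proposition follows immediately from the representation $N_n = \sum_\ell (1-I_\ell)$ as a sum of independent Bernoulli variables, and your computations fill in precisely those routine steps (linearity of expectation, additivity of variance, factorization of the generating function). Nothing further to add.
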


The representation of $N_n$ as a sum of independent Bernoulli variables
also makes it straightforward to get the following central limit theorem.

\begin{proposition} \label{propCLTNumberTrees}
Let $N_n$ denote the number of trees of $\MF{n}$. Then,
\[
  \frac{N_n - n/2}{\sqrt{n/6}} \;\tendsto[d]{n\to \infty}\;
  \mathcal{N}(0, 1) \,.
\]
\end{proposition}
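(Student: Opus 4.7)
The plan is to invoke a classical central limit theorem for sums of independent but not identically distributed random variables, applied to the representation $N_n = \sum_{\ell=1}^n (1 - I_\ell)$ established just above. Since the $I_\ell$ are independent Bernoulli$((\ell-1)/(n-1))$, the centered summands $X_\ell \defas (1 - I_\ell) - \mathbb{E}[1 - I_\ell]$ are independent, bounded by $1$ in absolute value, and have total variance $s_n^2 \defas \mathrm{Var}(N_n) = n(n-2)/(6(n-1))$ by Proposition~\ref{propNumberTrees}(ii); in particular $s_n^2 \sim n/6$ as $n \to \infty$.

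I would then verify Lyapunov's condition with $\delta = 1$. Because $|X_\ell| \le 1$ almost surely, one has $\mathbb{E}[|X_\ell|^3] \le \mathbb{E}[X_\ell^2] = \mathrm{Var}(X_\ell) \le \tfrac14$, so
\[
  \frac{1}{s_n^{3}} \sum_{\ell=1}^n \mathbb{E}\bigl[|X_\ell|^{3}\bigr] \;\le\; \frac{n/4}{s_n^{3}} \;=\; O\bigl(n^{-1/2}\bigr) \;\xrightarrow[n\to\infty]{}\; 0.
\]
Lyapunov's CLT then yields $(N_n - n/2)/s_n \Rightarrow \mathcal{N}(0,1)$, and since $s_n / \sqrt{n/6} \to 1$, Slutsky's lemma gives the stated convergence with the explicit normalisation $\sqrt{n/6}$.

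There is essentially no obstacle: the argument is a textbook application of Lyapunov's CLT, made trivial by the uniform boundedness of the Bernoulli variables. The only small point worth stating explicitly is the replacement of $s_n$ by its asymptotic equivalent $\sqrt{n/6}$ at the end, which requires nothing beyond Proposition~\ref{propNumberTrees}(ii). Alternatively, one could verify Lindeberg's condition directly (trivially satisfied since each $|X_\ell| \le 1 \ll \varepsilon s_n$ for $n$ large enough), or read off the result from the explicit generating function in Proposition~\ref{propNumberTrees}(iii) by taking logarithms and expanding, but the Lyapunov route is the shortest.
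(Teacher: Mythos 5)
Your argument is correct and is essentially the same as the paper's: both verify Lyapunov's condition with third moments for the independent Bernoulli summands, exploiting their uniform boundedness to bound the Lyapunov ratio by $O(n^{-1/2})$. The only cosmetic difference is that you normalize by the exact $s_n$ and invoke Slutsky at the end, whereas the paper writes the Lyapunov ratio directly with $n^{3/2}$ in place of $s_n^3$; both are equivalent.
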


\begin{proof}
This is an immediate consequence of the Lyapunov CLT for triangular arrays of
independent random variables. Indeed, $\Expec{\Abs{I_\ell - \Expec{I_\ell}}^3} \leq 1$.
Therefore,
\[
   \frac{1}{n^{3/2}} \sum_{\ell = 1}^n 
  \Expec{\Abs{I_\ell - \Expec{I_\ell}}^3}  \leq \frac{1}{\sqrt{n}}
  \;\tendsto{n \to \infty}\; 0\,,
\]
and the result follows, e.g., from {Corollary~11.1.4} in \cite{Athreya2006}.
\end{proof}

\subsection{Link with uniform labeled trees} \label{secUnifLabeledTrees}

As announced in the introduction, there is a strong connection between the Moran
forest and uniform labeled trees. Our starting point is the
following observation about the probability generating function of $N_n$. First,
\change{we can rewrite point~(iii) of Proposition \ref{propNumberTrees} as
\begin{align*}
  G_{N_n}(z)
  \;&=\; \frac{z}{(n - 1)^{n - 2}}\prod_{k = 1}^{n - 2} \big(n - 1 - k + k z\big)\\[1ex]
  \;&=\; \sum_{k = 0}^{n - 2} \frac{a(n - 1, k)}{(n - 1)^{n - 2}}\,z^{k + 1}, 
\end{align*}
}
where
\[
  \sum_{k = 0}^{n - 2} a(n - 1, k)\, z^k \;=\;
  \prod_{k = 1}^{n - 2} \big(n - 1 - k + k z\big). 
\]
Second, the coefficients of this polynomial have a simple combinatorial
interpretation: $a(n - 1, k)$ is the number of rooted
trees on $\Set{1, \ldots, n - 1}$ with $k$ increasing edges, where an edge
$\vec{uv}$ pointing away from the root is said to be increasing if ${u < v}$.
This fact is known in the literature as a consequence of the more general
Theorem~1.1 of~\cite{Egecioglu1986} (see also {Example~1.7.2}
in~\cite{Drake2008} and {Theorem~9.1} in~\cite{Gessel2006}).

This simple observation already gives us the following proposition.

\begin{proposition} \label{propPMFNumberTrees}
The probability mass function of the number of trees of $\MF{n}$ is
\[
  \Prob{N_n = k} \;=\; \frac{a(n - 1, k - 1)}{(n - 1)^{n - 2}} \,,
\]
where $a(n, k)$ is the number of rooted trees on $\Set{1, \ldots n}$ with $k$
increasing edges (sequence \href{https://oeis.org/A067948}{A067948} of the
On-Line Encyclopedia of Integer Sequences~\cite{OEIS}).
\end{proposition}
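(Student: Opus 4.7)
The proposition is essentially a matter of reading off coefficients from the probability generating function that has already been rewritten just above the statement. Concretely, the plan is to combine the two ingredients laid out in the preceding paragraphs: the factorization of $G_{N_n}$ coming from $N_n=\sum_\ell(1-I_\ell)$ with $I_\ell\sim\mathrm{Bernoulli}(\frac{\ell-1}{n-1})$, and the combinatorial identification of the polynomial $\prod_{k=1}^{n-2}(n-1-k+kz)$ whose coefficients are the integers $a(n-1,k)$.

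First I would recall that by Proposition~\ref{propNumberTrees}(iii), after pulling out the $\ell=1$ factor (which contributes a bare $z$) and re-indexing, we have
\[
  G_{N_n}(z)\;=\;\frac{z}{(n-1)^{n-2}}\prod_{k=1}^{n-2}\bigl(n-1-k+kz\bigr)
  \;=\;\sum_{j=0}^{n-2}\frac{a(n-1,j)}{(n-1)^{n-2}}\,z^{j+1},
\]
where the coefficients $a(n-1,j)$ are defined by the polynomial identity in the excerpt. Since $G_{N_n}(z)=\sum_{k\ge0}\Prob{N_n=k}\,z^k$, identifying the coefficient of $z^k$ on both sides immediately yields $\Prob{N_n=k}=a(n-1,k-1)/(n-1)^{n-2}$ for $1\le k\le n-1$, and zero otherwise.

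The only substantive step is then to justify the combinatorial interpretation of the $a(n-1,k)$, namely that $a(m,k)$ counts rooted labeled trees on $\{1,\ldots,m\}$ with exactly $k$ increasing edges. This is not something I would prove from scratch: it is a classical result obtained as a specialization of the Eğecioğlu--Remmel type expansion cited in the excerpt (Theorem~1.1 of \cite{Egecioglu1986}; see also \cite{Drake2008,Gessel2006}), so I would simply invoke that reference. If I wanted a self-contained derivation I would give a short bijective argument: decompose a rooted tree on $\{1,\ldots,m\}$ by its Prüfer-like code or by the position of the maximum, and track the parity of parent-child comparisons edge by edge — but this is not necessary for the statement, since the identity of polynomials is all that is needed.

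No step here is a real obstacle; the proof is essentially two lines: write down the PGF in the factorized form above, and quote the known combinatorial identity for $a(n-1,k)$. I would keep the exposition as brief as possible and make sure to flag the edge cases $k=0$ and $k\ge n$, for which both sides vanish (the empty product convention gives $a(n-1,k)=0$ outside the range $0\le k\le n-2$).
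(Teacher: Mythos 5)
Your proposal is correct and follows exactly the paper's route: rewrite $G_{N_n}$ via Proposition~\ref{propNumberTrees}(iii) as $\frac{z}{(n-1)^{n-2}}\prod_{k=1}^{n-2}(n-1-k+kz)$, identify the coefficient of $z^k$, and invoke the combinatorial interpretation of $a(n-1,k)$ from \cite{Egecioglu1986} (and \cite{Drake2008,Gessel2006}). The paper presents this as an immediate consequence of the two observations preceding the statement, which is precisely what you did.
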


\change{
Looking for a bijective proof of Proposition~\ref{propPMFNumberTrees} naturally
leads to the more general Theorem~\ref{thmConstructionUniformTrees}, which
states that the Moran forest $\MF{n}$ can be obtained from a uniform rooted
tree on $\Set{1, \ldots, n - 1}$, denoted by $\mathcal{T}$, using the following
procedure:
\begin{enumerate}
  \item Remove all decreasing edges from $\mathcal{T}$ (that is, edges
    $\vec{uv}$ pointing away from the root such that $u > v$).
  \item Add a vertex labeled $n$ and connect it to a uniformly
    chosen vertex of~$\mathcal{T}$.
  \item Relabel vertices according to a uniform permutation of
    $\Set{1, \ldots, n}$.
\end{enumerate}
}

\begin{proof}[Proof of Theorem~\ref{thmConstructionUniformTrees}]
In the UA construction, let $\FnUA$ denote the forest
obtained after the addition of $n - 1$ vertices, before their relabeling.
After this, the $n$-th vertex will be linked to a uniformly chosen vertex
of $\FnUA$. As a result, to prove the theorem it suffices to show that
$\FnUA$ has the same law as the forest obtained from
$\mathcal{T}$ by removing its decreasing edges.

To do so, we couple $\FnUA$ and $\mathcal{T}$ in such a way
that the edges of $\FnUA$ are exactly the increasing edges
of $\mathcal{T}$. Formally, $\FnUA$ is a deterministic function of the random
vector $\mathbf{U} = (U_n(2), \ldots, U_n(n - 1))$. Moreover, $\mathbf{U}$ is
uniform on the set
\[
  \mathscr{S}^\star_{n - 1} \;=\;
  \Set{\mathbf{u} \in \Set{1, \ldots, n}^{\Set{2, \ldots, n - 1}} \suchthat \, 
       u_\ell \neq \ell}\,.
\]
Thus, to end the proof it is sufficient to find a bijection $\Phi$ from
$\mathscr{S}^\star_{n - 1}$ to the set of rooted trees on $\Set{1, \ldots, n-1}$
and such that
\[
  k\ell \in \FnUA(\mathbf{u}) \;\iff\;
  k\ell \text{ is an increasing edge of } \Phi(\mathbf{u}) \,.
\]

First, let
\[
  \mathscr{S}_{n - 1} \;=\; \Set{1, \ldots, n - 1}^{\Set{2, \ldots, n - 1}}
\]
and consider the bijection
$\Theta : \mathscr{S}^{\star}_{n - 1} \to \mathscr{S}_{n - 1}$ defined by
\[
  \Theta\mathbf{u} : \ell \mapsto u_\ell - \Indic{u_\ell > \ell} \,.
\]
Importantly, note that $\Theta$ does not modify the entries of $\mathbf{u}$ that
correspond to edges of $\FnUA(\mathbf{u})$, that is, for all $k < \ell$,
\[
  k\ell \in \FnUA(\mathbf{u}) \;\iff\;
  u_\ell = k \;\iff\;
  (\Theta\mathbf{u})(\ell) = k \,.
\]
As a result, it remains to find a bijection $\Psi$ from
$\mathscr{S}_{n - 1}$ to the set of rooted trees on $\Set{1, \ldots, n-1}$
such that 
\[
  u_\ell < \ell \;\iff\; u_\ell \text{ and } \ell
  \text{ are linked by an increasing edge in }\Psi(\mathbf{u}) \,.
\]
This bijection will essentially be that used in~\cite{Egecioglu1986}, which can
itself be seen as a variant of Joyal's bijection~\cite{Joyal1981, AignerZiegler2018}. 

Let $\mathscr{G}_\mathbf{u}$ be the directed graph on
$\Set{1, \ldots, n - 1}$ obtained by putting a directed edge going from
$u_\ell$ to $\ell$ for all $\ell \geq 2$.

If $\mathscr{G}_\mathbf{u}$ has no cycle or self-loop, then it is a tree.
Moreover, the orientation of its edges uniquely identify vertex~1 as its root.
Thus we set $\Psi(\mathbf{u}) = \mathscr{G}_\mathbf{u}$.

If $\mathscr{G}_\mathbf{u}$ is not a tree, set $\mathscr{C}_0 = \Set{1}$ and
let $\mathscr{C}_1, \ldots, \mathscr{C}_k$ denote the cycles of
$\mathscr{G}_\mathbf{u}$, taken in increasing order of their largest
element and treating self-loops as cycles of length~1.
Note that because each vertex has exactly one incoming edge, except
for vertex~1 which has none, these cycles are vertex-disjoint and directed.

\begin{figure}[h!]
  \centering
  \includegraphics[width=0.55\linewidth]{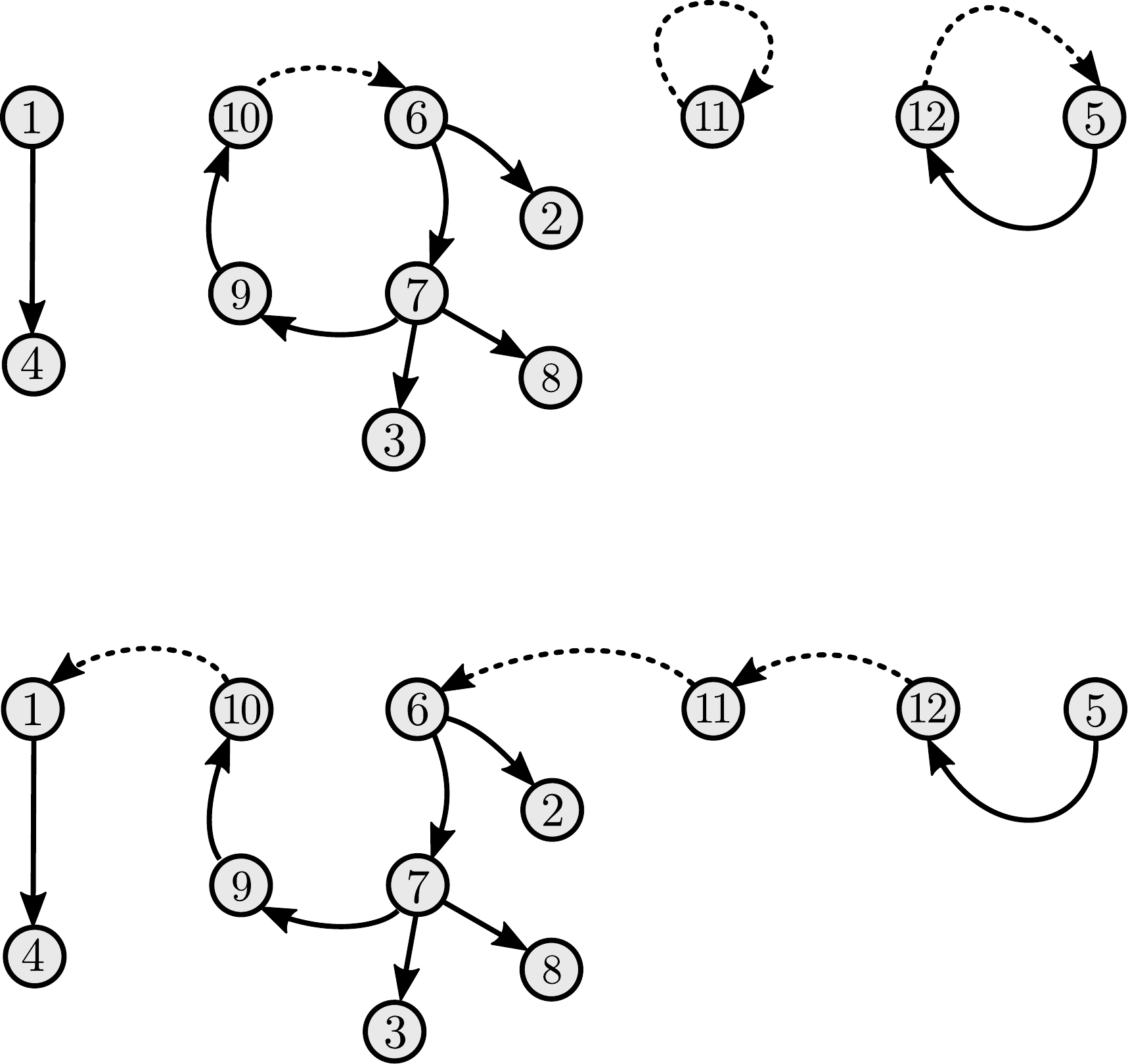}
  \caption{Example of construction of $\Phi(\mathbf{u})$, for 
  $\mathbf{u} = (7, 8, 1, 13, 11, 6, 7, 7, 9, 12, 5)$. Applying $\Theta$ yields
  $\mathbf{u}' = \Theta\mathbf{u} = (6, 7, 1, 12, 10, 6, 7, 7, 9, 11, 5)$.
  The directed graph $\mathscr{G}_{\mathbf{u}'}$ encoding $\mathbf{u}'$
  is represented on top. Its cycles are $\mathscr{C}_1 = (10, 6, 7, 9)$,
  $\mathscr{C}_2 = (11)$ and $\mathscr{C}_3 = (12, 5)$, and we set
  $\mathscr{C}_0 = (1)$. The edges $\vec{m_is_i}$ are dashed. Rewiring
  them as described in the main text turns
  $\mathscr{G}_{\mathbf{u}'}$ into the rooted tree $\Psi(\mathbf{u}')$
  represented on bottom. No information is lost when turning the cycles 
  $(1)(10, 6, 7, 9)(11)(12, 5)$ into the path going from $5$ to $1$
  encoded by the word $(1,10,6,7,9,11,12,5)$, because
  the left-to-right maxima of that word -- here $1$, $10$,
  $11$ and $12$ -- each mark the start of a new cycle.
  } \label{figExample}
\end{figure}

To turn $\mathscr{G}_\mathbf{u}$ into a tree, set $s_0 = 1$ and for $i \geq 1$
let $m_i$ denote the largest element of $\mathscr{C}_i$ and
$\vec{m_is_i}$ its out-going edge in $\mathscr{C}_i$. With this notation,
for $i = 1, \ldots, k$ remove the edge $\vec{m_is_i}$ from
$\mathscr{G}_\mathbf{u}$ and replace it by $\vec{m_is_{i - 1}\!\!\!\!\!}\,\,\,\,\,$. Note that
\begin{itemize}
  \item This turns $\mathscr{C}_0 \sqcup \cdots \sqcup \mathscr{C}_k$
    into a directed path $\mathscr{P}$ going from $s_k$ to 1.
  \item Because $m_i = \max \mathscr{C}_i$ and that $1 < m_1 < \cdots < m_k$,
    every edge $\vec{m_is_i}$ was non-increasing and has been
    replaced by the decreasing edge $\vec{m_is_{i - 1}\!\!\!\!\!}\,\,\,\,\,$.
\end{itemize}
Therefore, this procedure turns $\mathscr{G}_\mathbf{u}$ into a tree
$\Psi(\mathbf{u})$ rooted in $s_k$, without modifying its increasing edges.
Consequently, the increasing edges of $\Psi(\mathbf{u})$ are exactly the pairs
$k\ell$ for which $k = u_\ell < \ell$.

To see that $\Psi$ is a bijection, it suffices to note that
the cycles $\mathscr{C}_0, \ldots, \mathscr{C}_k$ can be recovered unambiguously
from the path $\mathscr{P}$ going from the root to vertex~1.
Indeed, writing this path as the word ${1 m_1 \cdots s_1 m_2 \cdots s_k}$, the
$m_i$ are exactly the left-to-right maxima of that word.

Setting $\Phi = \Psi \circ \Theta$ thus gives us the bijection that we were
looking for, concluding the proof.
\end{proof}

\section{Degrees} \label{secDegrees}

\subsection{Degree of a fixed vertex} \label{secDegreeFixed}

Using the UA construction and the notation from Section~\ref{secUAC}, let us
denote~by
\begin{itemize}
  \item $I_\ell = \Indic{U_n(\ell) < \ell}$ the indicator variable of the event
    ``the $\ell$-th vertex has an incoming edge linking it to a previously added vertex''.
  \item $X_\ell^{(v)} = \Indic{U_n(\ell) = \sigma^{-1}(v)}$ the indicator
    variable of the event ``the $\ell$-th vertex is linked to vertex $v$''.
  \item $B_v = \sigma^{-1}(v)$ the step of the construction at which vertex $v$
    is added.
\end{itemize}
With this notation, the degree of vertex~$v$ is
\[
  D_n^{(v)} \,=\;
  I_{B_v} \;+ \sum_{\ell = B_v + 1}^n \!\!\! X_{\ell}^{(v)},
\]
\change{where $I_{B_v}$ is the in-degree, and $\sum_{\ell = B_v + 1}^n\! X_{\ell}^{(v)}$ is the out-degree of vertex $v$.}
Moreover, conditional on $\{B_v=b\}$, $(X_{b+1}^{(v)}, \ldots, X_n^{(v)})$ are i.i.d.\ Bernoulli variables with
parameter ${1/(n - 1)}$ and $I_b$ is a Bernoulli variable with parameter
$\frac{b - 1}{n-1}$ that is independent of
$\smash{(X_{b + 1}^{(v)}, \ldots, X_{n}^{(v)})}$. As a result, conditional on $B_v$
and writing $L_v$ for $n - B_v$,
\[
  D_n^{(v)} \,\overset{d}{=}\;
  \mathrm{Ber}\mleft(1 - \tfrac{L_v}{n - 1}\mright) \;+\;
  \mathrm{Bin}\mleft(L_v,\, \tfrac{1}{n - 1}\mright) \,, 
\]
where the Bernoulli and the binomial variables are independent conditional
on~$L_v$ (here, as in similar expressions in the rest of this
document, the $\mathrm{Ber}$ and $\mathrm{Bin}$ notation refers to random
variables with the corresponding distribution, not the distributions
themselves).  Using that $L_v$ is uniformly distributed on $\Set{0, \ldots, n -
1}$, the mean, variance and probability generating function of $D_n^{(v)}$ are
obtained by routine calculations.

\begin{proposition} \label{propLawDegree}
Let $D_n$ be the degree of a fixed vertex of $\MF{n}$. Then,
\begin{mathlist}
  \item $\Expec{D_n} = 1$.
  \item $\Var{D_n} = \dfrac{2(n - 2)}{3(n - 1)}$.
  \item $\displaystyle G_{D_n}(z) \defas \Expec*{}{z^{D_n}} =
    \frac{1}{n} \sum_{\ell = 0}^{n - 1}
    \mleft(1 + (1 - \tfrac{\ell}{n - 1})(z - 1)\mright)
    \mleft(1 + \tfrac{1}{n - 1}(z - 1)\mright)^\ell$.
  \item[\textup{(iii')}] \label{item:GDn-2} $\displaystyle G_{D_n}(z) = 
    2\mleft(1 - \frac{1}{n}\mright)
    \frac{\mleft(1 + \frac{z - 1}{n - 1}\mright)^n - 1}{z\;-\;1} - 1$.
\end{mathlist}
\end{proposition}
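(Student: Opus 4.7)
The plan hinges on the mixture representation
\[
  D_n^{(v)} \mid L_v \;\overset{d}{=}\; \mathrm{Ber}\bigl(1 - \tfrac{L_v}{n-1}\bigr) \;+\; \mathrm{Bin}\bigl(L_v,\, \tfrac{1}{n-1}\bigr)
\]
established just before the statement, with the two summands independent conditional on $L_v$ and with $L_v$ uniform on $\Set{0, \ldots, n-1}$. The conditional mean is $(1 - L_v/(n-1)) + L_v/(n-1) = 1$ irrespective of $L_v$, giving (i) at once.

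For (ii), I would apply the law of total variance. Since the conditional expectation is constant, $\Var{D_n} = \Expec{\Var{D_n \mid L_v}}$, and by conditional independence $\Var{D_n \mid L_v}$ is the sum of the Bernoulli variance $\tfrac{L_v}{n-1}(1 - \tfrac{L_v}{n-1})$ and the binomial variance $\tfrac{L_v}{n-1}\cdot\tfrac{n-2}{n-1}$. Substituting $\Expec{L_v} = (n-1)/2$ and $\Expec{L_v^2} = (n-1)(2n-1)/6$ then reduces everything to arithmetic.

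For (iii), I would plug in the PGFs of the Bernoulli and binomial and use the conditional independence to write
\[
  \Expec{z^{D_n} \mid L_v = \ell} \;=\; \bigl(1 + (1 - \tfrac{\ell}{n-1})(z-1)\bigr)\bigl(1 + \tfrac{z-1}{n-1}\bigr)^{\ell},
\]
then average over the uniform law of $L_v$ on $\Set{0, \ldots, n-1}$.

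The only step requiring some care is deriving (iii') from (iii). Setting $y = 1 + (z-1)/(n-1)$, the factor $1 + (1 - \tfrac{\ell}{n-1})(z-1)$ rewrites as $z - \ell(y-1)$, so that
\[
  n\, G_{D_n}(z) \;=\; z\sum_{\ell=0}^{n-1} y^\ell \;-\; (y-1)\sum_{\ell=0}^{n-1} \ell\, y^\ell.
\]
Both sums are standard: the geometric series $\sum_{\ell=0}^{n-1} y^\ell = (y^n-1)/(y-1)$, and $\sum \ell\, y^\ell$ obtained as $y(d/dy)$ of the geometric series. Using the relation $z = (n-1)(y-1) + 1$ to trade $z$ for $y$ --- in particular $z+n = (n-1)y + 2$ and $z+y = ny - (n-2)$ --- the numerator telescopes to $2(y^n-1) - n(y-1)$, giving $G_{D_n}(z) = 2(y^n-1)/[n(y-1)] - 1$, which is the form stated in (iii') since $(n-1)(y-1) = z-1$. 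I expect this algebraic consolidation to be the main, though entirely mechanical, obstacle.
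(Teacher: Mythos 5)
Your proposal is correct and follows exactly the route the paper intends: it starts from the representation $D_n \mid L_v \overset{d}{=} \mathrm{Ber}(1 - L_v/(n-1)) + \mathrm{Bin}(L_v, 1/(n-1))$ established immediately before the statement and carries out what the paper dismisses as ``routine calculations''. The law of total variance for (ii), the mixture of conditional PGFs for (iii), and the geometric-series manipulation with $y = 1 + (z-1)/(n-1)$ for (iii') all check out.
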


\begin{remark}
\change{Note that, conditional on $L_v$, the probability that $v$ has a (unique) incoming edge is $1-\frac{L_v}{n-1}$, and its mean out-degree is $\frac{L_v}{n-1}$.}
Therefore, summing the two we have $\Expec*{}{D_n^{(v)} \given L_v} = 1$, that is,
the average degree of a vertex is independent of the step at which it was
added in the UA construction.
\end{remark}

\begin{proposition} \label{propLimitLawDegree}
The degree $D_n$ of a fixed vertex of $\MF{n}$ converges in distribution to the
variable $D$ satisfying:
\begin{mathlist}
  \item $\displaystyle D \sim \mathrm{Ber}(1 - U) + \mathrm{Poisson}(U)$, where
     $U$ is uniform on $\ClosedInterval{0, 1}$ and the Bernoulli and Poisson
     variables are independent conditional on $U$.
  \item $\displaystyle G_{D}(z) \defas \Expec{z^D}
     = \int_0^1\!\big(1 + (1 - x)(z - 1)\big)e^{x(z - 1)} dx
     = 2\,\frac{e^{z - 1} - 1}{z - 1} - 1$.
  \item For all $\displaystyle p \geq 1$,
    $\displaystyle\Expec*{\big}{D(D - 1)\cdots(D - p + 1)} = \frac{2}{p + 1}$.
  \item $\Prob{D = 0} = 1 - 2/e$ and, for $k \geq 1$,
    \[
      \Prob{D = k} \;=\; \frac{2}{e}\sum_{j > k} \frac{1}{j!} \, .
    \]
\end{mathlist}
\end{proposition}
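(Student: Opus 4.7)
The plan is to derive the four items in the order (ii) $\to$ (i) $\to$ (iii) $\to$ (iv), using the closed form of $G_{D_n}$ given in Proposition~\ref{propLawDegree}(iii') as the starting point. For each fixed $z$, $(1 + (z-1)/(n-1))^n \to e^{z-1}$ and $1 - 1/n \to 1$ as $n \to \infty$, so
\[
  G_{D_n}(z) \;\tendsto{n\to\infty}\; 2\,\frac{e^{z-1}-1}{z-1} - 1
\]
pointwise on $\ClosedInterval{0,1}$. Since the limit equals $1$ at $z=1$, by the continuity theorem for probability generating functions $D_n$ converges in distribution to a random variable $D$ whose PGF is the limit above, which proves the second equality of~(ii).

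To obtain the Bernoulli--Poisson representation of~(i) and the integral form of~(ii), let $\tilde{D} = \mathrm{Ber}(1-U) + \mathrm{Poisson}(U)$ with the two variables conditionally independent given $U \sim \mathrm{Unif}(\ClosedInterval{0,1})$. Conditioning on $U$ yields
\[
  \Expec{z^{\tilde D} \given U} \;=\; \big(1 + (1-U)(z-1)\big)\, e^{U(z-1)},
\]
so $G_{\tilde D}(z) = \int_0^1 \big(1 + (1-u)(z-1)\big)\, e^{u(z-1)}\, du$, which is the first expression in~(ii). Splitting this integrand into $e^{u(z-1)} - (z-1)\, u\, e^{u(z-1)}$ and computing $\int_0^1 u\, e^{u(z-1)}\, du$ by one integration by parts reduces the integral to $2(e^{z-1}-1)/(z-1) - 1 = G_D(z)$, giving $\tilde D \overset{d}{=} D$ and hence~(i).

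For~(iii) and~(iv), start from the power series
\[
  G_D(z) \;=\; 1 + 2\sum_{k \ge 1} \frac{(z-1)^k}{(k+1)!},
\]
obtained from $(e^w - 1)/w = \sum_{k \ge 0} w^k/(k+1)!$ with $w = z-1$. Since $\Expec{D(D-1)\cdots(D-p+1)} = G_D^{(p)}(1)$, differentiating $p$ times and evaluating at $z=1$ immediately yields $2\,p!/(p+1)! = 2/(p+1)$, which is~(iii). For~(iv), expand $(z-1)^k$ by the binomial theorem and collect the coefficient of $z^j$ in $G_D(z)$. Using $\int_0^1 u^{m+j}\,du = 1/(m+j+1)$ and $\sum_{m\ge0}(-u)^m/m! = e^{-u}$, this coefficient rearranges, for $j \ge 1$, to
\[
  \frac{2}{j!} \sum_{m \ge 0} \frac{(-1)^m}{m!\,(m+j+1)} \;=\; \frac{2}{j!} \int_0^1 u^j e^{-u}\, du.
\]
Iterating integration by parts in $I_j := \int_0^1 u^j e^{-u}\, du$ gives the recursion $I_j = j\, I_{j-1} - 1/e$, solved by $I_j = (j!/e) \sum_{k > j} 1/k!$, whence $\Prob{D = j} = (2/e)\sum_{k > j} 1/k!$. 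The case $j = 0$ is separate because of the additive constant $-1$ in $G_D$: it yields $\Prob{D = 0} = 2\,I_0 - 1 = 1 - 2/e$.

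The main obstacle is the bookkeeping in~(iv): correctly rearranging the double sum produced by the binomial expansion, recognizing the series $\sum_{m \ge 0} (-1)^m / (m!\,(m+j+1))$ as the incomplete gamma integral $I_j$, and then converting $I_j$ into the stated tail sum of $1/k!$. The remaining items are routine once the limiting PGF is available.
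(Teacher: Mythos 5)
Your proof is correct. Items (i)--(iii) are handled essentially as in the paper: pointwise convergence of the PGF from Proposition~\ref{propLawDegree}(iii'), matching the PGF of the Bernoulli--Poisson mixture against the integral form, and differentiating the power series $G_D(z) = 1 + 2\sum_{k\ge1}(z-1)^k/(k+1)!$ $p$ times at $z=1$.

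Where you diverge is item (iv). The paper derives $\Prob{D=k} = \frac{2}{k!}\int_0^1 x^k e^{-x}\,dx$ directly from the Bernoulli--Poisson representation~(i): it writes $\Prob{D=k}$ as $\frac{1}{k!}\int_0^1 (kx^{k-1}-kx^k+x^{k+1})e^{-x}\,dx$ and then observes that the integrand equals $2x^k e^{-x} + \frac{d}{dx}\big((x^k - x^{k+1})e^{-x}\big)$, the total-derivative term vanishing after integrating over $[0,1]$. You instead go through the power series: you binomially expand $(z-1)^k$, collect $[z^j]G_D(z)$ to get $\frac{2}{j!}\sum_{m\ge0}\frac{(-1)^m}{m!\,(m+j+1)}$, and recognize this alternating series as $\frac{2}{j!}\int_0^1 u^j e^{-u}\,du$ via $1/(m+j+1)=\int_0^1 u^{m+j}\,du$. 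The two routes meet at the same intermediate formula, and the recursion you get for $I_j = \int_0^1 u^j e^{-u}\,du$, namely $I_j = jI_{j-1}-1/e$, is exactly the paper's $\Prob{D=k+1}=\Prob{D=k}-\frac{2}{e(k+1)!}$ in disguise. Your route is a bit more mechanical (explicit double-sum rearrangement, with an interchange of sum and integral that is easy to justify on $[0,1]$), while the paper's is shorter once one spots the total-derivative decomposition; neither is clearly superior. One small polish: your route does not use (i) at all for (iv), which makes the logical dependency slightly cleaner, whereas the paper's proof of (iv) genuinely relies on (i).
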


\begin{proof}
First, for all $z \in \C\setminus\Set{1}$,
\[
  G_{D_n}(z) \;=\; 2\mleft(1 - \frac{1}{n}\mright)
    \frac{\mleft(1 + \frac{z - 1}{n - 1}\mright)^n - 1}{z\;-\;1} - 1
  \;\tendsto{n\to\infty}\;  2\,\frac{e^{z - 1} - 1}{z - 1} - 1.
\]
This pointwise convergence of the probability generating function of $D_n$
proves the convergence in distribution of $D_n$ to a random variable $D$
satisfying~(ii). Point~{(i)} then follows immediately from the integral
expression of $G_D$.

To compute the factorial moments of $D$, note that
\[
  G_D(z) \;=\; 2 \sum_{k \geq 0} \frac{(z - 1)^k}{(k + 1)!} \;-\; 1.
\]
As a result, for $p \geq 1$ the $p$-th derivative of $G_D$ is
\[
  G_D^{(p)}(z) \;=\; 2 \sum_{k \geq 0} \frac{(z - 1)^{k}}{(k + 1 + p)k!} \,,
\]
and, in particular, $\Expec{D(D - 1)\cdots(D - p + 1)} = G_D^{(p)}(1) =
\frac{2}{p + 1}$, proving (iii).

Finally, to prove (iv), using (i) we see that
\[
  \Prob{D = 0} \;=\; \int_0^1 x e^{-x} \, dx \;=\; 1 - \frac{2}{e}
\]
and that, for $k \geq 1$,
\[
  \Prob{D = k} \;=\; \frac{1}{k!} \int_0^1
      \mleft(k x^{k - 1} - k x^{k} + x^{k + 1}\mright) \,e^{-x}\, dx.
\]
Noting that $\big(k x^{k - 1} - k x^{k} + x^{k + 1}\big)e^{-x} = 2x^ke^{-x}+\frac{d}{dx}\mleft((x^{k}-x^{k+1})e^{-x}\mright)$, we get
\[
  \Prob{D = k}=\frac{2}{k!}\int_0^1x^ke^{-x}\,dx,
\] 
and an easy integration by parts yields
\[
  \Prob{D = k + 1} \;=\; \Prob{D = k} \;-\; \frac{2}{e(k + 1)!} \,, 
\]
from which (iv) follows by induction.
\end{proof}

Before closing this section, let us give an asymptotic equivalent of the tail
of $D_n$. We will need it in the proof of Theorem~\ref{thmMaxDegree} on the
largest degree.

\begin{proposition} \label{propTailDegree}
Let $D_n$ be the degree of a fixed vertex of $\MF{n}$ and let $D$ have the
asymptotic distribution of $D_n$. 
\begin{mathlist}
\item For all $k \geq 1$,
  \[
    \frac{2/e}{(k + 1)!} \;\leq\; \Prob{D \geq k} \;\leq\; 
    \mleft(1 + \frac{1}{k}\mright)^2 \frac{2/e}{(k + 1)!}\,.
  \]
\item For all $K_n = o(\sqrt{n})$, there exists $\epsilon_n = o(1)$ such
  that, for all $k \leq K_n$,
  \[
    \Abs{\Prob{D_n \geq k} - \Prob{D \geq k}}
    \;\leq\; \epsilon_n\, \Prob{D \geq k} \,.
  \]
\item For all $k_n \to + \infty$ and $K_n \geq k_n$ such that
  $K_n = o(\sqrt{n})$,
  \[
    \Prob{D_n \geq k} \;\sim\; \frac{2/e}{(k + 1)!}\,, 
  \]
  uniformly in $k$ such that $k_n \leq k \leq K_n$.
\end{mathlist}
\end{proposition}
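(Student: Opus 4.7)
For \emph{part}~(i), my plan is to compute $\Prob{D \geq k}$ directly from the PMF of $D$ given in Proposition~\ref{propLimitLawDegree}(iv). Summing $\Prob{D = k'} = \frac{2}{e}\sum_{j > k'}\frac{1}{j!}$ over $k' \geq k$ and swapping the order of summation yields $\Prob{D \geq k} = \frac{2}{e}\sum_{i \geq 1}\frac{i}{(k+i)!}$. The lower bound is just the $i = 1$ term. For the upper bound, I would use the crude estimate $(k+i)! \geq (k+1)!(k+2)^{i-1}$ to write $\sum_{i \geq 1}\frac{i}{(k+i)!} \leq \frac{1}{(k+1)!}\sum_{i \geq 1}\frac{i}{(k+2)^{i-1}} = \frac{(k+2)^2}{(k+1)^2(k+1)!}$, which is at most $(1+1/k)^2/(k+1)!$ since $k(k+2) \leq (k+1)^2$.

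\emph{Part}~(ii) is the main step. Rather than attempt a total-variation bound on $\mathrm{Bin}(\ell,1/(n-1))$ versus $\mathrm{Poi}(\ell/(n-1))$ --- which would give only an absolute error of order $1/n$ --- I would work with factorial moments. For any nonnegative integer-valued $X$ with finite factorial moments $\mu_j^X \defas \Expec{X(X-1)\cdots(X-j+1)}$, expanding the PGF around $z = 1$ and applying the identity $\sum_{r=0}^{m}(-1)^r\binom{i}{r} = (-1)^m\binom{i-1}{m}$ gives the tail formula
\[
\Prob{X \geq k} \;=\; \sum_{\ell \geq 0}\frac{(-1)^\ell\binom{k+\ell-1}{k-1}}{(k+\ell)!}\,\mu_{k+\ell}^X.
\]
Proposition~\ref{propLimitLawDegree}(iii) gives $\mu_j^D = 2/(j+1)$, and extracting the $w^j$-coefficient of $G_{D_n}(1+w)$ from Proposition~\ref{propLawDegree}(iii') yields
\[
\mu_j^{D_n} \;=\; \mu_j^D\,\prod_{i=1}^{j-1}\Big(1 - \frac{i}{n-1}\Big),
\]
which vanishes for $j \geq n$. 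Taking absolute values,
\[
\big|\Prob{D_n \geq k} - \Prob{D \geq k}\big| \;\leq\; \sum_{j \geq k}\frac{\binom{j-1}{k-1}}{j!}\,\big|\mu_j^D - \mu_j^{D_n}\big|,
\]
and I would split this sum at $j = \lfloor\sqrt n\rfloor$. For $k \leq j \leq \sqrt n$, the bound $1 - \prod_{i=1}^{j-1}(1 - i/(n-1)) \leq j(j-1)/(2(n-1))$ gives $|\mu_j^D - \mu_j^{D_n}| \leq \mu_j^D \cdot j^2/(2(n-1))$; inserting this and simplifying via $\binom{j-1}{k-1}/j! = 1/((k-1)!(j-k)!\,j)$ bounds the resulting contribution by $\frac{e}{(k-1)!(n-1)}$. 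For $j > \sqrt n$, the factor $1/(j-k)!$ with $j-k > \sqrt n - K_n$ makes the remainder super-polynomially small. Combined with the lower bound $\Prob{D \geq k} \geq (2/e)/(k+1)!$ from~(i), this yields a \emph{relative} error of order $k^2/n$, which is $o(1)$ uniformly for $k \leq K_n = o(\sqrt n)$.

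Finally, \emph{part}~(iii) is immediate from (i) and (ii): for $k \geq k_n \to \infty$, part~(i) gives $\Prob{D \geq k} = \frac{2/e}{(k+1)!}(1 + o(1))$ uniformly, while part~(ii) gives $\Prob{D_n \geq k} = \Prob{D \geq k}(1 + o(1))$ uniformly for $k \leq K_n$, and composing proves the claim. The main obstacle is in~(ii): one really needs a \emph{relative}, not absolute, error bound, and the naive Le Cam estimate would fall short in the delicate range $\log n/\log\log n \ll k \ll \sqrt n$. The factorial-moment route uncovers the cancellation $\mu_j^{D_n}/\mu_j^D = \prod_{i=1}^{j-1}(1-i/(n-1))$ that produces relative error $O(j^2/n)$ in each moment and is ultimately what matches the full range $K_n = o(\sqrt n)$.
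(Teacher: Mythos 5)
Your proof is correct, and its core mechanism for part~(ii) is the same as the paper's: compare the coefficients in the expansion of the tail-difference generating function around $z=1$, which is equivalent to comparing factorial moments. Indeed, the paper defines $\Delta_n(z) = \sum_{i\geq 0}(\Prob{D\geq i}-\Prob{D_n\geq i})z^i = (1+\tfrac{1}{z-1})(G_D(z)-G_{D_n}(z))$, expands it around $z=1$ with coefficients $A(n,i) = (i+1)(\mu_i^D-\mu_i^{D_n})$, and bounds $\Delta_n^{(k)}(0) = k!\,(\Prob{D\geq k}-\Prob{D_n\geq k})$ -- which, upon reindexing, is exactly your tail-from-factorial-moments identity $\Prob{X\geq k}=\sum_{j\geq k}\frac{(-1)^{j-k}\binom{j-1}{k-1}}{j!}\mu_j^X$. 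Where your presentation genuinely differs is in the bookkeeping, and it is arguably cleaner: you isolate the multiplicative relation $\mu_j^{D_n}/\mu_j^D = \prod_{i=1}^{j-1}(1-\tfrac{i}{n-1})$, which makes the per-moment relative error $O(j^2/n)$ explicit; you split the sum at a single scale $\sqrt n$ rather than introducing the paper's auxiliary intermediate sequence $J_n$ with $K_n = o(J_n) = o(\sqrt n)$; and you end up with a quantitative relative-error rate $O(k^2/n)$ (plus a super-polynomially small remainder), whereas the paper only establishes the qualitative $\epsilon_n=o(1)$. For part~(i), your geometric-series bound $(k+i)!\geq(k+1)!(k+2)^{i-1}$ replaces the paper's telescoping identity $\tfrac{1}{(\ell+1)!}\leq\tfrac{1}{\ell\cdot\ell!}-\tfrac{1}{(\ell+1)(\ell+1)!}$, but both give the same constant $(1+1/k)^2$. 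Part~(iii) is handled identically. One small point worth making explicit if you write this up: the equality $\Prob{X\geq k}=\sum_{j\geq k}\frac{(-1)^{j-k}\binom{j-1}{k-1}}{j!}\mu_j^X$ requires justification of the series rearrangement, which holds here because $G_D$ is entire (and $G_{D_n}$ is a polynomial), so the factorial-moment series converges absolutely.
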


\begin{proof}
First, observe that
\[
  \frac{1}{(\ell + 1)!} \;\leq\;
  \frac{1}{\ell \cdot \ell!} - \frac{1}{(\ell + 1)\cdot (\ell + 1)!}\,, 
\]
so that
\[
  \sum_{\ell > i} \frac{1}{\ell !} \;\leq\; \frac{1}{i\cdot i!}
  \;=\;\mleft(1 + \frac{1}{i}\mright) \frac{1}{(i + 1)!}\,.
\]
Recalling from Proposition~\ref{propLimitLawDegree} that
\[
  \Prob{D \geq k} \;=\;
  \frac{2}{e}\sum_{i \geq k} \sum_{\ell > i}\frac{1}{\ell!}\,,
\]
point (i) follows readily.

The proof of (ii) is somewhat technical so we only outline it here and refer
the reader to Section~\ref{appEndProofTailDegree} of the Appendix for the
detailed calculations.

Consider the function
\[
  \Delta_n(z) \;=\;
  \sum_{i \geq 0} \big(\Prob{D \geq i} - \Prob{D_n \geq i}\big) z^i.
\]
With this function, (ii) can be re-expressed as
\[
  \Delta_n^{(k)}(0) \;=\; \frac{\epsilon_n}{k + 1} \quad
  \text{for all } k \leq K_n = o(\sqrt{n}),
\]
where $\Delta_n^{(k)}$ denotes the $k$-th derivative of $\Delta_n$.
But $\Delta_n$ can be expressed in terms of the generating functions of
$D$ and $D_n$, namely as
\[
  \Delta_n(z) \;=\; \mleft(1 + \frac{1}{z - 1}\mright)
  \big(G_D(z) - G_{D_n}(z)\big).
\]
The expressions of $G_D$ and $G_{D_n}$ obtained in
Propositions~\ref{propLawDegree} and~\ref{propLimitLawDegree} thus make it
straightforward to obtain a power series expansion of $\Delta_n$ at $z = 1$,
and this expansion can be used to bound $\Delta_n^{(k)}(0)$ and conclude the
proof.

Finally, (iii) is a direct consequence of (i) and (ii). 
\end{proof}

\subsection{Largest degree} \label{secMaxDegree}

The aim of this section is to prove Theorem~\ref{thmMaxDegree} concerning
the largest degree of~$\MF{n}$, i.e.\ to show that
\[
  D_n^{\mathrm{max}} =\; \frac{\log n}{\log\log n} \;+\;
  \big(1 + o_\mathrm{p}(1)\big)\, \frac{\log n\,\log\log\log n}{(\log\log n)^{2}},
\]
where $o_\mathrm{p}(1)$ denotes a sequence of random variables
that goes to $0$ in probability and $D_n^{\mathrm{max}} = \max_{v}
D_n^{(v)}$.

Our proof is a standard application of the first and second moment method and
it implies that, asymptotically, $D_n^{\mathrm{max}}$ behaves like the maximum of
$n$ independent random variables distributed as~$D$.  As is typically the case
with this method, the main difficulty -- and therefore the bulk of the proof --
consists in bounding the asymptotic dependency between two fixed vertices
of~$\MF{n}$.

Because the first and second moment method part of our reasoning will also be
used in the proof of Theorem~\ref{thm:largestTree} concerning the size of the
largest tree, we isolate it as a lemma, whose proof we recall for the sake
of completeness.

\begin{lemma} \label{lemmaMomentsMethod}
For all integers $n$, let $(X_n^{(1)}, \ldots, X_n^{(n)})$ be a vector of
exchangeable random variables and
\[
  X_n^{\max} = \max\Set{X_n^{(i)} \suchthat i = 1, \ldots, n} \, .
\]
Write $p_n(k)$ for $\Prob*{\normalsize}{X_n^{(i)} \geq k}$, and suppose that
there exists a sequence $(m_n)$ and a constant~$\beta$ such that, for all
$\epsilon > 0$, as $n \to \infty$,
\begin{mathlist}
\item $n p_n((\beta + \epsilon) m_n) \to 0$.
\item $n p_n((\beta - \epsilon) m_n) \to +\infty$.
\item $\Prob*{\big}{X_n^{(1)} \geq (\beta - \epsilon)m_n,\,
  X_n^{(2)} \geq (\beta - \epsilon)m_n} \sim p_n((\beta - \epsilon)m_n)^2$.
\end{mathlist}
Then for all $\epsilon>0$,
\[
  \Prob*{\big}{X_n^{\max} \geq (\beta + \epsilon)\, m_n} \to 0
  \quad \text{and}\quad
  \Prob*{\big}{X_n^{\max} \geq (\beta - \epsilon) m_n} \to 1,
\]
which can also be written
\[
  X_n^{\max} = \big(\beta + o_{\mathrm{p}}(1)\big)\, m_n,
\]
where $o_{\mathrm{p}}(1)$ denotes a sequence of random variables that goes to
$0$ in probability.
\end{lemma}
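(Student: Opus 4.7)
The plan is to combine a straightforward first-moment upper bound with a second-moment lower bound, which is the classical two-sided moment method recipe; all three hypotheses are tailored precisely so that this recipe works.

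For the upper bound, I would fix $\epsilon>0$, set $k_n^+ = (\beta+\epsilon)m_n$, and apply the union bound together with exchangeability:
\[
  \Prob*{\big}{X_n^{\max} \geq k_n^+} \;\leq\; \sum_{i=1}^n \Prob*{\big}{X_n^{(i)}\geq k_n^+} \;=\; n\,p_n(k_n^+),
\]
which tends to $0$ by hypothesis~(i). This already gives the first of the two probabilistic conclusions.

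For the lower bound, I would fix $\epsilon>0$, set $k_n^- = (\beta-\epsilon)m_n$, and introduce the counting variable
\[
  Z_n \;=\; \sum_{i=1}^n \Indic{X_n^{(i)} \geq k_n^-},
\]
so that $\{X_n^{\max} \geq k_n^-\} = \{Z_n \geq 1\}$. By exchangeability, $\Expec{Z_n}=n\,p_n(k_n^-)$ and
\[
  \Expec{Z_n^2} \;=\; n\,p_n(k_n^-) + n(n-1)\,\Prob*{\big}{X_n^{(1)}\geq k_n^-,\,X_n^{(2)}\geq k_n^-}.
\]
Using hypothesis~(iii) to replace the pairwise probability by $p_n(k_n^-)^2(1+o(1))$, and then subtracting $\Expec{Z_n}^2 = n^2 p_n(k_n^-)^2$, one obtains
\[
  \Var{Z_n} \;\leq\; n\,p_n(k_n^-) \;+\; o(1)\cdot \big(n\,p_n(k_n^-)\big)^2.
\]
Dividing by $\Expec{Z_n}^2$ and invoking hypothesis~(ii) (which forces $n\,p_n(k_n^-)\to\infty$), both terms vanish. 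The Paley–Zygmund (or Chebyshev) inequality
\[
  \Prob{Z_n = 0} \;\leq\; \frac{\Var{Z_n}}{\Expec{Z_n}^2}
\]
then gives $\Prob{X_n^{\max} \geq k_n^-} \to 1$, which is the second conclusion.

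The only nontrivial step is the second-moment estimate, and even there the work is encapsulated in hypothesis~(iii); so there is no substantive obstacle beyond keeping careful track of the $o(1)$ terms. The final rewriting $X_n^{\max} = (\beta+o_{\mathrm{p}}(1))m_n$ is immediate, since for every $\epsilon>0$ both events $\{X_n^{\max} > (\beta+\epsilon)m_n\}$ and $\{X_n^{\max} < (\beta-\epsilon)m_n\}$ have probability tending to $0$.
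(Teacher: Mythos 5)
Your proof is correct and follows essentially the same route as the paper's: union bound plus exchangeability for the upper tail, and the second moment method applied to the counting variable $Z_n$ for the lower tail. The only cosmetic difference is that you pass through $\operatorname{Var}(Z_n)/\mathbb{E}[Z_n]^2$ via Chebyshev, while the paper uses the Cauchy--Schwarz form $\mathbb{P}(Z_n>0)\ge \mathbb{E}[Z_n]^2/\mathbb{E}[Z_n^2]$; these are equivalent formulations of the same argument.
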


\begin{proof}
First,
\begin{align*}
  \Prob*{\big}{X_n^{\max} \geq (\beta + \epsilon) m_n}
  = &\; \Prob{\bigcup_{i = 1}^n \Set{X_n^{(i)} \geq (\beta + \epsilon) m_n}} \\
  \leq &\; np_n\big((\beta + \epsilon) m_n\big) \,, 
\end{align*}
which goes to zero by~(i). Now, denote by
\[
  Z_n = \sum_{i = 1}^n \Indic*{\{X_n^{(i)} \geq\, (\beta - \epsilon) m_n\}}
\]
the number of variables $X_n^{(i)}$ that are greater than or equal to
$(\beta - \epsilon) m_n$.
Using the Cauchy--Schwartz inequality, we have $\Expec{Z_n}^2=\Expec{Z_n\Indic{Z_n>0}}^2 \leq\Expec{Z_n^2}\Prob{Z_n>0}$ so
\[
  \Prob*{\big}{X_n^{\max} \geq (\beta - \epsilon) m_n} \;=\;
  \Prob{Z_n > 0} \;\geq\; \frac{\Expec{Z_n}^2}{\Expec{Z_n^2}} \,.
\]
Moreover,
\begin{align*}
  \Expec{Z_n^2} \;&=\; n p_n\big((\beta - \epsilon) m_n\big) \\
  &+\; n(n - 1) \Prob{X_n^{(1)} \geq (\beta - \epsilon) m_n, \,
  X_n^{(2)} \geq (\beta - \epsilon) m_n} \,, 
\end{align*}
and so, by (ii) and (iii), $\Expec{Z_n}^2 / \Expec{Z_n^2} \to 1$ as
$n \to \infty$.
\end{proof}

\begin{remark} \label{rem:three-prime}
Note that under assumption (ii) of this lemma, for any $\epsilon>0$,
letting $n\to\infty$ in $\Expec{Z_n}^2 / \Expec{Z_n^2} \leq 1$ shows that
\[
  p_n((\beta-\epsilon)m_n)^2 \;\leq\;
    \Prob{X_n^{(1)} \geq (\beta - \epsilon) m_n,\,
          X_n^{(2)} \geq (\beta - \epsilon) m_n}(1+o(1)).
\]
Therefore, to prove (iii) it suffices to show
\begin{itemize}
  \item[(iii')] $\displaystyle \Prob{X_n^{(1)} \geq (\beta - \epsilon) m_n,\,
                X_n^{(2)} \geq (\beta - \epsilon) m_n} \;\leq\; p_n((\beta-\epsilon)m_n)^2(1+o(1)).$ \qedhere
\end{itemize}
\end{remark}
\bigskip

We now turn to the proof of Theorem~\ref{thmMaxDegree}.

\begin{proof}[Proof of Theorem~\ref{thmMaxDegree}]

\change{Instead of proving the theorem directly for the degree random variables
$(D_n^{(1)}, \ldots, D_n^{(n)})$, we prove it for the out-degrees, which we denote by $(\tilde{D}_n^{(1)}, \ldots, \tilde{D}_n^{(n)})$ and whose maximum has the same asymptotic behavior as $D_n^{\max}$.  The point in doing this is that
the tails of the out-degrees are less correlated than
those of the variables~$D_n^{(v)}$, making it easier to study their maximum
by the first and second moment method.}
  
Remember from Section~\ref{secDegreeFixed} that, in the UA construction,
\[
  D_n^{(v)} =\;
  I_{B_v} \;+ \sum_{\ell = B_v + 1}^n \!\!\! X^{(v)}_\ell, 
\]
where $B_v$ is the step at which vertex~$v$ was added,
$X^{(v)}_\ell$ is the indicator of ``the $\ell$-th vertex is linked to
vertex~$v$'', and $I_\ell$ is the indicator of ``the $\ell$-th vertex is
linked to a previously added vertex''. With this notation, let
\[
  \tilde{D}_n^{(v)} =
  \sum_{\ell = B_v + 1}^n \!\!\! X^{(v)}_\ell,
\]
denote the out-degree of vertex $v$,
and set $\tilde{D}_n^{\max} = \max\Set*{\tilde{D}_n^{(v)} : v = 1, \ldots, n}$.
Since $\tilde{D}_n^{\max}$ and $D_n^{\max}$ differ by at most~1, for any 
$m_n \to +\infty$,
\[
  D_n^{\max} -\, \tilde{D}_n^{\max} \;=\; o_\mathrm{p}(m_n) \, ,
\]
i.e.\ $(D_n^{\max} -\, \tilde{D}_n^{\max})/m_n$ goes to 0 in probability. Thus, to 
prove the theorem we apply Lemma~\ref{lemmaMomentsMethod} to the variables
\[
  \mleft(\tilde{D}_n^{(1)} - \frac{\log n}{\log\log n}, \ldots,\;
  \tilde{D}_n^{(n)} - \frac{\log n}{\log\log n}\mright)\,, 
\]
with $m_n = (\log n) (\log\log\log n) / (\log\log n)^2$ and $\beta = 1$.

Using Proposition~\ref{propTailDegree} and Stirling's formula, we see that
for any $k_n = o(\sqrt{n})$,
\[
  \log\big(\Prob{D_n \geq k_n}\big) \;=\;
   - k_n \log k_n \;+\; k_n \;+\; O(\log k_n)\,.
\]
Writing $\tilde{D}_n$ to refer to the common distribution of the
variables~$\tilde{D}_n^{(v)}$, since
\[
  \Prob*{\big}{D_n \geq k_n + 1} \;\leq\;
  \Prob{\tilde{D}_n \geq k_n} \;\leq\;
  \Prob*{\big}{D_n \geq k_n} \,,
\]
we also have
\[
  \log\big(\Prob{\tilde{D}_n \geq k_n}\big) \;=\;
   - k_n \log k_n \;+\; k_n \;+\; O(\log k_n)\,.
\]
In particular, for $k_n = (\log n) / (\log\log n) + \gamma \, m_n$ with
\[
  m_n = \frac{\log n \, \log\log\log n}{(\log\log n)^2} \,, 
\]
this gives
\begin{equation} \label{eqLogTailDtilde}
  \log\big(\Prob{\tilde{D}_n \geq k_n}\big) \;=\; - \log n \;-\;
  \big(\gamma - 1\big)\, \frac{\log n \;  \log\log\log n}{\log\log n} \;+\;
  O\mleft(\frac{\log n}{\log\log n}\mright)
\end{equation}

\pagebreak 

As a result, for all $\epsilon > 0$,
\begin{mathlist}
  \item $n\,\Prob*{\Big}{\tilde{D}_n - \dfrac{\log n}{\log\log n} \geq (1 + \epsilon) m_n} \to 0$.
  \item $n\,\Prob*{\Big}{\tilde{D}_n - \dfrac{\log n}{\log\log n} \geq (1 - \epsilon) m_n} \to +\infty$.
\end{mathlist}
Thus, to apply Lemma~\ref{lemmaMomentsMethod} and
finish the proof it suffices to show that
\[
  \Prob{\tilde{D}_n^{(1)} \geq k_n,\,
        \tilde{D}_n^{(2)} \geq k_n} \;\sim\;
  \Prob{\tilde{D}_n \geq k_n}^2
\]
whenever $k_n = (\log n) / (\log\log n) + (1 - \epsilon) m_n$.
More precisely, using Remark~\ref{rem:three-prime} it is sufficient to show that
\[
  \Prob{\tilde{D}_n^{(1)} \geq k_n,\,
    \tilde{D}_n^{(2)} \geq k_n} \;\leq\;
  \Prob{\tilde{D}_n \geq k_n}^2 \,+\, o\Big(\Prob{\tilde{D}_n \geq k_n}^2\Big).
\]
First let us fix $b_1\neq b_2\in\{1,\ldots,n\}$.
Conditional on $\{B_1=b_1,B_2=b_2\}$, recall that the variables $(X^{(2)}_\ell,\, b_2+1\leq \ell\leq n)$ are independent Bernoulli variables with parameter $1/(n - 1)$.
By further conditioning on the variables $X^{(1)}_\ell$, the independence of $(X^{(2)}_\ell,\, b_2+1\leq \ell\leq n)$ still holds but their distribution is changed.
Indeed, choose $(x_\ell, \,\ell \neq b_1)\in\{0,1\}^{n-1}$ and consider the event
\[
  A \defas \Set{B_1=b_1,B_2=b_2,\; \forall\ell\neq b_1, X^{(1)}_{\ell} = x_{\ell}}.
\]
Then by construction, for all $\ell\notin\{b_1,b_2\}$, we have
  \[
  \Prob{X^{(2)}_\ell=1 \given A} = \begin{cases}
  0 &\text{ if } x_\ell = 1\\
  \frac{1}{n-2} &\text{ if } x_\ell = 0.
  \end{cases}
\]
Consequently $X^{(2)}_\ell$ is always stochastically dominated by a Bernoulli($\frac{1}{n-2}$) random variable, and so we bound the distribution of $\tilde{D}_n^{(2)}=\sum_{\ell>b_2}X^{(2)}_\ell$ conditional on~$A$~by
\[
  \big(\tilde{D}_n^{(2)} \;\big|\; A\big) \;\overset{d}{\leq}\; \text{Binomial}\left(n-b_2,\frac{1}{n-2}\right).
\]
To get a bound on the distribution of $\tilde{D}_n^{(2)}$ conditional on $\tilde{D}_n^{(1)}=i$ for some $i$, first note that summing over all configurations $b_1,b_2,(x_\ell,\ell\neq b_1)$ such that $\sum_{\ell>b_1} x_\ell=i$ gives
\[
  \big(\tilde{D}_n^{(2)} \;\big|\; B_1=b_1,B_2=b_2,\,\tilde{D}_n^{(1)}=i\big) \;\overset{d}{\leq}\; \text{Binomial}\left(n-b_2,\frac{1}{n-2}\right).
\]
Let us now write for conciseness $L_1=n-B_1$ and $L_2=n-B_2$.
Note that $L_2$ is not independent of $\Set*{\tilde{D}_n^{(1)} = i}$
because they are linked by $L_1$. Indeed, $L_1$ is positively
correlated to $\tilde{D}_n^{(1)}$ and we always have $L_2 \neq L_1$.
Nevertheless, since conditional on $L_1$, $L_2$ is independent of
$\tilde{D}^{(1)}_n$ and uniform on $\Set{0, \ldots, n - 1} \setminus L_1$, we
have the following stochastic ordering:
\[
  \big(L_2 \;\big|\; B_1=b_1,\,\tilde{D}_n^{(1)}=i\big)
  \;\;\overset{d}{\leq}\;\;
  \altoverline{L}{2}\,, 
\]
where $\altoverline{L}{2}$ is uniformly distributed on $\Set{1, \ldots, n - 1}$.
Summing over $b_1$ and $b_2$, we thus get
\[
  \big(\tilde{D}_n^{(2)} \;\big|\;\tilde{D}_n^{(1)}=i\big) \;\overset{d}{\leq}\; \text{Binomial}\left(\altoverline{L}{2},\frac{1}{n-2}\right).
\]
Let us define a random variable $M_n\sim\text{Bin}\left(\altoverline{L}{2},\frac{1}{n-2}\right)$.
As the previous bound is uniform in $i$, we have
\[
  \Prob{\tilde{D}_n^{(2)} \geq k_n\,
    \given \tilde{D}_n^{(1)} \geq k_n} \;\leq\;
  \Prob{M_n \geq k_n}.
\]
To conclude, it is sufficient to show that $\Prob{M_n \geq k_n} \sim \Prob{\tilde{D}_n \geq k_n}$ since this implies 
\[
  \Prob{\tilde{D}_n^{(1)} \geq k_n,\,\tilde{D}_n^{(2)} \geq k_n} \;\leq\;
  \Prob{\tilde{D}_n \geq k_n}\Prob{M_n \geq k_n} \sim \Prob{\tilde{D}_n \geq k_n}^2.
\]
For this, define on the same probability space as the variables $\altoverline{L}{2}$ and $M_n$ the variable
\[
  \altunderline{L}{2} \defas \altoverline{L}{2}\Indic{\altoverline{L}{2}\leq n-2}.
\]
$\altunderline{L}{2}$ is then uniformly distributed on $\Set{0,\ldots,n-2}$, and we have the equality in distribution
\[
  M_n\Indic{\altoverline{L}{2}\leq n-2} \;\overset{d}{=}\; \tilde{D}_{n-1} \;\sim\; \text{Binomial}\left(\altunderline{L}{2},\frac{1}{n-2}\right).
\]
As the two variables $M_n$ and $M_n\Indic{\altoverline{L}{2}\leq n-2}$ differ on an event of probability no greater than $1/(n-1)$, we have
\[
  \Prob{M_n \geq k_n} = \Prob{\tilde{D}_{n-1} \geq k_n} +O\big(\tfrac{1}{n}\big),
\]
and finally \eqref{eqLogTailDtilde} with $\gamma=(1-\epsilon)$ allows us to conclude that this expression is indeed equivalent to $\Prob{\tilde{D}_{n} \geq k_n}$.
\end{proof}

\section{Tree sizes} \label{secTreeSizes}

In this section, we study the size of the trees composing the Moran forest.
Section~\ref{secSizeUnifTree} is concerned with the typical size of these
trees, while Section~\ref{secMaxTree} focuses on the asymptotics of the size
of the largest tree. But before going any further we need to introduce a
process that will play a central role throughout the rest of this paper.

\subsection{A discrete-time Yule process} \label{secUpsilon}

Let $\Upsilon_n = (\Upsilon_n(\ell),\, \ell \ge 0)$ be the pure birth
Markov chain defined by $\Upsilon_n(0) = 1$ and the following transition
probabilities:
\[
  \Prob*{\big}{\Upsilon_n(\ell+1) = j \given \Upsilon_n(\ell) = i} = 
  \begin{cases}
    \frac{i}{n-1} &\text{ if $j = i+1$}\\
    1-\frac{i}{n-1} &\text{ if $j = i$},
  \end{cases}
\]
and stopped when reaching $n$.

The reason why this process will play an important role when studying the
trees of $\MF{n}$ is the following: let $\Ti^{(v)}_n$ denote the tree
containing $v$, and $\tilde{\Ti}^{(v)}_n$ the subtree descending from $v$ in
the UA construction -- that is, letting $m(v)$ denote the mother of $v$ and
$\Ti^{(v)}_n \setminus \{vm(v)\}$ the forest obtained by removing the edge
between $v$ and $m(v)$ from $\Ti^{(v)}_n$ (if that edge existed),
$\tilde{\Ti}^{(v)}_n$ is the tree of $\Ti^{(v)}_n \setminus \{vm(v)\}$
containing~$v$. Recalling that $L_v$ denotes the number of steps after
vertex~$v$ was added in the UA construction and letting
$\tilde{T}^{(v)}_n = |\tilde{\Ti}^{(v)}_n|$ be the size of $\tilde{\Ti}^{(v)}_n$,
we have
\[
  \tilde{T}^{(v)}_n \;\overset{d}{=}\; \Upsilon_n(L_v)\,, 
\]
where $\Upsilon_n$ is independent of $L_v$. In particular, the size
of a tree created at step~${n - h}$ of the UA construction is
distributed as $\Upsilon_n(h)$.

In the rest of this section, we list a few basic properties of~$\Upsilon_n$
that will be used in subsequent proofs.

\begin{lemma} \label{lemmaDiscreteYuleExpect}
For all $0 \le \ell \le n-1$,
\[
  \Expec{\Upsilon_n(\ell)} \;=\; \mleft(1 + \frac{1}{n-1} \mright)^{\ell}\,.
\]
\end{lemma}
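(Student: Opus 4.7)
The plan is to prove this by a straightforward induction on $\ell$. The base case $\ell = 0$ is immediate from $\Upsilon_n(0) = 1$. For the inductive step, I would start by computing the one-step conditional expectation directly from the transition probabilities: given $\Upsilon_n(\ell) = i$,
\[
  \Expec*{\big}{\Upsilon_n(\ell+1) \,\big|\, \Upsilon_n(\ell) = i}
  \;=\; (i+1)\tfrac{i}{n-1} \;+\; i\bigl(1-\tfrac{i}{n-1}\bigr)
  \;=\; i\bigl(1 + \tfrac{1}{n-1}\bigr).
\]
Taking expectations in $\Upsilon_n(\ell)$ on both sides and applying the induction hypothesis then multiplies the previous value by the factor $1 + \tfrac{1}{n-1}$, which yields the claimed formula.

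The only minor subtlety is that the chain is stopped at $n$, so the formula above is valid only when $i \leq n-1$. However, since $\Upsilon_n$ starts at $1$ and increases by at most one per step, we have $\Upsilon_n(\ell) \leq \ell + 1$ deterministically. For any $\ell \leq n-2$ we therefore have $\Upsilon_n(\ell) \leq n-1$ almost surely, so the computation above applies without issue, allowing the induction to reach $\ell = n-1$, which is the full range stated in the lemma.

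Equivalently (and perhaps more elegantly), one can note that the above one-step identity says exactly that $M_\ell := \Upsilon_n(\ell)\big/(1 + \tfrac{1}{n-1})^\ell$ is a martingale on $\{0, \ldots, n-1\}$, and the lemma follows at once from $\Expec{M_\ell} = \Expec{M_0} = 1$. I do not anticipate any real obstacle here; the argument is a discrete analogue of the classical computation $\Expec{Y_t} = e^t$ for the continuous-time Yule process, and the growth factor $1 + \tfrac{1}{n-1}$ per step is precisely the discretization of the continuous rate.
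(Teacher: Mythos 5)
Your proof is correct and follows essentially the same route as the paper: compute the one-step conditional expectation from the transition probabilities, note it multiplies by $1+\frac{1}{n-1}$, and induct (the paper makes the same observation that $\Upsilon_n(\ell) < n$ a.s.\ for $\ell \le n-2$ to justify the step). The martingale reformulation you add at the end is a nice equivalent packaging but is not a different argument.
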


\begin{proof}
For $0 \le \ell < n-1$, we have $\Upsilon_n(\ell) < n$ almost surely, therefore we can write
\begin{align*}
  \Expec*{\big}{\Upsilon_n(\ell+1) \given \Upsilon_n(\ell)} \;
  &=\; \frac{\Upsilon_n(\ell)}{n-1}\big(\Upsilon_n(\ell) + 1\big) \;+\;
    \mleft(1-\frac{\Upsilon_n(\ell)}{n-1}\mright)\Upsilon_n(\ell) \\
  &=\; \Upsilon_n(\ell)\mleft(1+\frac{1}{n-1}\mright)\,, 
\end{align*}
and the result follows by induction.
\end{proof}

We now compare the discrete-time process $\Upsilon_n$ to the 
Yule process. By Yule process, we refer to the continuous-time
Markov chain $(Y(t),\,t\geq 0)$ that jumps from $i$ to $i + 1$ at rate~$i$
(see e.g.~\cite{Ross1995}, Section~5.3).

\begin{lemma} \label{lemmaYuleConvergence}
As $n \to \infty$,
\[
  \big(\Upsilon_n(\Floor{tn}),\, t \ge 0 \big) \implies \big(Y(t),\, t \ge 0\big),
\]
where ``$\implies$'' denotes convergence in distribution in the Skorokhod
space~\cite{Billingsley1999}, and ${(Y(t),\, t \ge 0)}$ is a Yule process.
\end{lemma}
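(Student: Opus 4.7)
The plan is to prove this by explicitly coupling the jump times of both processes and invoking the fact that Skorokhod convergence of pure jump processes follows from convergence of their jump times, provided the limit does not explode.

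First, I would give a jump time representation of both processes. For the Yule process $Y$, let $(E_i)_{i\geq 1}$ be i.i.d.\ $\mathrm{Exp}(1)$ variables and set $\tau_k = \sum_{i=1}^k E_i/i$; then $Y(t) = k$ iff $\tau_{k-1} \leq t < \tau_k$ (with $\tau_0 = 0$). For $\Upsilon_n$, conditional on being in state $i < n$, the number of steps before the next jump is geometric with parameter $p_i^{(n)} = i/(n-1)$. So let $(G_i^{(n)})_{1\leq i < n}$ be independent such geometrics and set $\tau_k^{(n)} = n^{-1}\sum_{i=1}^k G_i^{(n)}$; then $\Upsilon_n(\lfloor tn \rfloor) = k$ iff $\tau_{k-1}^{(n)} \leq t < \tau_k^{(n)}$ (up to hitting $n$).

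Second, I would couple the two families on a common probability space using uniform random variables: set $G_i^{(n)} = \lceil \log U_i / \log(1 - p_i^{(n)}) \rceil$ and $E_i = -\log U_i$ with $(U_i)$ i.i.d.\ $\mathrm{Unif}(0,1)$. Then $G_i^{(n)} \sim \mathrm{Geom}(p_i^{(n)})$, $E_i \sim \mathrm{Exp}(1)$, and since $-\log(1 - p_i^{(n)}) \sim i/n$, we get $G_i^{(n)}/n \to E_i/i$ almost surely for every fixed $i$. Summing up to any fixed $k$, this yields $\tau_k^{(n)} \to \tau_k$ a.s.

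Third, I would transfer this to Skorokhod convergence. Fix $T > 0$. Since $Y(T)$ is almost surely finite (the Yule process does not explode in finite time, as $\sum 1/i = \infty$), on the event $\{Y(T) = K\}$ only finitely many jump times $\tau_1, \ldots, \tau_K$ occur in $[0, T]$, and the a.s.\ convergence $\tau_k^{(n)} \to \tau_k$ for $k \leq K+1$ directly implies Skorokhod convergence of the piecewise constant paths on $[0,T]$ (take the time change $\lambda_n$ that is piecewise linear, sending $\tau_k$ to $\tau_k^{(n)}$). As $T$ is arbitrary, this gives convergence in $D([0,\infty))$.

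The only subtlety — and the main thing to check carefully — is that the truncation at $n$ does not interfere: on the event $\{Y(T) = K\}$, for $n$ large enough $\tau_K^{(n)} > T$ and moreover $K < n$, so the stopping rule of $\Upsilon_n$ is never triggered on $[0,T]$. Since $\Prob{Y(T) \leq K} \to 1$ as $K \to \infty$ for fixed $T$, this contributes a vanishing probability and the coupling argument goes through. I don't expect any other genuine difficulty; everything beyond the coupling is standard.
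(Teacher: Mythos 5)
Your proposal is correct and follows essentially the same route as the paper: reduce to convergence of the jump times, note that the inter-jump times of $\Upsilon_n$ are independent geometrics with parameter $i/(n-1)$, and that after rescaling by $n$ these converge to independent $\mathrm{Exponential}(i)$ variables, i.e.\ the inter-jump times of the Yule process. The paper stops after establishing convergence in distribution of the jump-time sequence; you additionally spell out an explicit coupling (giving a.s.\ convergence), the piecewise-linear Skorokhod time change, and the non-explosion/truncation check, which are just the routine details the paper leaves implicit.
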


\begin{proof}
Since both processes only have increments of +1, it suffices to prove that the
sequence of jump times of $(\Upsilon_n(\Floor{tn}),\, t \ge 0)$
converges in distribution to that of the Yule process. For $1 \le i \le n$, let
\[
  t_n(i) = \inf \Set*[\big]{\ell \ge 0 \suchthat \Upsilon_n(\ell) = i}
\]
be the jump times of the chain $\Upsilon_n$. By the strong Markov
property, the variables $(t_n(i+1) - t_n(i),\, 1 \le i \le n-1)$
are independent, and $t_n(i+1) - t_n(i) \sim \mathrm{Geometric}(\frac{i}{n-1})$.
Therefore,
\[
  \Big(\tfrac{1}{n}\big(t_n(i+1) - t_n(i)\big),\, 1 \le i \le n-1\Big)
  \;\tendsto[d]{n \to \infty}\;
  (\mathscr{E}(i),\, i \ge 1),
\]
where the variables $(\mathscr{E}(i),\, i \ge 1)$ are independent and
$\mathscr{E}(i) \sim \mathrm{Exponential}(i)$. This concludes the proof.
\end{proof}

\begin{lemma} \label{lem:bounds-on-upsilon}
For all integers $0\leq k\leq \ell \leq n-1$,
\[
  \Prob{Y\big(\tfrac{\ell-k+1}{n-1}\big) > k} \;\leq\;
  \Prob{\Upsilon_n(\ell) > k} \;\leq\;
  \Prob{Y\big(\lambda_n(k)\tfrac{\ell}{n-1}\big) > k},
\]
where
\[
  \lambda_n(k) = -\frac{n-1}{k} \log\mleft(1-\frac{k}{n-1}\mright).
\]
\end{lemma}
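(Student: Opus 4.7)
The plan is to realize both $\Upsilon_n$ and $Y$ through their successive jump times and to couple them via a common exponential family. Writing $t_n(k+1) = \sum_{i=1}^k G_i$ for the first time $\Upsilon_n$ reaches $k+1$, with $G_i \sim \mathrm{Geom}(i/(n-1))$ independent, and $\tau(k+1) = \sum_{i=1}^k E_i$ for the analogue for the Yule process, with $E_i \sim \mathrm{Exp}(i)$ independent, the two events of interest read $\{\Upsilon_n(\ell) > k\} = \{t_n(k+1) \leq \ell\}$ and $\{Y(t) > k\} = \{\tau(k+1) \leq t\}$.

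To couple the two, I would set $F_i := (i/\mu_i)\,E_i \sim \mathrm{Exp}(\mu_i)$ with $\mu_i := -\log(1-i/(n-1))$, and then $G_i := \lceil F_i \rceil$, which has the required $\mathrm{Geom}(i/(n-1))$ law. Since $F_i$ has a continuous distribution, almost surely $F_i \leq G_i < F_i + 1$, so summing,
\[
  \sum_{i=1}^k F_i \;\leq\; t_n(k+1) \;<\; \sum_{i=1}^k F_i \,+\, k .
\]
The whole task then reduces to controlling $\sum_{i=1}^k F_i = \sum_{i=1}^k (i/\mu_i)\, E_i$ in terms of $\tau(k+1) = \sum_{i=1}^k E_i$. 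For this I would use two elementary facts about the function $p \mapsto -\log(1-p)/p$ on $(0,1)$: it is $\geq 1$ (equivalently $\mu_i \geq i/(n-1)$, giving $i/\mu_i \leq n-1$), and it is increasing (so $i \mapsto i/\mu_i$ is decreasing, hence $i/\mu_i \geq k/\mu_k = (n-1)/\lambda_n(k)$ for every $i \leq k$). These two bounds sandwich $\sum_{i=1}^k F_i$ between $\tfrac{n-1}{\lambda_n(k)}\tau(k+1)$ and $(n-1)\tau(k+1)$.

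The upper bound in the lemma is then immediate: $t_n(k+1) \geq \sum F_i \geq \tfrac{n-1}{\lambda_n(k)}\tau(k+1)$ yields the inclusion $\{t_n(k+1) \leq \ell\} \subseteq \{\tau(k+1) \leq \lambda_n(k)\ell/(n-1)\}$. For the lower bound, the point is to exploit the strict inequality $t_n(k+1) < \sum F_i + k \leq (n-1)\tau(k+1) + k$ together with the integer-valuedness of $t_n(k+1)$: if $\tau(k+1) \leq (\ell-k+1)/(n-1)$ then $t_n(k+1) < \ell+1$, hence $t_n(k+1) \leq \ell$. Beyond this bit of bookkeeping between strict and non-strict inequalities, which is exactly what produces the $+1$ in $(\ell-k+1)/(n-1)$, I do not expect a real obstacle; the only genuinely quantitative input is the monotonicity of $p \mapsto -\log(1-p)/p$, which is what singles out the specific constant $\lambda_n(k)$.
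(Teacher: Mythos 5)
Your proof is correct, and it is a genuinely tighter packaging than the paper's argument. The paper proves the two bounds by two separate step-by-step Markov-chain couplings: for the upper bound it compares the transition probabilities of $\Upsilon_n$ with those of the embedded jump chain of a time-accelerated Yule process, and for the lower bound it invents a ``censored'' Yule process with a deterministic $1/(n-1)$ delay inserted after each birth. You instead realize both processes through their jump times from the start and couple these times directly by taking $G_i = \lceil F_i\rceil$ with $F_i\sim\mathrm{Exp}(\mu_i)$; the inequality $F_i\le G_i<F_i+1$ then delivers both bounds from a single coupling, with the $+k$ slack on the right producing the $\ell-k+1$ in the lower bound once you use integrality of $t_n(k+1)$. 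The quantitative ingredients are identical — both proofs ultimately rest on the monotonicity of $p\mapsto -\log(1-p)/p$, which is exactly what makes $\lambda_n(k)$ dominate $\lambda_n(i)$ for $i\le k$, equivalently $i/\mu_i\ge k/\mu_k$ — but your approach avoids introducing the auxiliary censored process and is, to my mind, the more natural way to see where both bounds come from. One small remark for completeness: the coupling $G_i=\lceil F_i\rceil$ requires $\mu_i<\infty$, i.e.\ $i\le n-2$; the boundary case $k=n-1$ (forcing $\ell=n-1$) makes both sides of the claimed inequalities equal to $0$ and $1$ respectively and is trivial, and $k=0$ is trivial as well, so this is not a gap, just worth a word.
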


\begin{proof}
Let us start with the upper bound, and write $\lambda\defas\lambda_n(k)$ for
simplicity. Note that, for all $t\geq 0$ and $i\geq 1$,
\[
  \Prob{Y\big(t+\tfrac{\lambda}{n-1}\big) = i \given Y(t) = i}
  = e^{-\frac{i\lambda}{n-1}},
\]
and that we have chosen $\lambda$ such that if $i\leq k$ then
\[
  e^{-\frac{i\lambda}{n-1}} \;\leq\;
  1 - \frac{i}{n-1} \;=\;
  \Prob*\big{\Upsilon_n(\ell+1) = i \given \Upsilon_n(\ell) = i}.
\]
Thus, until it reaches $k+1$ individuals, the process $\Upsilon_n$ is dominated
by the Markov chain $(Y(\frac{\lambda\ell}{n-1}),\, 0\leq \ell \leq n-1)$.
This shows that
\[
  \Prob{\Upsilon_n(\ell) > k} \;\leq\;
  \Prob{Y\big(\tfrac{\lambda\ell}{n-1}\big) > k}\,, 
\]
proving the second inequality of the lemma.

To prove the first inequality, we couple $\Upsilon_n$ with a ``censored''
Yule process~$Y_{\mathrm{c}}$. Intuitively, this censoring
consists in ignoring births that occur less than $1/(n - 1)$ unit of time
after another birth.

Formally, we define $Y_{\mathrm{c}}$ by specifying the sequence
$t_0 = 0 < t_1 < t_2 < \ldots$ of times corresponding to births in the population.
Let $(\mathscr{E}_i,\,i\geq 1)$ be an independent sequence of exponential random variables
where $\mathscr{E}_i \sim \mathrm{Exponential}(i)$. Set $t_0 = 0$ and, for each $i\geq 1$,
\begin{equation}
  t_i \;\defas\; \mathscr{E}_1+\sum_{j=2}^{i}\left(\frac{1}{n-1}+\mathscr{E}_j\right)
  \;=\; \frac{i-1}{n-1}+\sum_{j=1}^{i}\mathscr{E}_j.
\end{equation}
We now define, for all $t \geq 0$, 
\[
  Y_{\mathrm{c}}(t) \;\defas\; 1+\sum_{i\geq 1}\Indic{t_i \leq t}
  \;=\; \sum_{i\geq 1} i\,\Indic{t_{i-1}\leq t < t_i}.
\]

The censoring of the Yule process after birth events implies that for any time
$t\geq 0$, the random variable
$Y_{\mathrm{c}}(t+\frac{1}{n-1}) - Y_{\mathrm{c}}(t)$ takes values in $\{0,1\}$.
Furthermore, for any $i\in\N$,
\[
  \Prob*{\big}{Y_{\mathrm{c}}(t+\tfrac{1}{n-1}) = i+1 \given Y_{\mathrm{c}}(t) = i}
  \;\leq\; 1-e^{-\frac{i}{n-1}} \;\leq\; \frac{i}{n-1}.
\]
Therefore, we can couple $(\Upsilon_n(\ell),\,0\leq \ell\leq n-1)$ and
$(Y_{\mathrm{c}}(t),\,t\geq 0)$ in such a way that, for all $0\leq \ell \leq n-1$,
\[
  Y_{\mathrm{c}}\mleft(\tfrac{\ell}{n-1}\mright) \;\leq\; \Upsilon_n(\ell).
\]
Now, by construction, the sequence $(t_i - \frac{i-1}{n-1},\,i\geq 1)$ has the
distribution of the sequence of jump times of a Yule process.
Therefore,
\begin{align*}
  \Prob{\Upsilon_n(\ell) > k}
  \;&\geq\; \Prob{Y_{\mathrm{c}}\big(\tfrac{\ell}{n-1}\big) > k}\\
  \;&=\;    \Prob{t_k \le \tfrac{\ell}{n-1}}\\
  \;&=\;    \Prob{t_k-\tfrac{k-1}{n-1} \le \tfrac{\ell-k+1}{n-1}}\\
  \;&=\;    \Prob{Y\big(\tfrac{\ell-k+1}{n-1}\big) > k},
\end{align*}
which yields the lower bound of the lemma.
\end{proof}

\change{We now use the previous lemma to obtain the following result, which will be used to derive asymptotics for the tail probability of the size of a tree in the Moran forest.

\begin{proposition} \label{prop:tail-tree}
  Let $L$ be a uniform random variable on $\{0,\dots,n-1\}$, independent of the process $\Upsilon_n$.
  Then for any sequence of integers $k_n\to\infty$ with $k_n=o(\sqrt{n})$,
  \[
  \Prob{\Upsilon_n(L) > k_n} \sim \frac{e}{k_n}(1-e^{-1})^{k_n+1}.
  \]
\end{proposition}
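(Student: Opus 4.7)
The plan is to write
\[
\Prob{\Upsilon_n(L) > k_n} \;=\; \frac{1}{n}\sum_{\ell=0}^{n-1}\Prob{\Upsilon_n(\ell) > k_n},
\]
sandwich each summand via Lemma~\ref{lem:bounds-on-upsilon} using the explicit Yule tail, approximate the sum by an integral, and evaluate that integral by Laplace's method.

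Since $Y(t)$ (started from $1$) is geometric with parameter $e^{-t}$, one has $\Prob{Y(t) > k} = (1-e^{-t})^k$. Lemma~\ref{lem:bounds-on-upsilon} then gives, for $k_n \leq \ell \leq n-1$, the sandwich
\[
\big(1-e^{-(\ell-k_n+1)/(n-1)}\big)^{k_n} \;\leq\; \Prob{\Upsilon_n(\ell)>k_n} \;\leq\; \big(1-e^{-\lambda_n(k_n)\,\ell/(n-1)}\big)^{k_n},
\]
with both sides naturally vanishing when $\ell < k_n$. The next step is to show that averaging either side over $\ell\in\{0,\dots,n-1\}$ gives an expression asymptotically equivalent to
\[
I_{k_n} \;\defas\; \int_0^1 (1-e^{-x})^{k_n}\, dx.
\]
For the upper bound I would use $\lambda_n(k_n)=1+O(k_n/n)$ together with the elementary estimate $\bigl((1-e^{-\lambda a})/(1-e^{-a})\bigr)^{k_n} = 1 + O(k_n^2/n)$, uniform in $a\in(0,1]$, to reduce it to $\frac{1}{n}\sum_\ell (1-e^{-\ell/(n-1)})^{k_n}$ up to a $1+o(1)$ factor. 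For the lower bound I would re-index by $m=\ell-k_n+1$, obtaining a Riemann sum on $[0,1-k_n/n]$, and absorb the missing interval $[1-k_n/n,1]$ into a similar $O(k_n^2/n)$ relative error. The Riemann-sum error itself is bounded by $\sup|f'|/n = O(k_n(1-e^{-1})^{k_n-1}/n)$ with $f(x)=(1-e^{-x})^{k_n}$, which is again $o(I_{k_n})$ under $k_n=o(\sqrt{n})$.

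To evaluate $I_{k_n}$, substitute $u=1-e^{-x}$, so that $dx=du/(1-u)$:
\[
I_{k_n} \;=\; \int_0^{1-e^{-1}} \frac{u^{k_n}}{1-u}\,du.
\]
As $k_n\to\infty$ the integrand concentrates near the upper endpoint $u=1-e^{-1}$, where $1/(1-u)=e$. Writing $v=k_n(1-e^{-1}-u)$ and applying dominated convergence yields
\[
I_{k_n} \;\sim\; \frac{e\,(1-e^{-1})^{k_n+1}}{k_n},
\]
which combined with the sandwich gives the claim.

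The main obstacle is the uniform control of the two sandwich errors: the hypothesis $k_n=o(\sqrt{n})$ is precisely what makes the multiplicative factors $(1+O(k_n/n))^{k_n}$ reduce to $1+o(1)$, so that the Yule approximation of $\Upsilon_n$ and the Riemann-sum approximation both preserve the leading-order asymptotics. Any weaker assumption on $k_n$ would demand finer quantitative estimates on the gap between $\Upsilon_n$ and $Y$, which the coupling of Lemma~\ref{lem:bounds-on-upsilon} does not immediately provide.
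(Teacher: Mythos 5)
Your proposal is correct and follows essentially the same route as the paper: sandwich $\Prob{\Upsilon_n(\ell)>k_n}$ between Yule tails via Lemma~\ref{lem:bounds-on-upsilon}, pass from the sum over $\ell$ to an integral, and extract the Laplace asymptotics $\int_0^1(1-e^{-x})^{k_n}\,dx\sim \frac{e}{k_n}(1-e^{-1})^{k_n+1}$. The only organizational difference is that the paper absorbs all perturbations into the additive form $\int_0^1(1-e^{-x}+f_n(x))^{k_n}\,dx$ with $\sup|f_n|=o(1/k_n)$ and evaluates it once via Lemma~\ref{lem:integral-equivalent}, whereas you control the multiplicative $\lambda_n$-error and the Riemann-sum error separately before a final change of variable $u=1-e^{-x}$; both bookkeepings hinge on $k_n=o(\sqrt{n})$ in the same way, so they give the same conclusion.
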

}
\begin{proof}
  Using the upper bound in Lemma \ref{lem:bounds-on-upsilon}, we have
  \begin{align*}
  \Prob{\Upsilon_n(L)>{k_n}}
  \;&\leq\; \frac{1}{n}\sum_{\ell=0}^{n-1}
  \Prob{Y\big(\lambda_n(k)\tfrac{\ell}{n-1}\big)>k_n}\\
  &=\; \frac{n-1}{n}\int_{0}^{1}
  \Prob{Y\big(\lambda_n(k_n)\tfrac{\Floor{x(n-1)}}{n-1}\big)>k_n} \,dx
  \;+\; \frac{1}{n}\Prob{Y(\lambda_n(k_n)) > k_n}\\
  &\leq\; \int_{0}^{1} \Prob{Y(\lambda_n(k_n) x)>k_n} \,dx 
  \;+\; \frac{1}{n} (1-e^{-\lambda_n(k_n)})^{k_n}\\
  &=\; \int_{0}^{1} \big(1-e^{-\lambda_n(k_n) x}\big)^{k_n} dx
  \;+\; \frac{1}{n}(1-e^{-\lambda_n(k_n)})^{k_n}.
  \end{align*}
  Now recall that $\lambda_n(k_n) = -\frac{n-1}{{k_n}} \log\big(1-\frac{k_n}{n-1}\big) = 1 + O(\frac{k_n}{n})$, so uniformly in $x\in\ClosedInterval{0,1}$,
  \[
  e^{-\lambda_n(k_n) x} = e^{-x} + O\big(\frac{k_n}{n}\big).
  \]
  Since $k_n=o(\sqrt{n})$, we have $k_n/n=o(1/k_n)$ and thus Lemma~\ref{lem:integral-equivalent}
  from the Appendix gives
  \[
  \int_{0}^{1} \big(1-e^{-\lambda_n(k_n) x}\big)^{k_n} \,dx \sim \frac{e}{k_n}(1-e^{-1})^{k_n+1}.
  \]
  Elementary calculations show that when $k_n=o(\sqrt{n})$, we also have
  \[
  \frac{1}{n}(1-e^{-\lambda_n(k_n)})^{k_n} \sim \frac{1}{n}(1-e^{-1})^{k_n} = o\Big(\frac{(1-e^{-1})^{k_n}}{k_n}\Big).
  \]
  It remains to examine the lower bound in Lemma~\ref{lem:bounds-on-upsilon}.
  As above, we get an integral
  \begin{align*}
  \Prob{\Upsilon_n(L)>k_n} & \geq \frac{1}{n}\sum_{\ell=0}^{n-1} \Prob{Y\big(\tfrac{\ell-k_n+1}{n-1}\big)>k_n}\\
  &\geq \frac{n-1}{n}\int_{0}^{1}\Prob{Y\big(\tfrac{\Ceil{x(n-1)}-k_n}{n-1}\big)>k_n} \,dx\\
  &\geq \frac{n-1}{n}\int_{0}^{1}\Prob{Y\big(x-\tfrac{k_n}{n-1}\big)>k_n} \,dx.
  \end{align*}
  Since
  \[
  \Prob{Y\big(x-\tfrac{k_n}{n-1}\big)>k_n} = \big(1-\exp(-x+\tfrac{k_n}{n-1})\big)^{k_n} = \big(1-e^{-x}+O(k_n/n)\big)^{k_n},
  \]
  using Lemma \ref{lem:integral-equivalent} again, we get
  \[
  \Prob{\Upsilon_n(L)>k_n} \geq \frac{n-1}{n}\int_{0}^{1} \Prob{Y\big(x-\tfrac{k_n}{n-1}\big)>k_n} \, dx \sim \frac{e}{k_n}(1-e^{-1})^{k_n+1},
  \]
  which completes the proof.
\end{proof}

\subsection{Size of some random trees} \label{secSizeUnifTree}

In this section, we study the size of some typical trees
of $\MF{n}$. In particular, we study the asymptotics of the
size $T_n^{(1)}$ of the tree containing vertex~1 and of the size $T^U_n$ of
a tree sampled uniformly at random among the trees composing $\MF{n}$.
Our main result is the following theorem.

\begin{theorem} \label{thmAsymptoticTreeSize}
~
\begin{mathlist}
  \item Let $T^U_n$ be the size of a uniform tree of $\MF{n}$. Then,
    \[
      \Prob{T^U_n = k} \;\tendsto{n \to \infty}\;
      2 \int_0^1 x e^{-x}(1-e^{-x})^{k-1} dx,
    \]
    that is, ${T^U_n} \tendsto[d]{} T^U$ where $T^U \sim
    \mathrm{Geometric}(e^{-X})$, and $X \sim 2xdx$ on
    $\ClosedInterval{0, 1}$.
  \item Let $T^{(1)}_n$ be the size of the tree containing vertex~1 in $\MF{n}$.
    Then,
    \[
      \Prob{T_n^{(1)} = k} \tendsto{n \to \infty} k \int_0^1 x
      e^{-x}(1-e^{-x})^{k-1} dx,
    \]
    that is, $T_n^{(1)}$ converges in distribution to the size-biasing of
    $T^U$.
\end{mathlist}
\end{theorem}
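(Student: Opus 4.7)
The plan is to reduce both (i) and (ii) to computing $\Expec{N_n(k)}$, where $N_n(k)$ denotes the (random) number of trees of size~$k$ in $\MF{n}$, and then to pass to the limit using the convergence of $\Upsilon_n$ to the Yule process from Section~\ref{secUpsilon}.

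First I set up the two expressions. Since the Moran forest is obtained from $\MF{n}^*$ by a uniform relabeling, vertex~$1$ of $\MF{n}$ corresponds to a uniformly chosen vertex of $\MF{n}^*$, and since each tree of size~$k$ contains exactly $k$ vertices,
$$\Prob{T_n^{(1)} = k} \;=\; \frac{1}{n}\,\Expec{\sum_{v=1}^n \Indic{|\Ti_n^{(v)}|=k}} \;=\; \frac{k}{n}\,\Expec{N_n(k)}.$$
For (i), sampling a tree uniformly at random among the trees of $\MF{n}$ gives
$$\Prob{T_n^U = k} \;=\; \Expec{\frac{N_n(k)}{N_n}},$$
which I will compare to $2\Expec{N_n(k)}/n$ using concentration of~$N_n$.

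Next I compute $\Expec{N_n(k)}$. In the UA construction, vertex~$\ell$ is a root of $\MF{n}^*$ iff $I_\ell = 0$, an event of probability $(n-\ell)/(n-1)$ depending only on $U_n(\ell)$. The subtree descending from~$\ell$, of size $\tilde{T}_n^{(\ell)} \overset{d}{=} \Upsilon_n(n-\ell)$, depends only on $U_n(\ell+1),\dots,U_n(n)$; it is therefore independent of~$I_\ell$, and when $\ell$ is a root it coincides with the tree containing~$\ell$. Summing over~$\ell$ and setting $h = n-\ell$,
$$\Expec{N_n(k)} \;=\; \sum_{h=0}^{n-1}\frac{h}{n-1}\,\Prob{\Upsilon_n(h)=k}.$$
Lemma~\ref{lemmaYuleConvergence} gives $\Upsilon_n(\Floor{tn}) \Rightarrow Y(t)$ for each $t \in \ClosedInterval{0,1}$, and since these variables are integer-valued this implies $\Prob{\Upsilon_n(\Floor{tn}) = k} \to e^{-t}(1-e^{-t})^{k-1}$ for every fixed $k \geq 1$. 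Bounded convergence applied to the Riemann sum then yields
$$\frac{\Expec{N_n(k)}}{n} \;\longrightarrow\; \int_0^1 t\,e^{-t}(1-e^{-t})^{k-1}\,dt,$$
from which (ii) follows immediately.

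For (i), it remains to show that $\Expec{N_n(k)/N_n}$ and $2\Expec{N_n(k)}/n$ share the same limit. Using $N_n(k) \leq N_n$, a direct algebraic manipulation yields the deterministic bound
$$\Abs{\frac{N_n(k)}{N_n} - \frac{2N_n(k)}{n}} \;=\; \frac{N_n(k)}{N_n}\,\frac{|n - 2N_n|}{n} \;\leq\; 2\,\Abs{\frac{N_n}{n} - \frac{1}{2}},$$
and by Proposition~\ref{propNumberTrees}, $\Expec{|N_n/n - 1/2|} = O(n^{-1/2})$. Hence $\Expec{N_n(k)/N_n} = 2\Expec{N_n(k)}/n + o(1)$, which combined with the preceding display establishes~(i). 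The only real subtlety lies in this concentration argument for the random denominator $N_n$; everything else is a routine exploitation of the independence structure of the UA construction followed by the Yule limit for~$\Upsilon_n$.
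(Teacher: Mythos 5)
Your proposal is correct and takes essentially the same route as the paper: the same identity $\Prob{T_n^{(1)}=k}=\frac{k}{n}\sum_{h=0}^{n-1}\frac{h}{n-1}\Prob{\Upsilon_n(h)=k}$ (which the paper derives via Proposition~\ref{propBiasedDistribution} and you obtain by directly counting roots in the UA construction) passed to the limit by Lemma~\ref{lemmaYuleConvergence} and bounded convergence, and then (i) deduced from (ii) by concentration of $N_n$. One small refinement in your last step: by using $N_n(k)/N_n\leq 1$ rather than $N_n(k)/n\leq 1$ you only need $\Expec*{}{\Abs{N_n/n-\tfrac12}}\to 0$, which is immediate from the variance in Proposition~\ref{propNumberTrees}, sidestepping the Hoeffding-inequality tail estimate the paper invokes in Remark~\ref{remAsympSizeBias} to control $\Expec*{}{\Abs{n/N_n-2}}$.
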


\begin{remark} \label{remAsympSizeBias}
Note that even though the limit distribution of $T_n^{(1)}$ is the size-biased
limit distribution of $T^U_n$, for finite $n$ the distribution of $T_n^{(1)}$
is \emph{not} the size-biased distribution of~$T^U_n$.
\change{
Indeed, note that
\[
  \Prob{T_n^{(1)}=k}
  \;=\; \Expec*{\Big}{\sum_{\mathcal{T}\in \MF{n}}\Indic{\Abs{\mathcal{T}}=k}\Indic{1\in \mathcal{T}}}
  \;=\; \frac{k}{n}\,\Expec*{\Big}{\sum_{\mathcal{T}\in \MF{n}}\Indic{\Abs{\mathcal{T}}=k}},
\]
while
\[
  \Prob{T_n^{U}=k} \;=\; \Expec*{\Big}{\frac{1}{N_n}\sum_{\mathcal{T}\in \MF{n}}\Indic{\Abs{\mathcal{T}}=k}},
\]
where, as in Section \ref{secNumberTrees}, $N_n$ denotes the number of trees in $\MF{n}$.
However, these computations are enough to show that, in the limit, the size-biasing holds:
indeed, note that $n/N_n\to 2$ in probability by Proposition \ref{propNumberTrees}.
Furthermore, using for instance Hoeffding's inequality \cite{Hoeffding1963} to
control the deviation of $N_n$ from its mean, it is easy to show that
$n/N_n\to 2$ in $L^1$ as well, so that
\[
  \Abs{\Prob{T_n^{(1)}=k}-\tfrac{k}{2}\Prob{T_n^{U}=k}} \;\leq\; \frac{k}{2}\, \Expec{\Abs{\frac{n}{N_n}-2}} \to 0.
\]
This shows that points (i) and (ii) of Theorem~\ref{thmAsymptoticTreeSize} are equivalent.
}
\end{remark}

We start by giving the distribution of $T_n^{(1)}$ in terms of the
process~$\Upsilon_n$ defined in Section~\ref{secUpsilon}. For this, we first
need to introduce some notation.  Let $\mathscr{T}^{(v)}_n$ be the tree
containing vertex~$v$ in $\MF{n}$. We denote by $H^{(v)}_n$ the number of steps
after the root of $\mathscr{T}^{(v)}_n$ was added in the UA construction.
Recalling the notation from Section~\ref{secUAC}, where
$\sigma^{-1}(v) \in \Set{1, \ldots, n}$ denotes the step of the UA construction
at which vertex $v$ was added, we thus have
\[
  H^{(v)}_n \;=\;
  n - \min\Set{\sigma^{-1}(u) \suchthat u \in \mathscr{T}^{(v)}_n} \,.
\]

\begin{proposition} \label{propBiasedDistribution} ~
Let \,$T^{(1)}_n$ be the size of the tree containing vertex~$1$ in $\MF{n}$,
and denote by $H^{(1)}_n$ the number of steps after the root of that tree
was added in the UA construction. Then,
\begin{mathlist}
\item For $0 \le h \le n-1$, $\displaystyle \Prob*{}{H_n^{(1)} = h} = \frac{h}{n(n-1)}
  \mleft(1+\frac{1}{n-1}\mright)^h$.
\item Conditional on $\Set*{H_n^{(1)} = h}$, $T_n^{(1)}$ is distributed
  as the size-biasing of $\Upsilon_n(h)$.
\end{mathlist}
\end{proposition}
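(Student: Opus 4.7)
The plan is to work in the UA construction and condition on the step $R$ at which the root of the tree containing vertex~$1$ was added; since by definition $H_n^{(1)} = n - R$, both parts of the statement reduce to determining the joint law of $R$ and of the size of the tree rooted at step~$R$.

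The first observation is that, because the uniform relabeling permutation $\sigma$ is independent of the UA forest $\MF{n}^*$, conditional on $\MF{n}^*$ the vertex labeled~$1$ in $\MF{n}$ corresponds to the $\sigma^{-1}(1)$-th vertex of the UA construction, which is uniform on $\Set{1,\ldots,n}$. Writing $A_s$ for the event ``step~$s$ is a root of $\MF{n}^*$'' (equivalently, $U_n(s) > s$, an event of probability $\frac{n-s}{n-1}$) and $T_s$ for the size of the subtree descending from step~$s$ in $\MF{n}^*$, I would deduce the size-biased sampling identity
\[
  \Prob*{\big}{R = s \given \MF{n}^*} \;=\; \frac{T_s}{n}\,\Indic{A_s}.
\]

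The second ingredient is the identification of the growth dynamics of the subtree descending from step~$s$ with the chain~$\Upsilon_n$: at any step $\ell > s$, the newly added vertex joins this subtree iff $U_n(\ell)$ already lies in it, an event that, given the current size is $i$, has probability $\frac{i}{n-1}$, matching exactly the transitions of~$\Upsilon_n$. Consequently $T_s$ is distributed as $\Upsilon_n(n-s)$ and, depending only on $(U_n(\ell),\, \ell > s)$, is in fact independent of~$A_s$.

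Combining these two observations and applying Lemma~\ref{lemmaDiscreteYuleExpect} yields
\[
  \Prob{R = s} \;=\; \frac{1}{n}\,\Prob{A_s}\,\Expec{\Upsilon_n(n-s)} \;=\; \frac{n - s}{n(n-1)}\mleft(1 + \frac{1}{n-1}\mright)^{n-s},
\]
which is (i) after setting $h = n - s$. Running the same conditioning with an extra indicator $\Indic{T_s = k}$ gives
\[
  \Prob{T_n^{(1)} = k,\, H_n^{(1)} = h} \;=\; \frac{k}{n}\,\Prob{A_{n-h}}\,\Prob{\Upsilon_n(h) = k},
\]
and dividing by $\Prob{H_n^{(1)} = h}$ obtained just above yields $k\,\Prob{\Upsilon_n(h) = k}/\Expec{\Upsilon_n(h)}$, i.e.\ the size-biased distribution of $\Upsilon_n(h)$, proving~(ii). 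The only substantive step is the identification of the subtree dynamics with~$\Upsilon_n$, which has essentially already been noted in Section~\ref{secUpsilon}; the rest is bookkeeping together with one invocation of Lemma~\ref{lemmaDiscreteYuleExpect}.
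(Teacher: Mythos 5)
Your proposal is correct and follows essentially the same route as the paper's proof: both condition on the step at which the root of the tree containing vertex~$1$ was added, use that the size of a tree created $h$ steps before the end is $\Upsilon_n(h)$-distributed and independent of the event that the root was indeed a root, and use the uniform relabeling to pick up the size-biasing factor $i/n$. Your write-up is a touch more explicit about the conditional identity $\Prob{R=s\mid\MF{n}^*}=\frac{T_s}{n}\Indic{A_s}$ and the independence of $T_s$ from $A_s$, but this is the same argument the paper states more compactly.
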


\begin{remark} \label{remSizeBias}
  The size-biasing of $\Upsilon_n(h)$ can be easily represented as follows.
  Consider the Markov chain $\Upsilon^*_n = (\Upsilon^*_n(\ell),\, 0 \le \ell \le n-1)$ defined by $\Upsilon^*_n(0) = 1$ and the following transition
  probabilities:
  \[
  \Prob*{\big}{\Upsilon^*_n(\ell+1) = j \given \Upsilon^*_n(\ell) = i} = 
  \begin{cases}
  \frac{i+1}{n} &\text{ if $j = i+1$}\\
  1-\frac{i+1}{n} &\text{ if $j = i$}.
  \end{cases}
  \]
  A straightforward induction on $\ell$ shows that $\Upsilon^*_n(\ell)$ is distributed as the size-biasing of $\Upsilon_n(\ell)$.
\end{remark}

\begin{proof}
First, note that $H_n^{(1)} = h$ if and only if a
new tree is created at step ${n - h}$, and vertex~1 belongs to this tree.
Now, the probability that a new tree is created at step $n - h$ is 
$\frac{h}{n-1}$, and the size of this tree is then distributed as
$\Upsilon_n(h)$. Moreover, at the end of the UA construction the labels are
assigned to the vertices uniformly. As a result, conditional on a tree having
size~$i$, the probability that it contains vertex~1 is $i/n$. We thus have
\[
  \Prob{H_n^{(1)}= h,\, T_n^{(1)} = i} \;=\; \frac{h}{n-1}\cdot \frac{i}{n}\,
  \Prob{\Upsilon_n(h) = i}.
\]
Summing over $i$ and using Lemma~\ref{lemmaDiscreteYuleExpect} yields
\[
  \Prob{H_n^{(1)} = h} \;=\; \frac{h}{n(n-1)}\mleft(1+\frac{1}{n-1}\mright)^{h}.
\]
Finally,
\[
  \Prob{T_n^{(1)} = i \given H_n^{(1)} = h} = i\, \Prob*{\big}{\Upsilon_n(h) = i}
  \mleft(1+\frac{1}{n-1}\mright)^{-h},
\]
which concludes the proof.
\end{proof}

We can now turn to the proof of our main result.

\begin{proof}[Proof of Theorem~\ref{thmAsymptoticTreeSize}]
  \change{
  By Remark \ref{remSizeBias}, it is sufficient to prove (ii).
  Now from Proposition~\ref{propBiasedDistribution}, we can write
  \begin{align*}
    \Prob{T_n^{(1)}=k} &= 
    \frac{1}{n} \sum_{h = 0}^{n-1} \frac{h}{n-1}
    \Expec{\Upsilon_n(h) \Indic{\Upsilon_n(h)=k}}\\
    &= \frac{k}{n} \sum_{h = 0}^{n-1} \frac{h}{n-1} \Prob{\Upsilon_n(h)=k}.
  \end{align*}
  Therefore using Lemma~\ref{lemmaYuleConvergence}, we get
  \[
    \frac{k}{n} \sum_{h = 0}^{n-1} \frac{h}{n-1}\Prob{\Upsilon_n(h)=k}\tendsto{n \to \infty} k\int_0^1 x \Prob{Y(x)=k} dx,
  \]
  and recalling the well-known fact that $Y(x)$ has a $\textrm{Geometric}(e^{-x})$
  distribution (see for instance Section~5.3 in \cite{Ross1995}) concludes the proof.
  }
\end{proof}

\begin{remark}
  \change{
  If $H^U_n$ denotes the number of steps in the UA construction after
  the root of a uniformly chosen tree was added, one could give an
  alternative proof of Theorem~\ref{thmAsymptoticTreeSize} by
  showing that $H^U_n / n \to X$, and then using
  Lemma~\ref{lemmaYuleConvergence}.
  }
\end{remark}

\subsection{Size of the largest tree} \label{secMaxTree}

\change{
The goal of this section is to derive asymptotics for
$\Tmax_n\defas\max_{v}T_n^{(v)}$, the size of the largest tree in the Moran
forest on $n$ vertices, when $n\to\infty$. Namely, we show that
\[
  \Tmax_n = \alpha\big(\log n - (1+o_{\mathrm{p}}(1))\log\log n\big),
\]
where $\alpha = (1-\log(e-1))^{-1}$. Similarly to 
Theorem~\ref{thmMaxDegree} concerning the largest degree, this corresponds to
the maximum of $n/2$ independent $T^U_n$-distributed trees. Again the key
element to the proof is to control the asymptotic independence of two
distinct trees of $\MF{n}$.
}

As in Section~\ref{secUpsilon}, for any vertex $v$ let us define
$\tilde{\Ti}_n^{(v)}\subset \Ti_n^{(v)}$ as the subtree descending from $v$ in
the UA construction. For our purpose, it will be sufficient to study the size
$\tilde{T}_n^{(v)}\defas |\tilde{\Ti}_n^{(v)}|$ of those subtrees instead of
that of the trees $\Ti_n^{(v)}$. Indeed, observe that
\[
  \Tmax_n = \max_{v}\tilde{T}_n^{(v)},
\]
so that applying Lemma \ref{lemmaMomentsMethod} 
with $m_n=\alpha\log\log n$ and $\beta=-1$ to the exchangeable variables
$(\tilde{T}_n^{(1)}-\alpha\log n, \ldots, \tilde{T}_n^{(n)}-\alpha\log n)$
will prove the theorem.
Again, we omit the superscript and denote by $\tilde{T}_n$ a random variable
with distribution equal to that of $\tilde{T}_n^{(1)}$.

For the rest of the section, we thus study the tail probabilities of the
variable $\tilde{T}_n$.  Recall from the UA construction that the number $L$ of
steps after a fixed vertex was added is uniformly distributed on
$\Set{0,\ldots,n-1}$, and from Section~\ref{secUpsilon} that, conditional on
$\Set{L =\ell}$,
\[
  \tilde{T}_n \overset{d}{=} \Upsilon_n(\ell)\,.
\]
\change{As a consequence, applying directly Proposition \ref{prop:tail-tree} yields that for any sequence of integers $k_n\to\infty$ with $k_n=o(\sqrt{n})$,
\begin{equation} \label{eq:tail-tree}
  \Prob{\tilde{T}_n > k_n} \sim \frac{e}{k_n}(1-e^{-1})^{k_n+1}.
\end{equation}
}
Note that if $k_n$ is not integer-valued, then
\[
  \Prob{\tilde{T}_n > k_n}=\Prob{\tilde{T}_n > \Floor{k_n}}\sim \frac{e}{k_n}(1-e^{-1})^{\Floor{k_n}+1},
\]
which is not necessarily equivalent to $\frac{e}{k_n}(1-e^{-1})^{k_n+1}$ since
$k_n-\Floor{k_n}$ may oscillate between $0$ and $1$.
However, we do have $\Prob*{}{\tilde{T}_n > k_n} = \Theta({(1-e^{-1})^{k_n}}/{k_n})$,
where the Bachmann--Landau notation $u_n=\Theta(v_n)$ indicates that
there exist two positive constants
$c$ and $C$ such that $c\,v_n\leq u_n \leq Cv_n$ for $n$ large enough.
This approximation is sufficient for our purpose.

We may now prove Theorem \ref{thm:largestTree} using the first and second moment method that we already used for the largest degree.

\begin{proof}[Proof of Theorem \ref{thm:largestTree}]
  We apply Lemma \ref{lemmaMomentsMethod} to the exchangeable variables
  \[
    (X_n^{(1)},\ldots, X_n^{(n)})=(\tilde{T}_n^{(1)}-\alpha\log n, \ldots, \tilde{T}_n^{(n)}-\alpha\log n)\,, 
  \]
  with $m_n=\alpha \log\log n$ and $\beta=-1$.
  The first two points of the lemma are readily checked, since \eqref{eq:tail-tree} tells us that for
  \[
     \alpha = (1-\log(e-1))^{-1} = -(\log(1-e^{-1}))^{-1} 
  \]
  and any $\gamma>0$, we have for $k_n\defas\alpha(\log n-\gamma\log\log n)$
  \begin{align} \label{eqProbTtildegeqk}
    \Prob{\tilde{T}_n-\alpha\log n\geq -\gamma\alpha \log\log n} =
    \Prob{\tilde{T}_n\geq k_n} =
    \Theta\Big( \frac{(\log n)^{\gamma-1}}{n}\Big).
  \end{align}
  Thus, for all $\epsilon > 0$,
  \begin{mathlist}
    \item $n \Prob{\tilde{T}_n-\alpha\log n\geq (-1+\epsilon)\alpha \log\log n} \tendsto{} 0.$
    \item $n \Prob{\tilde{T}_n-\alpha\log n\geq (-1-\epsilon)\alpha \log\log n} \tendsto{} +\infty.$
  \end{mathlist}

  All that remains to check is the third point of the lemma.
  From now on, we fix $k_n=\alpha(\log n-(1+\epsilon)\log\log n)$ for some
  $\epsilon>0$, and for the sake of readability, we set
  $R_n \defas \Prob*{}{\tilde{T}_n\geq k_n}$.
  With this notation, given Remark \ref{rem:three-prime} we need to show
  \begin{equation} \label{eq:asymptotic-independence-largest-tree}
    \Prob*\big{\tilde{T}_n^{(1)}\geq k_n, \tilde{T}_n^{(2)}\geq k_n} \;\leq\; R_n^2+o(R_n^2).
  \end{equation}
  Since this is rather technical, we defer the complete proof to
  Lemma~\ref{lem:asymptotic-indep-largest-tree} in Appendix~\ref{app:technical}, and
  only outline the main ideas of the proof here.
  As in the study of the largest degree, we prove this by showing that the law
  of $\tilde{T}_n^{(2)}$ conditional on $\Set*{\tilde{T}_n^{(1)} \geq k_n}$ is
  close to its unconditional law.  We first prove that
  \[
    \Prob*\big{A_n,\,\tilde{T}_n^{(1)}\geq k_n, \tilde{T}_n^{(2)}\geq k_n} \;=\; o(R_n^2),
  \]
  where $A_n\defas\{\tilde{\Ti}_n^{(2)}\subset\tilde{\Ti}_n^{(1)}\}\sqcup\{\tilde{\Ti}_n^{(1)}\subset\tilde{\Ti}_n^{(2)}\}$
  is the event that one of the two vertices~$1$ and~$2$ is an ancestor of the other in the UA construction.
  We then show
  \[
    \Prob*\big{A_n^{\mathrm{c}},\,\tilde{T}_n^{(1)}\geq k_n, \tilde{T}_n^{(2)}\geq k_n}
    \;\leq\; R_n^2+o(R_n^2)\,, 
  \]
  where $A_n^{\mathrm{c}}$ denotes the complement of $A_n$.
  This is done by showing that, conditional on $\Set*{\tilde{T}_n^{(1)}=i}$, on
  the event $A_n^{\mathrm{c}}$ the process counting the number of vertices of
  the tree $\Ti_n^{(2)}$ in the UA construction behaves as a modified
  $\Upsilon_n$ process, which we essentially bound from above by $\Upsilon_{n-i}$.
  Therefore, $\tilde{T}_n^{(2)}$ can be compared with an independent variable with distribution $\tilde{T}_{n-i}$.
  Finally, we show that
  \[
    \sum_{i\geq k_n}\Prob{\tilde{T}_n^{(1)}=i}\Prob{\tilde{T}_{n-i}\geq k_n} \;\leq\;
    R_n^2+o(R_n^2),
  \]
  thereby proving \eqref{eq:asymptotic-independence-largest-tree} and concluding the proof of Theorem \ref{thm:largestTree}.
\end{proof}

\change{
\section{Concluding comments}
}

\subsection{Aldous's construction}
The UA construction described in Section \ref{secUAC} is reminiscent of
Aldous's construction \cite[Algorithm 2]{Ald90} of a uniform rooted
labeled tree as a function of $n$ uniform random variables on
$\{1,\dots,n\}$ -- an algorithm arguably simpler than the variant of
Joyal's bijection used in the proof of Theorem~\ref{thmConstructionUniformTrees}.
However, we could not find a simple way to couple both procedures so that
the forest obtained from the UA construction coincides with the one
obtained by removing decreasing edges in Aldous's uniform tree.

\subsection{A local limit}

The construction of $\MF{n}$ from a uniform random tree in 
Section~\ref{secUnifLabeledTrees} gives us a way to build an infinite forest as a
limiting object for the Moran forest, in a weak sense. Let us start by
describing the local weak limit of the uniform (rooted) random tree. Recall
that the local weak limit of a sequence of random graph $\mathcal{G}_n$ is a
(possibly infinite) random pointed graph $(\mathcal{G},u_{\mathcal{G}})$
such that for each finite radius $r\geq 1$, the $r$-neighborhood of a
uniformly chosen vertex $u_n\in \mathcal{G}_n$ converges in distribution
to the ball of radius $r$ around $u_{\mathcal{G}}$ in $\mathcal{G}$.
Consider the following infinite random tree:
\begin{itemize}
  \item Start from an infinite \emph{spine} of vertices
      $u_0,u_1,u_2,\dots$, with edges $(u_i\leftarrow u_{i+1})$ between
      subsequent vertices, directed toward the focal vertex $u_0$.
  \item Let independent Galton-Watson trees with Poisson($1$) offspring
      distribution start from each $u_i$, for $i\geq 0$, with edges
      directed from mothers to daughters.
\end{itemize}
The graph $\mathcal{T}_\infty$ described above is the local weak limit of the
random rooted uniform tree on $n$ labeled vertices \cite{Gri80}.
The root is informally placed at the end of the infinite spine.

In order to translate this result to the Moran forest, we need to remove
the decreasing edges of $\mathcal{T}_{\infty}$. To do so, we equip each
vertex $v$ with an independent uniform variable $V_v$ on $[0, 1]$, that
corresponds to the limiting renormalized label of $v$. The graph obtained 
by removing from $\mathcal{T}_\infty$ each edge $\vec{uv}$ such that 
$V_u > V_v$ can be understood as the local weak limit of the Moran
forest.

This construction can be used to derive some limiting results about the
local structure of the Moran forest. We can for instance recover
the limiting degree of a uniformly chosen vertex: it is clear that the
focal vertex $u_0$ in the construction above has degree 
\[
    D \;\overset{d}{=}\; \mathrm{Ber}(V_{u_0}) + \mathrm{Poisson}(1-V_{u_0}),
\]
as described in Proposition~\ref{propLimitLawDegree}. However, global
results such as the size of the largest tree or the largest degree
cannot be easily derived from this local weak limit.

\begin{remark}~
    \begin{mathlist}
    \item By definition, the local weak limit should be a.s.\ connected. Only
    the random tree that contains $u_0$ corresponds to the actual local
    weak limit of the Moran forest. It seems difficult to give meaning 
    to the other trees that are obtained in this procedure. They are
    connected to the tree containing the focal vertex through the labels
    $(V_v)$. This indicates that the tree that is adjacent to the focal
    tree is not simply an independent ``local limit tree'' for another 
    uniformly chosen vertex.
    
    \item Note that the local weak limit can be obtained in a
    simpler, more direct way. Indeed, Theorem~\ref{thmAsymptoticTreeSize}
    gives us the limiting size of the tree containing a uniformly chosen
    vertex, and we know from the UA construction that, conditional on
    the number of trees and their sizes, trees in the Moran forest are
    uniform attachment trees.  \qedhere
    \end{mathlist}
\end{remark}

\subsection{Possible extension}

An important property of the Moran model is that it is an exchangeable
population model: its distribution is invariant under re-labeling of the
vertices. General exchangeable population models are known as
\emph{Cannings models}, and just as for the Moran model it is possible
to associate a forest-valued process to any Cannings model.

A Cannings model is defined from an exchangeable vector $(\xi(1),
\dots, \xi(n))$ of non-negative integers verifying 
$\xi(1) + \dots + \xi(n) = n$. This vector encodes the offspring 
distribution of a population labeled by $\Set{1, \dots, n}$.
If $\xi(v) = 0$, we say that individual $v$ is dead.
Otherwise, it has $\xi(v) - 1$ children. It is clear that the number of
dead individuals is equal to the total number of children in the
population. We can thus assign a mother to each dead individual, in such a
way that the number of children of each live individual is $\xi(v) - 1$. 

Starting from any directed graph, we can now define a
transition as follows:
\begin{enumerate}
    \item Draw a vector distributed as $(\xi(1), \dots, \xi(n))$.
    \item Disconnect each dead vertex from all its neighbors.
    \item Assign a mother to each dead vertex, uniformly among all
        possibilities. For each dead vertex $v$, draw an edge from its
        mother to $v$.
\end{enumerate}
This defines a Markov chain on the set of rooted forests.
It is not hard to see that, if $(\xi(1), \dots, \xi(n))$ is the uniform
permutation of the vector $(2, 0, 1, \dots, 1)$, we recover the Markov
chain leading to the Moran forest described in Section~\ref{secModel}.

\enlargethispage{2ex}

Studying these general exchangeable forest processes is not the aim of
the current work. In particular, the techniques we used here rely
heavily on the construction of the Moran forest described in
Section~\ref{secUAC}, which cannot be easily adapted to more general
exchangeable forest processes.

\section*{Acknowledgments}

We thank Amaury Lambert for initiating the discussion that led us to consider
this model and for comments on the first version of this manuscript,
\change{and Justin Salez for interesting discussions on the link between
the uniform random rooted tree, its local limit and the Moran forest.
We are also grateful to the editor for pointing out the similarity
between the UA construction and Aldous's algorithm, and to two
anonymous referees for helpful comments, including the possibility of
adapting the definition of the Moran forest to Cannings
models.

This work was for the most part done while JJD worked in CIRB and LPSM.}

\phantomsection
\addcontentsline{toc}{section}{References}
\bibliographystyle{abbrvnat}
\bibliography{biblio} 

\pagebreak

\appendix

\section{Appendix}

\subsection{Proof of point (ii) of Proposition~\ref{propTailDegree}}
\label{appEndProofTailDegree}

We want to prove that, for all $K_n = o(\sqrt{n})$, there exists
$\epsilon_n = o(1)$ such that, for all $k \leq K_n$,
\[
  \Abs{\Prob{D_n \geq k} - \Prob{D \geq k}}
  \;\leq\; \epsilon_n\, \Prob{D \geq k} \,.
\]
Doing this directly from the expressions of $D_n$ and $D$
involves unappealing calculations.
To somewhat circumvent this, we make use of the simple expressions of the
probability generating functions $G_{D_n}$ and $G_{D}$. For this, let
\[
  \Delta_n(z) \;\defas\;
  \sum_{i \geq 0} \big(\Prob{D \geq i} - \Prob{D_n \geq i}\big) z^i, 
\]
so that the $k$-th derivative of $\Delta_n$ evaluated at $z = 0$ is
\[
  \Delta_n^{(k)}(0) \;=\; k!\, \big(\Prob{D \geq k} - \Prob{D_n \geq k}\big).
\]
Since $\Prob{D \geq k} \geq \frac{2/e}{(k + 1)!}$, we have to show that
for any given sequence $K_n = o(\sqrt{n})$,\linebreak
\[
  \Delta_n^{(k)}(0) \;=\; \frac{\epsilon_n}{k + 1}
\]
for some $\epsilon_n \to 0$ and all $k \leq K_n$.
Now, since for any non-negative integer-valued
random variable $X$,
\[
  \sum_{i \geq 0} \Prob{X \geq i}\,z^i \;=\; \frac{z\,\Expec{z^X} - 1}{z - 1}\,, 
\]
we can express $\Delta_n$ in terms of the generating functions of $D$ and
$D_n$, that is,
\[
  \Delta_n(z) \;=\; \mleft(1 + \frac{1}{z - 1}\mright)
  \big(G_D(z) - G_{D_n}(z)\big)\,.
\]
Moreover, we know from Proposition~\ref{propLimitLawDegree} that
\[
  G_D(z) =\; 2\,\frac{e^{z - 1} - 1}{z - 1} - 1 \;=\;
           2\sum_{i \geq 0} \frac{(z - 1)^i}{(i + 1)!}  - 1
\]
and from Proposition~\ref{propLawDegree} that
\begin{align*}
  G_{D_n}(z)
  &=\; 2 \mleft(1 - \frac{1}{n}\mright)
    \frac{\mleft(1 + \frac{z - 1}{n - 1}\mright)^n - 1}{z - 1} \;-\; 1 \\
  &=\; 2 \mleft(1 - \frac{1}{n}\mright)
    \sum_{i = 0}^{n - 1} \binom{n}{i + 1} \mleft(\frac{1}{n - 1}\mright)^{i + 1}
    (z - 1)^i \;-\; 1 \\
  &=\; 2 \sum_{i = 0}^{n - 1} \mleft(\prod_{\ell = 1}^i\frac{n - \ell}{n - 1}\mright)
  \frac{(z - 1)^i}{(i + 1)!} \;-\; 1\,, 
\end{align*}
where the empty product is~1. Therefore,
\[
  G_D(z) - G_{D_n}(z) =   
  \sum_{i \geq 0} A(n, i) \frac{(z - 1)^i}{(i + 1)!} \,, 
\]
where
\[
  A(n, i) = 2 \mleft[1 - \mleft(\prod_{\ell = 1}^i\frac{n - \ell}{n - 1}\mright)
  \Indic{i \leq n - 1}\mright]\,.
\]
Using that $A(n, 0) = A(n, 1) = 0$ and rearranging a bit, we obtain the
following expansion of $\Delta_n$ at $z = 1$:
\[
  \Delta_n(z) \;=\;
  \sum_{i \geq 1} \mleft(A(n, i) + \frac{A(n, i + 1)}{i + 2}\mright) 
                  \frac{(z - 1)^i}{(i + 1)!}\,, 
\]
from which we get
\[
  \Delta_n^{\!(k)}(0) \;=\;
  \sum_{i \geq k} \mleft(A(n, i) + \frac{A(n, i + 1)}{i + 2}\mright) 
                  \frac{(-1)^{i - k}}{(i - k)!\,(i + 1)} \,.
\]
Now, pick any $J_n = o(\sqrt{n})$ such that $K_n = o(J_n)$. For all $i < J_n$,
\[
  \Abs{A(n, i) + \frac{A(n, i + 1)}{i + 2}} \;\leq\;
  4 \mleft(1 - \prod_{\ell = 1}^{J_n}\frac{n - \ell}{n - 1}\mright) \;=\;
  \epsilon_n\,,
\]
with $\epsilon_n \to 0$, since
\[
  \prod_{\ell = 1}^{J_n}\frac{n - \ell}{n - 1} \;\geq\;
  \mleft(\frac{n - J_n}{n - 1}\mright)^{J_n} =\;
  \exp\mleft(-\tfrac{J_n^2}{n} + o\mleft(\tfrac{J_n^2}{n}\mright)\mright)\,.
\]

For $i \geq J_n$, we have
\[
  \Abs{A(n, i) + \frac{A(n, i + 1)}{i + 2}} \;\leq\; 4 \,.
\]
Combining these two upper bounds, we get
\begin{align*}
  \Abs{\Delta_n^{\!(k)}(0)} &\;\leq\;
  \sum_{i = k}^{J_n - 1} \frac{\epsilon_n}{(i - k)!\,(i + 1)} \;+\;
  \sum_{i \geq J_n} \frac{4}{(i - k)!\,(i + 1)} \\
  &\;\leq\;
  \frac{\epsilon_n\,C_1}{(k + 1)} \;+\; \frac{C_2}{(J_n + 1)}\,.
\end{align*}
Finally, since $K_n = o(J_n)$, we have for all $k \leq K_n$,
\[
  \frac{1}{J_n + 1} \;\leq\;
  \frac{1}{k + 1} \cdot
  \frac{K_n + 1}{J_n + 1}\,, 
\]
with $(K_n + 1)/(J_n + 1) = o(1)$. This concludes the proof.

Note that although we have been quite crude in that we have used the
triangle inequality on an alternating series, a more careful analysis would 
show that the $o(\sqrt{n})$ requirement on $K_n$ is in fact optimal.

\subsection{Technical lemmas used in the proof of Theorem~\ref{thm:largestTree}} \label{app:technical}

\begin{lemma} \label{lem:integral-equivalent}
 For any sequence $k_n\to\infty$ and any sequence of measurable maps $f_n:[0,1]\to\R$ such that for all $x\in [0,1]$, $(1-e^{-x} +f_n(x))\geq 0$ and $\sup_x\Abs{f_n(x)}=o(1/k_n)$, we have
  \[
  \int_{0}^{1} \big(1-e^{-x} + f_n(x)\big)^{k_n} \,dx \,\sim\, \frac{e}{k_n}(1-e^{-1})^{k_n+1}.
  \]
\end{lemma}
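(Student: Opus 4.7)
The integrand $(1-e^{-x}+f_n(x))^{k_n}$ is maximal at the right endpoint $x=1$, so the plan is to localize the integral there via the substitution $x = 1 - s/k_n$ and then apply dominated convergence. After this change of variables, the integral becomes
\[
  \frac{1}{k_n} \int_0^{k_n} \bigl(1 - e^{-(1-s/k_n)} + f_n(1-s/k_n)\bigr)^{k_n}\,ds,
\]
and I would factor out $(1-e^{-1})^{k_n}$ using the identity $1-e^{-(1-u)} = (1-e^{-1}) - e^{-1}(e^u-1)$. Writing $e^{s/k_n}-1 = s/k_n + O(s^2/k_n^2)$ and setting $\alpha \defas e^{-1}/(1-e^{-1}) = 1/(e-1)$, the pointwise expansion
\[
  \frac{1-e^{-(1-s/k_n)} + f_n(1-s/k_n)}{1-e^{-1}} \;=\; 1 - \frac{\alpha s}{k_n} + o\mleft(\frac{1}{k_n}\mright)
\]
holds for each fixed $s \geq 0$, so that the $k_n$-th power converges pointwise to $e^{-\alpha s}$.

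The main obstacle will be producing an integrable dominating function valid on the \emph{entire} range $s \in [0, k_n]$, not merely for bounded $s$. For this I would exploit the concavity of $x \mapsto 1-e^{-x}$: its tangent at $x=1$ gives $1 - e^{-x} \leq (1-e^{-1}) + e^{-1}(x-1)$ for all $x$, and therefore
\[
  \frac{1-e^{-(1-u)}}{1-e^{-1}} \;\leq\; 1 - \alpha u \quad \text{for } u \in [0,1].
\]
Combined with the hypothesis $\sup_x |f_n(x)| = o(1/k_n)$, this bounds the renormalized bracket above by $1 - \alpha s/k_n + o(1/k_n)$ uniformly in $s \in [0, k_n]$. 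Since $\alpha = 1/(e-1) < 1$ this bound stays nonnegative, so applying $(1+y)^{k_n} \leq e^{k_n y}$ yields the uniform majorant $e^{o(1)} e^{-\alpha s} \leq 2 e^{-\alpha s}$ for $n$ large, which is integrable on $[0,\infty)$.

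Dominated convergence then gives
\[
  \int_0^{k_n} \bigl(1-e^{-(1-s/k_n)} + f_n(1-s/k_n)\bigr)^{k_n} \frac{ds}{k_n}
  \;\sim\; \frac{(1-e^{-1})^{k_n}}{k_n} \int_0^{\infty} e^{-\alpha s}\,ds
  \;=\; \frac{(e-1)(1-e^{-1})^{k_n}}{k_n},
\]
and the elementary identity $e(1-e^{-1}) = e - 1$ rewrites this as the claimed equivalent $\tfrac{e}{k_n}(1-e^{-1})^{k_n+1}$. The only delicate step is the uniform domination above: because $f_n$ is merely assumed measurable, smoothness-based techniques like Laplace's method cannot be invoked directly, and the global concavity inequality is what makes the argument go through all the way to $s = k_n$.
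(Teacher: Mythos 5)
Your proof is correct and follows the same overall strategy as the paper: localize the integral near the right endpoint $x=1$, rescale so the integration variable ranges over $[0,k_n]$, and finish with dominated convergence. The only genuine difference is the choice of substitution. You use the affine change $x = 1-s/k_n$ and must then invoke the concavity of $x\mapsto 1-e^{-x}$ (the tangent-line bound at $x=1$) to produce a dominating function valid on all of $[0,k_n]$. The paper instead uses the nonlinear substitution $y = k_n(e^{1-x}-1)/(e-1)$, which makes the normalized bracket exactly $1-y/k_n+g_n(y)$, so the domination $\bigl(1-\tfrac{y}{k_n}+g_n(y)\bigr)^{k_n}\le e^{-y+o(1)}$ follows directly from $\log(1+t)\le t$, with no geometric input. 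Both routes reach the same pointwise limit integral, $\int_0^\infty (e-1)e^{-y}\,dy$ in the paper and $\int_0^\infty e^{-s/(e-1)}\,ds$ in yours, each equal to $e-1$, and then the elementary identity $e(1-e^{-1})=e-1$ gives the stated constant. The paper's substitution is slightly slicker; your version is arguably more transparent about why the concavity of $1-e^{-x}$ is what makes the global domination work.
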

\begin{proof}
  Let us compute
  \begin{align*}
  \int_{0}^{1} \frac{(1-e^{-x} + f_n(x))^{k_n}}{(1-e^{-1})^{k_n}} k_n\,dx &= \int_{0}^{1} \left (1 - \frac{e^{1-x}-1}{e-1}+\frac{e}{e-1}f_n(x)\right )^{k_n} k_n\, dx\\
  &= \int_{0}^{k_n} \left (1 - \frac{y}{k_n}+g_n(y)\right )^{k_n} \frac{e-1}{1+(e-1)\frac{y}{k_n}}\, dy,
  \end{align*}
  where we used the change of variable $y=k_n(e^{1-x}-1)(e-1)^{-1}$, and defined the map $g_n$ as
  \[
    g_n(y) = \frac{e}{e-1}f_n\left (1-\log\big(1+\frac{y}{k_n}(e-1)\big)\right ).
  \]
  Since $(1 - \frac{y}{k_n}+g_n(y))^{k_n}\leq \exp(-y+\frac{e}{e-1}k_n\sup_x f_n(x))$, it follows from dominated convergence that 
  \[
  \int_{0}^{1} \frac{(1-e^{-x} + f_n(x))^{k_n}}{(1-e^{-1})^{k_n}} k_n\,dx \tendsto{n\to\infty} \int_0^{\infty}e^{-y}(e-1)\, dy = e-1,
  \]
  concluding the proof.
\end{proof}

\begin{lemma} \label{lem:asymptotic-indep-largest-tree}
  Let $\tilde{T}^{(v)}_n$ denote the size of the subtree descending from $v$ in
  the UA construction of $\MF{n}$.
  Then, for $\alpha =-1/\log(1-e^{-1})$ and any $\epsilon > 0$,
  letting $k_n = \alpha(\log n -(1+\epsilon)\log\log n)$ and
  $R_n = \Prob{\tilde{T}_n\geq k_n}$,
  \[
    \Prob*\big{\tilde{T}_n^{(1)}\geq k_n, \tilde{T}_n^{(2)}\geq k_n} \;\leq\; R_n^2+o(R_n^2).
  \]
\end{lemma}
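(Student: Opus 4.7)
My plan closely follows the outline given in the proof of Theorem~\ref{thm:largestTree}: decompose $\{\tilde{T}_n^{(1)} \geq k_n,\, \tilde{T}_n^{(2)} \geq k_n\}$ according to whether one of vertices~$1$ and~$2$ is an ancestor of the other in the UA construction (event $A_n$) or not ($A_n^{\mathrm{c}}$), and bound the two contributions separately.

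For the $A_n$ contribution, by symmetry it suffices to bound the probability that vertex~$1$ is an ancestor of vertex~$2$ with $\tilde{T}_n^{(2)}\geq k_n$. Let $d_n^{(2)}$ denote the depth of vertex~$2$ in the UA tree. By exchangeability of the $n-1$ labels different from~$2$, this probability equals $\Expec{d_n^{(2)}/(n-1)\cdot \Indic{\tilde{T}_n^{(2)}\geq k_n}}$. The crucial observation is that, conditional on~$B_2$, $d_n^{(2)}$ is determined by the first $B_2$ steps of the UA construction while $\tilde{T}_n^{(2)}$ depends only on the remaining ones, so they are conditionally independent. A short induction on the expected depth of the $k$-th added vertex yields $\Expec{d_n^{(2)}\mid B_2=b} = (1+\frac{1}{n-1})^{b-1} - 1 \leq e-1$ uniformly in~$b$, so factorizing the expectation produces a bound of order $R_n/n$, which is $o(R_n^2)$ since $R_n = \Theta((\log n)^\epsilon/n)$.

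For the $A_n^{\mathrm{c}}$ contribution, I would condition on the set $\tilde{\Ti}_n^{(1)}$ of size $i$ and analyze the UA dynamics restricted to the $n-i$ complementary vertices. A direct computation shows that, conditional on not lying in $\tilde{\Ti}_n^{(1)}$, the $r$-th non-$\tilde{\Ti}_n^{(1)}$ vertex (born at step~$\ell_r$) attaches to a uniformly chosen previous non-$\tilde{\Ti}_n^{(1)}$ vertex with probability $(r-1)/(n-1-(\ell_r-r)) \leq (r-1)/(n-i-1)$; the latter is precisely the attachment probability at step~$r$ of a UA construction on $n-i$ vertices. A pathwise coupling then dominates $\tilde{T}_n^{(2)}$ by the descending subtree size of the corresponding vertex in an independent UA forest on $n-i$ vertices, yielding after summing over the conditioning $\Prob{A_n^{\mathrm{c}},\, \tilde{T}_n^{(1)}=i,\, \tilde{T}_n^{(2)}\geq k_n} \leq \Prob{\tilde{T}_n = i}\,\Prob{\tilde{T}_{n-i}\geq k_n}$. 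To conclude, I would show $\sum_{i\geq k_n}\Prob{\tilde{T}_n = i}\,\Prob{\tilde{T}_{n-i}\geq k_n} \leq R_n^2(1+o(1))$ by splitting at $K_n = 3k_n$: applying~\eqref{eq:tail-tree} with parameter $n-i$ gives $\Prob{\tilde{T}_{n-i}\geq k_n}\sim R_n$ uniformly for $i\leq K_n$, so the main part contributes $R_n^2(1+o(1))$; for $i>K_n$, the crude bound $\Prob{\tilde{T}_{n-i}\geq k_n}\leq 1$ combined with $\Prob{\tilde{T}_n > K_n} = \Theta((\log n)^{2+3\epsilon}/n^3) = o(R_n^2)$ handles the tail.

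The main obstacle is the stochastic domination in the $A_n^{\mathrm{c}}$ case: while the per-step attachment probability comparison is elementary, correctly handling the conditioning on $\tilde{\Ti}_n^{(1)}$ and on vertex~$2$'s rank among the complementary vertices, so that the dominating random variable (summed appropriately) is genuinely distributed as $\tilde{T}_{n-i}$, requires some care.
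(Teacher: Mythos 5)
Your proposal follows the same high-level decomposition as the paper (split on whether one of vertices~$1,2$ is an ancestor of the other in the UA construction), and the final integration $\sum_i \Prob{\tilde T_n = i}\,\Prob{\tilde T_{n-i}\geq k_n}\leq R_n^2(1+o(1))$ is handled in essentially the same way, modulo a different choice of cutoff ($K_n=3k_n$ versus $K_n=\alpha(\log n)^{1+\epsilon/2}$; both work). However, the two sub-bounds are obtained by genuinely different arguments. For the $A_n$ term, the paper conditions on the height set of $\tilde{\Ti}^{(1)}_n$ to get the probability $\frac{i-k_n}{n-1}$ that vertex~$2$ sits in the right part of $\tilde{\Ti}^{(1)}_n$, and then controls $\frac{1}{n-1}\sum_{i>k_n}\Prob{\tilde T_n\geq i}$ by a three-way tail split; you instead condition on $B_2$, exploit that the depth $d_n^{(2)}$ and the subtree size $\tilde T_n^{(2)}$ are conditionally independent (they depend on disjoint blocks of the $U_n(\ell)$'s), and use $\Expec{d_n^{(2)}\mid B_2=b}=(1+\tfrac{1}{n-1})^{b-1}-1\leq e-1$ to get the cleaner bound $\tfrac{2(e-1)}{n-1}R_n=o(R_n^2)$ with almost no case analysis. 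For the $A_n^{\mathrm c}$ term, the paper indexes the growth of $\tilde{\Ti}^{(2)}_n$ by the absolute height $\ell$, so the comparison with $\Upsilon_{n-i}$ only holds for $\ell<n-i$ and the range $\ell\geq n-i$ must be handled separately by a Markov bound, producing the extra term $\tfrac{ei}{k_n(n-1)}$; you instead re-index by the rank of vertex~$2$ among the $n-i$ vertices not in $\tilde{\Ti}^{(1)}_n$, which makes the attachment probability $\leq \tfrac{j}{n-i-1}$ at every step and gives a direct coupling with the descending subtree in a UA construction on $n-i$ vertices, eliminating the correction term entirely. Your bound $\Prob{A_n^{\mathrm c},\,\tilde T_n^{(1)}=i,\,\tilde T_n^{(2)}\geq k_n}\leq\Prob{\tilde T_n=i}\Prob{\tilde T_{n-i}\geq k_n}$ is therefore tighter than what the paper uses, and the subsequent summation then goes through with a simpler cutoff. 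The one place that still needs to be spelled out carefully is the pathwise coupling itself (you flag this correctly); but the monotone comparison of single-step attachment probabilities, together with an induction on the subtree size, suffices, exactly in the spirit of the paper's coupling between $\widetilde\Upsilon_\ell$ and $\Upsilon_{n-i}$.
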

\begin{proof}
  Let us denote by
  $A_n\defas\{\tilde{\Ti}_n^{(2)}\subset\tilde{\Ti}_n^{(1)}\}\sqcup\{\tilde{\Ti}_n^{(1)}\subset\tilde{\Ti}_n^{(2)}\}$
  the event that one of the vertices $1$ and $2$ is an ancestor of the other.
  We start by showing that
  \begin{equation} \label{eq:proba-conflit}
  \Prob*\big{A_n,\,\tilde{T}_n^{(1)}\geq k_n, \tilde{T}_n^{(2)}\geq k_n} = o(R_n^2).
  \end{equation}
  By exchangeability, we have
  \begin{align*}
  &\Prob*\big{A_n,\,\tilde{T}_n^{(1)}\geq k_n, \tilde{T}_n^{(2)}\geq k_n}\\
  &\qquad= 2\,\Prob*\big{\tilde{\Ti}_n^{(2)}\subset\tilde{\Ti}_n^{(1)},\,\tilde{T}_n^{(1)}\geq k_n, \tilde{T}_n^{(2)}\geq k_n}\\
  &\qquad=\sum_{i\geq k_n}\Prob*\big{\tilde{\Ti}_n^{(2)}\subset\tilde{\Ti}_n^{(1)},\, \tilde{T}_n^{(2)}\geq k_n\given\tilde{T}_n^{(1)}=i}\,\Prob{\tilde{T}_n=i}.
  \end{align*}
  Let us call the \emph{height} of a vertex the number of steps after it was
  added in the UA construction.
  Conditional on $\Set*{\tilde{T}_n^{(1)} = i}$ and on the heights of the vertices of
  $\tilde{\Ti}_n^{(1)}$ being $\ell_1>\ldots>\ell_i$,
  the height $L_2$ of vertex~$2$ is uniformly distributed
  on $\{0,\ldots n-1\}\setminus\{\ell_1\}$. Moreover, in order to have
  \[
    \{\tilde{\Ti}_n^{(2)}\subset\tilde{\Ti}_n^{(1)},\,\tilde{T}_n^{(2)} \geq k_n\}\,, 
  \]
  the height of vertex~$2$ must belong to $\{\ell_2, \ldots, \ell_{i-(k_n-1)}\}$,
  which happens with probability $\frac{i-k_n}{n-1}$.
  Therefore,
  \begin{align*}
  &\Prob*\big{A_n,\,\tilde{T}_n^{(1)}\geq k_n, \tilde{T}_n^{(2)}\geq k_n}\\
  &\qquad\leq\sum_{i\geq k_n}\Prob{\tilde{T}_n=i}\frac{i-k_n}{n-1}\\
  &\qquad=\frac{1}{n-1}\sum_{i > k_n}\Prob{\tilde{T}_n\geq i}.
  \end{align*}
  To show that this is small enough, we let $K_n \defas k_n+\alpha(\log n)^{\delta}$ with $0<\delta < \min(1,\epsilon)$ and $K_n'\defas 2\alpha\log n$, and crudely bound
  \begin{align*}
  \sum_{i > k_n}\Prob{\tilde{T}_n\geq i}&\leq (K_n-k_n)\Prob{\tilde{T}_n\geq k_n} + K'_n\,\Prob{\tilde{T}_n\geq K_n} + n\,\Prob{\tilde{T}_n\geq K'_n}.
  \end{align*}
  Now let us show that these three terms are negligible compared to $n R_n^2$.
  Recalling from~\eqref{eqProbTtildegeqk} that
  $R_n =\Theta\big(\frac{(\log n)^{\epsilon}}{n}\big)$, we have
  $n R_n^2=\Theta((\log n)^{2\epsilon}/n)$ and
  therefore
  \begin{itemize}
    \item $\displaystyle (K_n-k_n)\Prob{\tilde{T}_n\geq k_n} \sim \alpha(\log n)^{\delta}R_n =\Theta\Big(\frac{(\log n)^{\delta+\epsilon}}{n}\Big) = o(nR_n^2)$.
    \item $\displaystyle K'_n\,\Prob{\tilde{T}_n\geq K_n} =\Theta\big( \log n\, R_ne^{-(\log n)^{\delta}}\big) = o(R_n) = o(nR_n^2)$.
    \item $\displaystyle  n\,\Prob{\tilde{T}_n\geq K'_n} =\Theta \Big(n \frac{n^{-2}}{\log n}\Big) = o(1/n) = o(nR_n^2)$.
  \end{itemize}
  As a result, \eqref{eq:proba-conflit} is proven and it remains to show that
  \[
    \Prob*\big{A_n^{\mathrm{c}},\,\tilde{T}_n^{(1)}\geq k_n, \tilde{T}_n^{(2)}\geq k_n} \leq R_n^{2}+o(R_n^{2}),
  \]
  where $A_n^{\mathrm{c}}$ denotes the complement of $A_n$.
  We now fix $n\geq 1$, $i\geq k_n$, and a finite sequence $n-1\geq \ell_1>\ldots>\ell_i\geq 0$.
  Let us write $B$ for the event that $\tilde{\Ti}_n^{(1)}$ contains exactly the vertices with heights $\ell_1>\ldots>\ell_i$.
  Conditional on $B$, let us examine the distribution of $\tilde{\Ti}_n^{(2)}$.
  Recall that the height $L_2$ of vertex $2$ is uniformly distributed on $\{0,\ldots n-1\}\setminus\{\ell_1\}$.
  In the UA construction, define $\mathbb{T}$ as the tree obtained by starting
  from a root arrived at height $L_2$ and allowing the attachment of
  a vertex with height~$\ell$ to $\mathbb{T}$ only if $\ell\notin\{\ell_1,\ldots \ell_i\}$.
  Then, on the event $A_n^{\mathrm{c}}$, this tree must coincide with $\Ti_n^{(2)}$, and so
  \[
  \Prob*\big{A_n^{\mathrm{c}},\,\tilde{T}_n^{(2)}\geq k_n\given B} \;=\;
  \Prob*\big{A_n^{\mathrm{c}},\,\Abs{\mathbb{T}}\geq k_n\given B}.
  \]
  From the UA construction, for any $\ell\notin\Set{\ell_1,\ldots,\ell_i}$, conditional on $B\cap\Set*{L_2=\ell}$, we can
  describe~$\Abs{\mathbb{T}}$ using the process
  $(\widetilde{\Upsilon}_\ell(m),\,0\leq m\leq \ell)$ defined by
  \begin{itemize}
    \item $\widetilde{\Upsilon}_\ell(0) = 1$.
    \item For all $0< m \leq \ell$, 
      $\widetilde{\Upsilon}_\ell(m) - \widetilde{\Upsilon}_\ell(m - 1) \in \Set{0, 1}$
      and, conditional on \newline $\Set*{\widetilde{\Upsilon}_\ell(m-1)=j}$,
      $\widetilde{\Upsilon}_\ell(m) = j+1$ with probability
    \[
    \begin{cases}
    \dfrac{j}{n-1-J_m} &\text{ if } \ell-m \notin \{\ell_1,\ldots,\ell_i\}\\
    0 &\text{ if } \ell-m\in \{\ell_1,\ldots,\ell_i\},
    \end{cases}
    \]
    where
    $J_m=\Abs{\{\ell_1,\ldots,\ell_i\}\cap\{\ell-m,\ldots,n\}}$
    is the number of vertices of $\tilde{T}_n^{(1)}$ with height greater
    than $\ell-m$ in the UA construction.
  \end{itemize}
  With this definition, for any $\ell\notin\Set{\ell_1,\ldots,\ell_i}$,
  conditional on $B\cap\{L_2=\ell\}$, we have by construction
  \smash{$\Abs{\mathbb{T}}\overset{d}{=}\tilde{\Upsilon}_\ell(\ell)$}.
  Now, note that the probability of increasing is always bounded by ${j/(n-1-i)}$.
  Therefore, $\widetilde{\Upsilon}_\ell$ can be coupled with $\Upsilon_{n-i}$
  in such a way that, for all $0\leq m\leq \ell < n-i$,
  \[
    \widetilde{\Upsilon}_\ell(m)\leq\Upsilon_{n-i}(m).
  \]
  For $\ell\geq n-i$, we use the crude
  bound $\Prob*{}{\tilde{\Upsilon}_\ell(\ell)\geq k_n} \leq
  \Expec*{}{\tilde{\Upsilon}_\ell(\ell)}/k_n$.
  Using the same reasoning as in Lemma \ref{lemmaDiscreteYuleExpect}, we get
  \[
   \Expec*{}{\tilde{\Upsilon}_\ell(\ell)}\leq (1+\frac{1}{n-i-1})^{n-i-1} \leq e\,.
  \]
  We thus have
  \begin{align}
    \Prob*\big{A_n^{\mathrm{c}},\,\Abs{\mathbb{T}}\geq k_n\given B} \;&\leq\; \Prob*\big{L_2\notin\Set{\ell_1,\ldots,\ell_i},\,\Abs{\mathbb{T}}\geq k_n\given B} \notag\\
    &=\;\frac{1}{n-1} \;\sum_{\mathclap{\substack{\ell=0\\ \ell\notin\Set{\ell_1,\ldots\ell_i}}}}^{n-1}
    \;\Prob*\big{\tilde{\Upsilon}_\ell(\ell)\geq k_n}, \notag\\
    &\leq\; \frac{ei}{k_n(n-1)} + \frac{1}{n-1}\sum_{\ell=0}^{n-i-1} \Prob{\Upsilon_{n-i}(\ell)\geq k_n} \label{eq:increasing-1}\\[1ex]
  &=\; \frac{ei}{k_n(n-1)} + \frac{n-i}{n-1}\Prob{\tilde{T}_{n-i}\geq k_n}. \label{eq:increasing-2}
  \end{align}
  Since this bound depends on the set $\{\ell_1,\ldots,\ell_i\}$ only via its cardinality $i$, one can integrate with respect to the distribution of $\Ti_n^{(1)}$ to get
  \[
  \Prob*\big{A^c_n,\,\Abs{\mathbb{T}}\geq k_n\given \tilde{T}_n^{(1)}=i} \;\leq\;
  \frac{ei}{k_n(n-1)} + \frac{n-i}{n-1}\Prob{\tilde{T}_{n-i}\geq k_n}.
  \]
  Finally, because $\Upsilon_{n-(i+1)}(\ell) \overset{d}{\geq} \Upsilon_{n-i}(\ell)$,
  the expression~\eqref{eq:increasing-1} -- and
  therefore~\eqref{eq:increasing-2} -- is nondecreasing in $i$, and we have
  \begin{align}
  &\Prob*\big{A_n^{\mathrm{c}},\,\tilde{T}_n^{(1)}\geq k_n, \tilde{T}_n^{(2)}\geq k_n} \notag\\
  &\qquad \leq \sum_{i\geq k_n}\Prob{\tilde{T}_n= i}\left(\frac{ei}{k_n(n-1)} + \frac{n-i}{n-1}\Prob{\tilde{T}_{n-i}\geq k_n}\right) \notag\\
  &\qquad  \leq \sum_{i=k_n}^{K_n}\Prob{\tilde{T}_n= i}\left(\frac{e\,K_n}{k_n(n-1)} + \frac{n-K_n}{n-1}\Prob{\tilde{T}_{n-K_n}\geq k_n}\right) \label{eq:bound-moche-1}\\
  &\qquad  \quad + \sum_{i\geq K_n}\Prob{\tilde{T}_n= i}\left(\frac{ei}{k_n(n-1)} + \frac{n-i}{n-1}\Prob{\tilde{T}_{n-i}\geq k_n}\right), \label{eq:bound-moche-2}
  \end{align}
  for any sequence $K_n\geq k_n$.  
  Letting $K_n \defas\alpha(\log n)^{1+\epsilon/2}$, we then show that
  \eqref{eq:bound-moche-1} is asymptotically no greater than $R_n^{2}$, and
  that \eqref{eq:bound-moche-2} is negligible compared to $R_n^{2}$.
  Indeed, \eqref{eq:bound-moche-1} is bounded from above by
  \[
  R_n\left(\frac{e\,K_n}{k_n(n-1)} + \frac{n-K_n}{n-1}\Prob{\tilde{T}_{n-K_n}\geq k_n}\right).
  \]
  Now note that $\frac{e\,K_n}{k_n(n-1)}=O(\frac{(\log n)^{\epsilon/2}}{n})=o(R_n)$,
  and that since $n-K_n \sim n$, we have
  $k_n=o(\sqrt{n-K_n})$. Therefore, using \eqref{eq:tail-tree} we get
  $\Prob{\tilde{T}_{n-K_n}\geq k_n} \sim R_n$.
  Finally, up to a multiplicative constant, \eqref{eq:bound-moche-2} is bounded from above by
  \[
  \Prob{\tilde{T}_{n}\geq K_n} =\Theta\Big(\frac{n^{-(\log n)^{\epsilon/2}}}{K_n}\Big) = o(n^{-2})=o(R_n^{2}).
  \]
  Putting everything together, we have proved that
  \[
  \Prob*\big{\tilde{T}_n^{(1)}\geq k_n, \tilde{T}_n^{(2)}\geq k_n} \;\leq\;
  R_n^{2}+o(R_n^{2}),
  \]
  which concludes the proof.
\end{proof}

\end{document}